\numberwithin{equation}{section}
\newtheorem{theoremcounter}{theoremcounter}[section]
\theoremstyle{plain}
\newtheorem{corollary}[theoremcounter]{Corollary}
\newtheorem{lemma}[theoremcounter]{Lemma}
\newtheorem{proposition}[theoremcounter]{Proposition}
\newtheorem{theorem}[theoremcounter]{Theorem}
\newtheorem{introtheorem}{Theorem}
\theoremstyle{definition}
\newtheorem{definition}[theoremcounter]{Definition}
\theoremstyle{remark}
\newtheorem{example}[theoremcounter]{Example}
\newtheorem{notation}[theoremcounter]{Notation}
\newtheorem{remark}[theoremcounter]{Remark}
\newtheorem*{introquestion}{Question}
\newcommand{\cB}{\ensuremath{\mathcal{B}}}
\newcommand{\cN}{\ensuremath{\mathcal{N}}}
\newcommand{\cP}{\ensuremath{\mathcal{P}}}
\newcommand{\cU}{\ensuremath{\mathcal{U}}}
\newcommand{\bC}{\ensuremath{\mathbb{C}}}
\newcommand{\bT}{\ensuremath{\mathbb{T}}}
\newcommand{\rD}{\ensuremath{\mathrm{D}}}
\newcommand{\rE}{\ensuremath{\mathrm{E}}}
\newcommand{\rF}{\ensuremath{\mathrm{F}}}
\newcommand{\rM}{\ensuremath{\mathrm{M}}}
\newcommand{\rN}{\ensuremath{\mathrm{N}}}
\newcommand{\rV}{\ensuremath{\mathrm{V}}}
\newcommand{\rmc}{\ensuremath{\mathrm{c}}}
\newcommand{\rmr}{\ensuremath{\mathrm{r}}}
\newcommand{\rmt}{\ensuremath{\mathrm{t}}}
\newcommand{\veps}{\ensuremath{\varepsilon}}
\newcommand{\eps}{\ensuremath{\epsilon}}
\newcommand{\vphi}{\ensuremath{\varphi}}
\newcommand{\ol}{\overline}
\newcommand{\eqstop}{\ensuremath{\, \text{.}}}
\newcommand{\eqcomma}{\ensuremath{\, \text{,}}}
\newcommand{\NN}{\ensuremath{\mathbb{N}}}
\newcommand{\ZZ}{\ensuremath{\mathbb{Z}}}
\newcommand{\CC}{\ensuremath{\mathbb{C}}}
\newcommand{\id}{\ensuremath{\mathrm{id}}}
\newcommand{\ra}{\ensuremath{\rightarrow}}
\newcommand{\thra}{\ensuremath{\twoheadrightarrow}}
\newcommand{\tr}{\ensuremath{\mathrm{tr}}}
\newcommand{\Aut}{\ensuremath{\mathrm{Aut}}}
\newcommand{\bs}{\ensuremath{\backslash}}
\newcommand{\ot}{\ensuremath{\otimes}}
\newcommand{\Cstar}{\ensuremath{\mathrm{C}^*}}
\newcommand{\Wstar}{\ensuremath{\mathrm{W}^*}}
\newcommand{\bo}{\ensuremath{\mathcal{B}}}
\newcommand{\cont}{\ensuremath{\mathrm{C}}}
\newcommand{\contb}{\ensuremath{\mathrm{C}_\mathrm{b}}}
\newcommand{\conto}{\ensuremath{\mathrm{C}_0}}
\newcommand{\lspan}{\ensuremath{\mathop{\mathrm{span}}}}
\newcommand{\Ad}{\ensuremath{\mathop{\mathrm{Ad}}}}
\newcommand{\Sym}{\ensuremath{\operatorname{Sym}}}
\newcommand{\lwreath}{\mathrel{\ensuremath{ \text{\reflectbox{$\wreath$}}}}}
\renewcommand{\Sym}{\ensuremath{\mathrm{Sym}}}
\newcommand{\ul}{\ensuremath{\underline}}
\newcommand{\authors}{Sel{\c c}uk Barlak and Sven Raum}
\renewcommand{\title}{Cartan subalgebras in dimension drop algebras}
\newcommand{\shorttitle}{Cartan subalgebras in dimension drop algebras}
\DeclareFontFamily{U}{bbold}{}
\DeclareFontShape{U}{bbold}{m}{n}
 {  <5> <6> <7> <8> <9> gen * bbold
   <10> <10.95> bbold10
  <12> <14.4> bbold12
 <17.28> <20.74> <24.88> bbold17
  }{}
\DeclareSymbolFont{bbold}{U}{bbold}{m}{n}
\DeclareSymbolFontAlphabet{\mathbbold}{bbold}
\begin{document}


\thispagestyle{empty}

\begin{center}
  \begin{minipage}[c]{0.9\linewidth}
    \textbf{\LARGE \title} \\[0.5em]
    by \authors
  \end{minipage}
\end{center}
  
\vspace{1em}

\renewcommand{\thefootnote}{}
\footnotetext{
  last modified on \today
}
\footnotetext{
  \textit{MSC classification}:
  46L05 ;
  46L85, 05A15
}
\footnotetext{
  \textit{Keywords}:
  Cartan subalgebra,
  C*-diagonal,
  dimension drop algebra,
  subhomogeneous C*-algebras,
  enumeration
}

\begin{center}
  \begin{minipage}{0.8\linewidth}
    \textbf{Abstract}.
    We completely classify Cartan subalgebras of dimension drop algebras with coprime parameters.  More generally, we classify Cartan subalgebras of arbitrary stabilised dimension drop algebras that are non-degenerate in the sense that the dimensions of their fibres in the endpoints are maximal. Conjugacy classes by an automorphism are parametrised by certain congruence classes of matrices over the natural numbers with prescribed row and column sums. In particular, each dimension drop algebra admits only finitely many non-degenerate Cartan subalgebras up to conjugacy. As a consequence of this parametrisation, we can provide examples of subhomogeneous \Cstar-algebras with exactly $n$ Cartan subalgebras up to conjugacy. Moreover, we show that in many dimension drop algebras two Cartan subalgebras are conjugate if and only if their spectrum is homeomorphic. 

  \end{minipage}
\end{center}


\section{Introduction}
\label{sec:introduction}

Cartan subalgebras constitute a centrepiece of modern structure theory of von Neumann algebras.  They were introduced by Dixmier in \cite{dixmier53-sous-anneaux-abliens} and related to measurable group theory and ergodic theory by Singer in \cite{singer55}.  Thanks to Popa's intertwining by bimodule techniques  introduced in \cite[Theorem A.1]{popa06-betti-numbers} and \cite[Theorem 2.1]{popa06-strong-rigidity-1}, which are particularly well compatible with Cartan subalgebras, best possible classification results for Cartan subalgebras in certain crossed product von Neumann algebras could be obtained \cite{popavaes11_2,popavaes12}.

\Cstar-algebraic Cartan subalgebras find their origin in the notion of groupoid \Cstar-algebras \cite{renault80} and were introduced much later than their von Neumann algebraic counterparts in \cite{renault08-cartan} by Renault, building on Kumjian's work on \Cstar-diagonals \cite{kumjian86}.  In analogy with Feldman-Moore's work on measured equivalence relations and von Neumann algebraic Cartan subalgebras \cite{feldmanmoore77,feldmanmoore77_1}, Cartan subalgebras of \Cstar-algebras connect to topological dynamics \cite{li15-rigidity} and geometric group theory \cite{spakulawillett11,li16-quasi-isometry} and therefore provide structure that has similar potential as Cartan subalgebras in von Neumann algebras.  Besides classical applications of groupoid \Cstar-algebras, which take a slightly different perspective, the potential of Cartan subalgebras in \Cstar-algebras is visible through a characterisation of a positive solution of the infamous UCT problem in \cite{barlakli15-uct,barlakli17-uct}. Moreover, (non-)existence and (non-)uniqueness results are discussed in \cite{lirenault17}, where among others a classification of Cartan subalgebras in homogeneous \Cstar-algebras in terms of principal bundles is provided. Recently in \cite{kundbyraumthielwhite17} yet another connection with \Cstar-algebraic Cartan subalgebras was found.  We say that two Cartan subalgebras in a \Cstar-algebra $B_1, B_2 \subset A$ are conjugate, if there is an automorphism $\alpha \in \Aut(A)$ such that $\alpha(B_1) = B_2$.  The \Cstar-superrigidity problem for torsion-free virtually abelian groups - i.e.\ to recover the group from its group \Cstar-algebras - can be solved assuming a classification up to conjugacy for Cartan subalgebras in certain subhomogeneous \Cstar-algebras.  This parallels the importance of classification results for Cartan subalgebras in proofs of \Wstar-superrigidity \cite{ioanapopavaes10,berbecvaes12}, although taking very different forms.  Work in \cite{kundbyraumthielwhite17} strongly motivates us to study Cartan subalgebras in subhomogeneous \Cstar-algebras. 


Further motivation for the study of Cartan subalgebras in \Cstar-algebras comes from the recent breakthrough results in the structure and classification theory of simple nuclear \Cstar-algebras achieved by many hands; see among others \cite{matuisato2012, matuisato2014, elliottgonglinniu2015, gonglinniu15, satowhitewinter2015, tikuisiswhitewinter17}. Many simple nuclear \Cstar-algebras that are classifiable in the sense of the Elliott program are known to have Cartan subalgebras, although it remains an open problem whether this is always the case. It is indeed true for UCT Kirchberg algebras, which follows either by work of Spielberg \cite{spielberg07} or by combining results of Katsura \cite{katsura2008} and Yeend \cite{yeend06, yeend07}. On the other hand, many stably finite nuclear classifiable \Cstar-algebras also admit Cartan subalgebras; see for example \cite{renault80, deeleyputnamstrung2015, putnam2016}.    \Cstar-algebras in the latter class are often expressible as inductive limits of subhomogeneous \Cstar-algebras, so that one might expect many of their Cartan subalgebras to arise as inductive limits of Cartan subalgebras of subhomogeneous \Cstar-algebras as well. This in particular applies to the Jiang-Su algebra $\mathcal Z$, \cite{jiangsu99}, which plays a key role in the structure and classification theory of simple nuclear \Cstar-algebras and which can be constructed as an inductive limit of prime dimension drop algebras.  

The purpose of this article to understand Cartan subalgebras of stabilised dimension drop algebras up to conjugacy by an automorphism. For this, we take a purely \Cstar-algebraic approach and classify a large class of Cartan subalgebras of dimension drop algebras and their stabilisations.  Dimension drop algebras are arguably among the most simple and at the same time most important subhomogeneous \Cstar-algebras, so that they form the ideal starting point for a systematic study of Cartan subalgebras in general subhomogeneous \Cstar-algebras.  We will adopt the following notation from \cite{jiangsu99} for stabilised dimension drop algebras throughout the article, denoting the algebra of matrices of size $m \times m$ by $\rM_m$,
\begin{equation*}
  I_{m,n,o}
  =
  \{
  f \in \cont([0,1], \rM_m \ot \rM_n \ot \rM_o) \mid
  f(0) \in \rM_m \ot 1 \ot \rM_o,  f(1) \in 1 \ot \rM_n \ot \rM_o
  \}
  \eqstop
\end{equation*}
We also write $I_{m,n} = I_{m,n,1}$ for the non-stabilised dimension drop algebras.

In this paper, we restrict our attention to what we term non-degenerate Cartan subalgebras of stabilised dimension drop algebras.  A Cartan subalgebra $B \subset I_{m,n,o}$ is called non-degenerate if the fibres at the endpoints satisfy $\dim B_0 = mo$ and $\dim B_1 = no$ and we call it degenerate otherwise, cf. Definition~\ref{def:non-degenerate}.  G{\'a}bor Szab{\'o} kindly pointed out to us that these are exactly the \Cstar-diagonals in the sense of Kumjian \cite{kumjian86} inside $I_{m,n,o}$, which are precisely the Cartan subalgebras with the unique extension property in the sense of Anderson \cite{anderson79}.  While Example \ref{ex:completely-degenerate-cartan} demonstrates the existence of degenerate Cartan subalgebras in dimension drop algebras of the form $I_{m,m}$, it turns out that Cartan subalgebras of the most important class of dimension drop algebras are all non-degenerate.  In view of Proposition \ref{prop:cstar-diagonals}, the next theorem answers \cite[Problem 7]{barlakszabo17-problems} in the setting of stabilised dimension drop algebras.
\begin{introtheorem}[See Theorem \ref{thm:cartans-non-degenerate}]
\label{intro:thm:non-degenerate Cartan subalgebras}
  The following two statements are equivalent for a stabilised dimension drop algebra $I_{m,n,o}$.
  \begin{itemize}
  \item $m,n,o$ are pairwise coprime.
  \item Every Cartan subalgebra in $I_{m,n,o}$ is non-degenerate.
  \item Every Cartan subalgebra of $I_{m,n,o}$ is a \Cstar-diagonal, that is, it has the unique extension property.
  \end{itemize}
\end{introtheorem}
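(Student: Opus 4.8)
The equivalence of the second and third statements is Proposition~\ref{prop:cstar-diagonals}, so it suffices to prove that pairwise coprimality of $m,n,o$ is equivalent to non-degeneracy of every Cartan subalgebra. The plan is to analyse the behaviour of an arbitrary Cartan subalgebra $B \subseteq I_{m,n,o}$ on the fibres over $[0,1]$. Over the open interval, restricting to the ideal of functions supported in a subinterval $(s,t) \subseteq (0,1)$ yields the homogeneous algebra $\conto((s,t)) \ot \rM_{mno}$, and since Cartan pairs restrict to Cartan pairs on ideals, the classification of Cartan subalgebras of homogeneous \Cstar-algebras from \cite{lirenault17} shows that $\ev_t(B)$ is a maximal abelian subalgebra of $\rM_{mno}$ for every interior $t$ and that $\wh{B}$ over $(0,1)$ is a trivial $mno$-sheeted covering, a disjoint union of $mno$ arcs. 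Each arc limits, as $t \to 0$, onto a single point of the finite set $\wh{B_0} \subseteq \wh{B}$ (with $B_0 = \ev_0(B)$, cf.\ Definition~\ref{def:non-degenerate}), and a normalised-trace argument identifies the number of arcs limiting onto a given point with the rank in $\rM_{mno}$ of the corresponding minimal projection of $B_0 \subseteq \rM_m \ot 1 \ot \rM_o$; since $\rM_m \ot 1 \ot \rM_o$ has minimal projections of rank $n$ in $\rM_{mno}$, this number is a multiple of $n$, and dually a multiple of $m$ at $t = 1$. Hence $\dim B_0 \le mo$ and $\dim B_1 \le no$, with equality precisely when all clusters have the minimal sizes $n$ and $m$ respectively; that is, non-degeneracy means exactly that the $mno$ arcs cluster in groups of the smallest admissible sizes at both endpoints.

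For the implication that pairwise coprimality forces non-degeneracy, I would feed the arc/cluster data into the structural classification of Cartan subalgebras of $I_{m,n,o}$ established earlier in the paper: up to conjugacy, $B$ is encoded by a matrix over $\NN$ (equivalently a bipartite multigraph whose $mno$ edges are the arcs) with vertices over $0$ of degree divisible by $n$ and vertices over $1$ of degree divisible by $m$, subject to the further congruence constraints that are the substance of that classification. Decomposing into connected components and counting edges from both sides, each component has a number of edges divisible by $\mathrm{lcm}(m,n) = mn$; writing this as $mn\ell$, the weights $\ell$ sum to $o$ over all components. One must then invoke coprimality of $m,n,o$, through these finer constraints, to conclude that every cluster at $0$ has size exactly $n$ and every cluster at $1$ has size exactly $m$, i.e.\ $\dim B_0 = mo$ and $\dim B_1 = no$. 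This last step is the crux: the plain divisibility and component bookkeeping is, by itself, consistent with degenerate clustering even when $m,n,o$ are pairwise coprime, so one genuinely needs the full invariant — the congruence class of the matrix — to exclude it, and I expect this to be the main obstacle, reducing to a rigidity property of that invariant applied component by component.

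For the converse I would argue contrapositively. If some pair among $m,n,o$ shares a common factor, I would produce a degenerate Cartan subalgebra by adapting the construction behind Example~\ref{ex:completely-degenerate-cartan}, which exhibits a completely degenerate Cartan subalgebra in $I_{m,m}$: the common factor is exactly what creates room to cluster the $mno$ arcs into an oversized group at one endpoint while still retaining a faithful conditional expectation onto, and a spanning family of normalisers of, the resulting subalgebra, so that it is a genuine — and necessarily degenerate — Cartan subalgebra.
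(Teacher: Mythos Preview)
Your proposal has a circularity at its heart. The ``structural classification of Cartan subalgebras of $I_{m,n,o}$ established earlier in the paper'' that you want to invoke is Theorem~\ref{thm:classification-cartan-dimension-drop} (and its refinement Theorem~\ref{thm:classification-by-matrices}), but these classify only the \emph{non-degenerate} Cartan subalgebras: the proof of Theorem~\ref{thm:classification-cartan-dimension-drop} begins by applying Proposition~\ref{prop:unique-cartan-one-sided-dimension-drop}, which requires non-degeneracy as a hypothesis. So the matrix invariant is simply undefined for a possibly degenerate $B$, and there is no ``congruence class of the matrix'' to appeal to. You correctly diagnose that the arc/cluster bookkeeping alone does not rule out degenerate clustering under pairwise coprimality; the trouble is that the extra ingredient you reach for is not available. (There is also a logical-order issue: Theorem~\ref{thm:cartans-non-degenerate} sits in Section~\ref{sec:non-degenerate-cartan}, before the classification in Sections~\ref{sec:twisted}--\ref{sec:counting}.)

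The paper's route is quite different and local rather than global-combinatorial. One first reduces to the one-sided case $J_{mo,n}$ by restriction to $[0,\tfrac12]$. Inside $J_{m,n}$, the normaliser forces all minimal projections of $B_0$ to have the same rank $d$, so $d \mid m$. Cutting by one such projection gives a Cartan $C \subset J_{d,n}$ with $C_0 = \CC 1$, and the substantive step is Lemma~\ref{lem:completely-degenerate-cartan}: if a Cartan in $J_{d,n}$ has one-dimensional fibre at $0$, then $d \mid n$. The proof of that lemma is analytic --- one takes a limit point $\rF$ of the fibrewise conditional expectations $\rE_t$ as $t \to 0$, obtaining a MASA $C \subset \rM_d \ot \rM_n$ orthogonal to $\rM_d \ot 1$ in Popa's sense, with $\cN_{\rM_d \ot \rM_n}(C) \cap (\rM_d \ot 1)$ spanning $\rM_d \ot 1$; a dimension count on the orbit of basis vectors under this normalising group then yields $d \mid n$. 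This operator-algebraic step is what replaces the missing combinatorial rigidity you were hoping the matrix invariant would supply.

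Your converse direction is essentially the paper's: Example~\ref{ex:completely-degenerate-cartan} builds the prototype in $J_{d,d}$, Example~\ref{ex:degenerate-cartan-non-coprime} stretches it to $J_{m,n}$ when $\gcd(m,n) > 1$, and one glues to obtain a degenerate Cartan in $I_{m,n,o}$ whenever some pair among $m,n,o$ is not coprime.
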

 Our first main result provides a parametrisation of conjugacy classes of non-degenerate Cartan subalgebras in terms of classical combinatorial objects explained below.  We emphasise that we classify Cartan subalgebras up to conjugacy by an automorphism.  This is in contrast to the usual conjugation by unitary results in von Neumann algebras -- see \cite{speelmanvaes12, spaas17} for an account on the model theoretic complexity of 'conjugacy by an automorphism' and 'conjugacy by a unitary'.  As a consequence, our strategy for the classification in Theorem \ref{intro:thm:classificaiton} is substantially different from the strategy employed in a von Neumann algebraic setup.  We first provide a list of Cartan subalgebras in dimension drop algebras which are explicitly described.  Within this class we provide a classification result.  Only then we prove that every non-degenerate Cartan subalgebra of a dimension drop algebra is conjugate to one coming from our list.
\begin{introtheorem}[See Corollary \ref{thm:classification-by-matrices}]
  \label{intro:thm:classificaiton}
    Conjugacy classes of non-degenerate Cartan subalgebras in $I_{m,n,o}$ are parametrised by congruence classes of matrices in $\rM(mo, n, no, m)$.
\end{introtheorem}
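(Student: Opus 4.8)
The plan is to combine the two preceding main results with a translation of the explicit list of Cartan subalgebras into the language of matrices. Recall that Theorem~\ref{thm:cartans-non-degenerate} shows that every non-degenerate Cartan subalgebra of $I_{m,n,o}$ is conjugate by an automorphism to one on our explicit list, and that the classification of the Cartan subalgebras on that list (established above) determines precisely when two of them are conjugate. So two tasks remain: (i) identify the parameters indexing the list with matrices in $\rM(mo,n,no,m)$; and (ii) check that under this identification conjugacy corresponds to congruence.

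For (i) I would argue as follows. A non-degenerate Cartan subalgebra $B\subset I_{m,n,o}$ has spectrum a compact space $X$ carrying a continuous surjection $\pi\colon X\to[0,1]$ dual to the inclusion of the centre; over the interior $\pi$ has finite fibres of cardinality $mno$, over $0$ of cardinality $mo$, and over $1$ of cardinality $no$. Since $[0,1]$ is contractible and one-dimensional, there is no room for monodromy or for a nontrivial twist, so $X$ --- and hence $B$ --- is pinned down by the way the $mno$ points of a generic fibre limit onto the $mo$ points over $0$ and the $no$ points over $1$. Encoding this by $A_{pq}=\#\{\text{components of }\pi^{-1}((0,1))\text{ whose closure meets both }p\in\pi^{-1}(0)\text{ and }q\in\pi^{-1}(1)\}$ produces an $mo\times no$ matrix over $\NN$; the $n$-to-one collapse at $0$ forces every row sum to be $n$ and, symmetrically, every column sum to be $m$, so $A\in\rM(mo,n,no,m)$. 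Conversely, every such matrix occurs, being realised by the member of the list attached to it.

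For (ii), suppose $\alpha\in\Aut(I_{m,n,o})$ satisfies $\alpha(B)=B'$. Then $\alpha$ induces a homeomorphism $X\to X'$ lying over the self-homeomorphism of $[0,1]=\mathrm{Prim}(I_{m,n,o})$ induced by $\alpha$. If $m\neq n$ the two endpoint fibres have different cardinalities, so the base map fixes $0$ and $1$, and the induced bijections $\pi^{-1}(0)\to(\pi')^{-1}(0)$ and $\pi^{-1}(1)\to(\pi')^{-1}(1)$ exhibit $A'$ as obtained from $A$ by independently permuting rows and columns, i.e.\ the two matrices are congruent. If $m=n$ the base map may in addition be the flip $t\mapsto 1-t$, which interchanges the endpoint fibres and thus transposes the matrix; this case is absorbed into the notion of congruence. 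Conversely, every permutation of rows, every permutation of columns, and (for $m=n$) the transpose is implemented by an automorphism of $I_{m,n,o}$: a permutation unitary of $\rM_m\otimes 1\otimes\rM_o$, respectively of $1\otimes\rM_n\otimes\rM_o$, extends, by connectedness of the ambient unitary group, to a unitary path on $[0,1]$ compatible with the boundary conditions, and conjugation by it permutes the relevant spectral points while fixing the others; the flip is the obvious automorphism of $I_{m,m,o}$. Combined with Theorem~\ref{thm:cartans-non-degenerate}, this gives the asserted bijection between congruence classes in $\rM(mo,n,no,m)$ and conjugacy classes of non-degenerate Cartan subalgebras.

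The hard part will be the forward direction of (ii): one must rule out that some automorphism which does not merely permute the spectral points over a fixed fibration, but rearranges the interior of $X$ in a subtler way, could identify Cartan subalgebras whose matrices are not congruent. This is controlled by the invariance of the primitive ideal space together with its subhomogeneity stratification and by the triviality of all relevant bundles and twists over the interval; carrying this out compatibly with the endpoint inclusions $\rM_m\otimes 1\otimes\rM_o\hookrightarrow\rM_{mno}$ and $1\otimes\rM_n\otimes\rM_o\hookrightarrow\rM_{mno}$ is exactly what the preceding classification accomplishes, so at the level of this corollary the argument comes down to bookkeeping.
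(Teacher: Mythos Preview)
Your citation is wrong: Theorem~\ref{thm:cartans-non-degenerate} characterises when \emph{all} Cartan subalgebras are non-degenerate (the coprimality criterion); the statement you need---that every non-degenerate Cartan subalgebra is conjugate to one on the explicit list---is Theorem~\ref{thm:classification-cartan-dimension-drop}.

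More substantively, your route differs from the paper's. The paper never touches the spectrum at this stage. It assigns to a permutation $\sigma\in\Sym(\ul m\times\ul n\times\ul o)$ a \emph{reduced matrix} by summing the permutation matrix of $\sigma$ over the $\ul n$-index on the left and the $\ul m$-index on the right (Definition~\ref{def:reduced-matrix}). The bijection with congruence classes is then pure combinatorics: Proposition~\ref{prop:congruence-matrices} shows that two permutations have congruent reduced matrices exactly when they lie in the same $(\Sym(\ul m\times\ul o)\lwreath\Sym(\ul n),\,\Sym(\ul m)\wreath\Sym(\ul n\times\ul o))$-double coset, and this is precisely the conjugacy criterion for $B_\sigma$ from Theorem~\ref{thm:non-conjugate-cartans} (together with Lemma~\ref{lem:matrix-flip} for the transpose when $m=n$). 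Surjectivity onto $\rM(mo,n,no,m)$ is handled by an explicit construction in Proposition~\ref{prop:characterise-reduced-matrices}. So the paper's proof of the present corollary is a one-line assembly of three prior references, all combinatorial.

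Your approach instead reads the matrix off the spectrum: rows and columns label the fibres over $0$ and $1$, and entries count arcs joining them. This is a legitimate alternative---it is essentially what the paper later proves as Proposition~\ref{prop:spectrum-geometric realisation}---and your forward direction of (ii) is actually cleaner than tracking double cosets: an automorphism induces a homeomorphism of spectra over the base (or over the flip), and compatibility with the arc structure immediately gives a row/column permutation or transpose. What you lose is self-containment: you need to \emph{know} that $\pi^{-1}((0,1))$ has exactly $mno$ arcs, which for an arbitrary non-degenerate Cartan is not obvious until you have already reduced to a twisted standard $B_\sigma$ and computed its spectrum. Your reverse direction of (ii) is also sketchier than you let on: extending a permutation unitary at one endpoint to a path in $\cU(mno)$ gives an automorphism of $I_{m,n,o}$, but you have not checked that it carries $B_\sigma$ to another twisted standard Cartan with the correct reduced matrix---this is exactly the computation in the first paragraph of the proof of Theorem~\ref{thm:non-conjugate-cartans}, and it uses the explicit form of $B_\sigma$, not just its spectrum.

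Finally, you flag the wrong step as the hard one. The forward direction of (ii) is not where subtlety lives: any automorphism of $I_{m,n,o}$ acts on the centre $\cont([0,1])$ as a homeomorphism of $[0,1]$, so it is automatically fibred; there is no ``subtler rearrangement of the interior'' to rule out. The genuine work sits in the classification Theorem~\ref{thm:non-conjugate-cartans} and the exhaustiveness Theorem~\ref{thm:classification-cartan-dimension-drop}, both of which you are (rightly) treating as black boxes here.
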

Here $\rM(a,b,c,d)$ denotes the set of matrices of size $a \times c$ with entries in the natural numbers such that each of the $a$ rows sums to $b$ and each of the $c$ columns sums to $d$.  Note that in order to obtain a non-empty set, the compatibility condition $ab = cd$ is required, see Notation \ref{not:row-column-sum-fixed}.   Two matrices $A, B \in \rM_{m,n}(\NN)$ are congruent if there are permutation matrices $\rho_1 \in \Sym(m)$ and $\rho_2 \in \Sym(n)$ such that $A  = \rho_1 B \rho_2$ or, in case $m = n$, $A^\rmt = \rho_1 B \rho_2$, see Definition \ref{def:congruence}.

Thanks to the parametrisation provided by Theorem \ref{intro:thm:classificaiton}, we are able to count Cartan subalgebras in some families of dimension drop algebras.  This allows us to solve the \Cstar-algebraic analogue of a well-known open problem in von Neumann algebras, namely to find for each $n \in \NN$ some ${\rm II}_1$ factor with exactly $n$ Cartan subalgebras up to (unitary) conjugacy.  In the von Neumann algebraic setting only partial results addressing this problem are available \cite{connesjones85,ozawapopa10-cartan1,popavaes10-superrigidity,speelmanvaes12,krogagervaes15}.  In the \Cstar-algebraic context, Li-Renault \cite{lirenault17} translate conjugacy of Cartan subalgebras of homogeneous \Cstar-algebras into a problem about principal bundles and then make use of known results on principle bundles to provide examples of homogeneous \Cstar-algebras with exactly $p(n)$ Cartan subalgebras up to conjugacy for any $n \in \NN_{\geq 1}$, where $p(n)$ denotes the number of partitions of $n$.
\begin{introtheorem}[See Corollary \ref{cor:exactly-n-cartans}]
  \label{intro:thm:exactly-n-cartans}
  For every $n \in \NN$ there is a subhomogeneous \Cstar-algebra that has exactly $n$ Cartan subalgebras up to conjugacy.
\end{introtheorem}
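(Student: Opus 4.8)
\noindent\emph{Proof plan.}
The plan is to produce, for each $n$, a single dimension drop algebra whose Cartan subalgebras can be counted directly from Corollary~\ref{thm:classification-by-matrices}, and to choose the parameters so that the relevant set of matrices, modulo congruence, has exactly $n$ elements. Concretely, I would take $A_n := I_{2,\,2n-1}$ for $n \geq 1$ (so $A_1 = I_{2,1}$). Since $\gcd(2,2n-1)=1$, the parameters $2$, $2n-1$, $1$ are pairwise coprime, so Theorem~\ref{thm:cartans-non-degenerate} guarantees that every Cartan subalgebra of $A_n$ is non-degenerate; hence Corollary~\ref{thm:classification-by-matrices} applies and identifies the set of conjugacy classes of Cartan subalgebras of $A_n$ with the set of congruence classes of matrices in $\rM(2,2n-1,2n-1,2)$, in the notation of Notation~\ref{not:row-column-sum-fixed}. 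Everything then reduces to a finite combinatorial count, and $A_n$ is visibly subhomogeneous.

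Next I would carry out that count. A matrix $X \in \rM(2,2n-1,2n-1,2)$ has two rows and $2n-1$ columns, nonnegative integer entries, all row sums equal to $2n-1$ and all column sums equal to $2$; thus each column is one of $(2,0)^\rmt$, $(1,1)^\rmt$, $(0,2)^\rmt$. Letting $a,b,c$ count the columns of each type, one gets $a+b+c = 2n-1$ and, equating the two row sums $2a+b = b+2c$, the identity $a=c$; hence $b = 2n-1-2a$ and $a$ ranges exactly over $\{0,1,\dots,n-1\}$, each value pinning down $X$ uniquely up to a permutation of its columns. To finish, I would check that these $n$ matrices lie in distinct congruence classes: a column permutation fixes the multiset of columns and hence the number $a$; a row permutation is either trivial or swaps the two rows, interchanging $(2,0)^\rmt \leftrightarrow (0,2)^\rmt$ and fixing $(1,1)^\rmt$, which again preserves $a$ (since $a=c$); and the transpose alternative of Definition~\ref{def:congruence} does not arise because $2 \neq 2n-1$ for $n\geq 2$, while $n=1$ gives the single matrix $(1,1)^\rmt$. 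Thus $A_n$ has exactly $n$ Cartan subalgebras up to conjugacy by an automorphism.

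I do not expect a genuine obstacle here: the argument is a direct application of the two main theorems together with an elementary enumeration, and the only points requiring care are the verification that congruence does not merge distinct values of $a$ (handled above by tracking $a=c$ under row and column permutations) and the trivial cross-checks for small $n$. If one adopts the convention $0 \in \NN$, the case $n=0$ is not covered by this family --- and indeed cannot be realised by any dimension drop algebra, since $\rM(mo,n,no,m)$ is always non-empty once $mo\cdot n = no\cdot m$ --- and should instead be disposed of separately by invoking the existence of subhomogeneous (indeed homogeneous) \Cstar-algebras with no Cartan subalgebra at all, as discussed in \cite{lirenault17}.
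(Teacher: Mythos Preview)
Your proposal is correct and follows essentially the same route as the paper: the paper's Corollary~\ref{cor:exactly-n-cartans} invokes Proposition~\ref{prop:cartans-two-n}, which performs exactly the column-type count you give (the paper's invariant $k=|\{j:A_{1j}=2\}|$ is your $a$, and the identity $a=c$ is the paper's observation that the number of $2$'s in the first row equals the number of $0$'s), and then handles $n=0$ via \cite{lirenault17} just as you do. The only cosmetic difference is that the paper treats the case of a single Cartan subalgebra by citing Theorem~\ref{thm:li-renault} for homogeneous algebras over contractible spaces, whereas you obtain it uniformly from $I_{2,1}$; both are immediate.
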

In Remark \ref{rem:infinitely-many-cartans}, we show that Theorem \ref{intro:thm:exactly-n-cartans} also allows one to construct \Cstar-algebras with exactly continuum many Cartan subalgebras up to conjugacy.



Comparing von Neumann algebraic with \Cstar-algebraic Cartan subalgebras, it becomes apparent that in the latter context there is one fundamental obstruction to conjugacy.  While all separable abelian diffuse von Neumann algebras are pairwise isomorphic, separable abelian \Cstar-algebras are classified by the homeomorphism type of their spectrum.  In \cite{kundbyraumthielwhite17}, it was already observed that plain uniqueness of Cartan subalgebra results are too much to be expected in the setting provided by \Cstar-superrigidity of virtually abelian groups.  The second most optimistic approach tries to prove that two Cartan subalgebras of a subhomogeneous \Cstar-algebra are conjugate if and only if their spectra are homeomorphic - this statement would suffice to prove \Cstar-superrigidity of virtually abelian groups based on results in \cite{kundbyraumthielwhite17}.  Surprisingly, this statement holds true in all dimension drop algebras and most stabilised dimension drop algebras, when one restricts to non-degenerate Cartan subalgebras. 
\begin{introtheorem}[See Theorem \ref{thm:conjugacy-homeomorphism}]
  \label{intro:thm:conjugacy-homeomorphism}
  Let $I_{m,n,o}$ be a stabilised dimension drop algebra such that either $(m,n) \neq (2,2)$ or $o = 1$.  Then two non-degenerate Cartan subalgebras of $I_{m,n,o}$ are conjugate by an automorphism if and only if their spectra are homeomorphic.
\end{introtheorem}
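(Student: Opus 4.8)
The forward implication is immediate: if $\alpha \in \Aut(I_{m,n,o})$ satisfies $\alpha(B_1) = B_2$, then $\alpha$ restricts to a $*$-isomorphism $B_1 \to B_2$, and hence induces a homeomorphism $\wh{B_1} \cong \wh{B_2}$. For the converse I would first invoke Corollary~\ref{thm:classification-by-matrices} (via Theorem~\ref{intro:thm:classificaiton}): every non-degenerate Cartan subalgebra of $I_{m,n,o}$ is conjugate to one of the explicit algebras $B_A$ from our list, indexed by a matrix $A \in \rM(mo,n,no,m)$, and $B_A$ and $B_{A'}$ are conjugate exactly when $A$ and $A'$ are congruent in the sense of Definition~\ref{def:congruence}. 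So the task reduces to the following purely combinatorial statement: if $A, A' \in \rM(mo,n,no,m)$ have $\wh{B_A} \cong \wh{B_{A'}}$, and $(m,n) \neq (2,2)$ or $o = 1$, then $A$ and $A'$ are congruent.

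The next step is to compute $\wh{B_A}$ explicitly. A Cartan subalgebra contains the centre $\cZ(I_{m,n,o}) = \cont([0,1])$, so there is a continuous surjection $\pi \colon \wh{B_A} \to [0,1]$. Over $(0,1)$ the fibre of $B_A$ is a maximal abelian subalgebra of $\rM_{mno}$, so $\pi$ restricts to an $mno$-sheeted covering of $(0,1)$, which is trivial; by non-degeneracy (Definition~\ref{def:non-degenerate}) $\dim(B_A)_0 = mo$ and $\dim(B_A)_1 = no$, so $\pi^{-1}(0)$ consists of $mo$ points and $\pi^{-1}(1)$ of $no$ points. Recording, from the explicit form of $B_A$, which of the $mno$ sheets limits to which of these endpoint points, one identifies $\wh{B_A}$ with the topological realisation $|G_A|$ of the finite bipartite multigraph $G_A$ that has $mo$ \emph{left} vertices, $no$ \emph{right} vertices, and $A_{ij}$ edges between the $i$-th left and the $j$-th right vertex; in $G_A$ every left vertex has degree $n$ (the common row sum) and every right vertex degree $m$ (the common column sum). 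Under this identification, congruence of $A$ corresponds to a graph isomorphism of $G_A$ preserving the bipartition, post-composed if necessary with the global swap of the two sides, which is available precisely when $mo = no$, i.e.\ $m = n$.

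It then remains to recover the congruence class of $A$ from the homeomorphism type of $|G_A|$, and I would do this by cases. If $\min(m,n) \geq 3$, every vertex of $G_A$ has degree at least $3$, so no connected component of $|G_A|$ is an arc or a circle; the branch set of $|G_A|$ (points having no neighbourhood homeomorphic to an open interval) is exactly the vertex set, each branch point tagged with its local degree, and the connected components of the complement are the open edges with their endpoints. Hence a homeomorphism $\wh{B_A} \cong \wh{B_{A'}}$ induces an isomorphism of abstract multigraphs $G_A \cong G_{A'}$: if $m \neq n$ the two vertex classes are singled out by their degree, so the isomorphism respects the bipartitions and $A \equiv A'$; if $m = n$, one uses that the bipartition of a connected bipartite graph is unique up to a swap and controls these component-wise swaps by the transpose clause of Definition~\ref{def:congruence}. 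If instead $\min(m,n) = 2 < \max(m,n)$, say $m = 2$, the right vertices have degree $2$; suppressing them — each is interior to a unique arc — turns $|G_A|$ into the realisation of a multigraph $\wt G_A$ on the left vertices, carrying a loop at $l_i$ for each column $2 e_i$ of $A$ and an edge $l_i l_j$ for each column $e_i + e_j$, all of whose vertices have degree $n \geq 3$; as above $\wt G_A$, hence the multiset of columns of $A$, hence $A$ up to permuting rows and columns, is determined, and since $m = 2 \neq n$ no transpose is involved. If $\min(m,n) \leq 1$ all matrices in $\rM(mo,n,no,m)$ are already congruent and there is nothing to prove. Finally, if $(m,n) = (2,2)$, the hypothesis forces $o = 1$, and $\rM(2,2,2,2)$ has exactly three elements, falling into two congruence classes whose spectra are a single circle and a disjoint union of two circles respectively, and these are not homeomorphic.

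The bookkeeping in the forward implication and the triviality of the covering over $(0,1)$ are routine; the two points I expect to be genuinely delicate are, first, making the identification $\wh{B_A} \cong |G_A|$ rigorous — one must verify that no information beyond the incidence matrix $A$ survives in the spectrum and that the gluings over the endpoints are the naive wedges — and, second, in the case $m = n$, the combinatorial argument showing that a graph isomorphism which may interchange the two sides on individual connected components does not leave the congruence class of $A$. It is exactly here that the hypothesis enters: when $(m,n) = (2,2)$ and $o \geq 2$, the graph $G_A$ is $2$-regular, hence a disjoint union of circles, which carry no branch-point structure and whose individual lengths the spectrum cannot detect, so that conjugacy is there strictly finer than homeomorphism of spectra.
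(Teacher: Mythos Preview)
Your proposal is correct and follows essentially the same route as the paper: reduce via the matrix classification, identify the spectrum with the geometric realisation of the associated bipartite graph (Proposition~\ref{prop:spectrum-geometric realisation}), recover the graph from its branch points with a separate treatment of the degree-$2$ case (Proposition~\ref{prop:recover-graphs}, where your suppression of the degree-$2$ vertices is exactly the inverse of the paper's barycentric subdivision), and conclude via the correspondence between graph isomorphism and matrix congruence (Remark~\ref{rem:graph-adjacency-matrix}). The subtlety you flag for $m=n$ with possibly disconnected graphs is likewise not spelled out in the paper, which records that step as an unproven remark.
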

In Proposition \ref{prop:no-conjugacy-homeomorphism}, we prove that the excluded cases $I_{2,2,o}$, for $o \geq 2$, do not obey the conclusion of Theorem \ref{intro:thm:conjugacy-homeomorphism}, hence rendering a general classification of Cartan subalgebras by their spectrum too optimistic.  Nevertheless, Theorem \ref{intro:thm:conjugacy-homeomorphism} is enough evidence to justify the following question.
\begin{introquestion}
  In which subhomogeneous \Cstar-algebras are two Cartan algebras conjugate if and only if their spectra are homeomorphic?
\end{introquestion}

\noindent This article has six sections.  After the introduction and some preliminaries, we study in Section~\ref{sec:one-sided} Cartan subalgebras of one-sided dimension drop algebras $I_{1, n, m}$. It is at this point where we introduce the notion of non-degenerate Cartan subalgebras in stabilised dimension drop algebras, and then prove that one-sided dimension drop algebras have a unique non-degenerate Cartan subalgebra up to conjugacy. This will be an important tool in the remaining sections. Section \ref{sec:non-degenerate-cartan} is devoted to the study of non-degenerate Cartan subalgebras in stabilised dimension drop algebras, which culminates in Theorem \ref{intro:thm:non-degenerate Cartan subalgebras}. In Section \ref{sec:twisted}, we introduce a class of Cartan subalgebras of dimension drop algebras that we term twisted standard Cartan subalgebras and classify them up to conjugacy. We conclude that section by proving that every non-degenerate Cartan subalgebra of a stabilised dimension drop algebra is conjugate to a twisted standard Cartan subalgebra.  In Section \ref{sec:counting}, we provide a parametrisation of conjugacy classes of twisted standard Cartan subalgebras by congruence classes of matrices as described above.   This leads to the proof of Theorem \ref{intro:thm:classificaiton}. Furthermore, it allows us to deduce some explicit counting results.  In particular, we obtain Theorem \ref{intro:thm:exactly-n-cartans}.  In Section \ref{sec:homeomorphism}, we study the spectra of twisted standard Cartan subalgebras and prove Theorem \ref{intro:thm:conjugacy-homeomorphism}.

\subsection*{Acknowledgements}

S.B. is supported by the Villum Fonden project grant ‘Local and global structures of groups and
their algebras’ (7423) (2014–2018).  Part of the work on this article was done during S.B.'s visit to EPFL and S.R.'s visit to the University of Southern Denmark.  We thank S{\o}ren Knudby for pointing out a mistake in Remark \ref{rem:infinitely-many-cartans}.


\section{Preliminaries}
\label{sec:preliminaries}

In this section, we fix some notation used throughout this work and recall some important definitions. For a natural number $n \geq 1$, write  $\ul n = \{1, \dotsc, n\}$. If $X$ is any set, we denote by $\Sym(X)$ the set of all bijections of $X$.  For short, $\Sym(n) = \Sym(\ul n)$.  Identifying a permutation $\sigma \in \Sym(n)$ with its permutation matrix satisfying
\begin{equation*}
  \sigma_{i,i'} =  \delta_{\sigma(i'), i} = 
  \begin{cases}
   1, & \text{if } \sigma(i') = i, \\
   0, & \text{otherwise,}
  \end{cases}
\end{equation*}
we obtain an embedding $\Sym(n) \ra \cU(n)$. We moreover denote by $e_{ij} \in \rM_n$ the matrix whose $ij$-th entry is $1$ and whose other entries are all zero. Similarly, $e_i \in \bC$ denotes the $i$-th standard vector.

Given an inclusion of \Cstar-algebras $B \subset A$, the normaliser of $B$ in $A$ is
\begin{equation*}
  \rN_A(B) = \{x \in A \mid x B x^*, x^* B x \subset B\}
  \eqstop
\end{equation*}
We call $B$ regular in $A$, if $\rN_A(B)$ generates $A$ as a \Cstar-algebra.  Moreover, if the inclusion $B \subset A$ is unital, we call
\begin{equation*}
  \cN_B(A) = \{u \in \cU(A) \mid uBu^* = B\}  
\end{equation*}
the unitary normaliser of $B$ in $A$. The normaliser $N_A(B)$ is a closed subset of $A$ that is also closed under multiplication and the $*$-operation, and the unitary normaliser $\cN_A(B)$ is a group, if it is defined.  In particular, if $B \subset A$ is regular, then $\lspan N_A(B)$ is dense in $A$. 

We also denote by $\Aut(B \subset A)$ the set of all $*$-automorphisms of $A$ that preserve $B$ setwise, that is, $\alpha \in \Aut(A)$ belongs to $\Aut(B \subset A)$ exactly if $\alpha(B) = B$. Furthermore, we call $B \subset A$ a MASA if $B$ is a maximal abelian \Cstar-subalgebra of $A$.

\begin{definition}[cf.\ {\cite[Definition~5.1]{renault08-cartan}}]
  \label{def:cartan-subalgebra}
Let $A$ be a \Cstar-algebra. A MASA $B \subset A$ is said to be a Cartan subalgebra if
\begin{enumerate}
\item $B$ contains an approximate unit for $A$,
\item $B$ is regular, and
\item there exists a faithful conditional expectation $A \to B$.
\end{enumerate}
In this case, $(A,B)$ is called a Cartan pair.
\end{definition}

\begin{example}
  \label{ex:cartan-finite-dimensional}
  For $m \geq 2$, the maximal abelian \Cstar-subalgebra of diagonal matrices $\rD_m \subset \rM_m$ is a Cartan subalgebra. More concretely, an element $a = \sum_{i,j = 1}^m \lambda_{i,j} e_{ij}$ normalises $\rD_m$ if and only if for each $i$, $\lambda_{i,j} \neq 0$ for at most one $j$ and for each $j$, $\lambda_{i,j} \neq 0$ for at most one $i$. In other words, $\rN_{\rM_m}(\rD_m) = \rD_m \rtimes \Sym(m)$, where $\Sym(m)$ acts by permutation  matrices. The unitary normaliser therefore satisfies $\cN_{\rM_m}(\rD_m) = \bT^m \rtimes \Sym(m)$. It follows that the set $\Aut(\rD_m \subset \rM_m)$ of automorphisms of $\rM_m$ that preserve $\rD_m$ is isomorphic with $(\bT^m/\bT) \rtimes \Sym(m)$. Up to (inner) conjugacy, $\rD_m$ is the unique Cartan subalgebra of $\rM_m$.

\end{example}

Recall that a \Cstar-algebra $A$ is called $n$-subhomogeneous for $n \in \NN$, if all of its irreducible representations have dimension at most $n$.  We say that $A$ is subhomogeneous if it is $n$-subhomogeneous for some $n \in \NN$. Furthermore, $A$ is called homogeneous if there exists some $n \in \NN$ such that all irreducible representations of $A$ have dimension $n$. Subhomogeneous \Cstar-algebras are exactly the \Cstar-subalgebras of homogeneous \Cstar-algebras; see \cite[IV.1.4.3]{blackadar06}.

\begin{definition}[cf.\ \cite{jiangsu99}]
  \label{def:dimension-drop-algebra}
The dimension drop algebra with parameters $m, n \in \NN_{\geq 1}$ is
\begin{equation*}
  I_{m,n}
  =
  \{
  f \in \cont([0,1], \rM_m \ot \rM_n) \mid
  f(0) \in \rM_m \ot 1,  f(1) \in 1 \ot \rM_n
  \}
  \eqstop
\end{equation*}
More generally, a stabilised dimension drop algebra is 
\begin{equation*}
  I_{m,n,o}
  =
  \{
  f \in \cont([0,1], \rM_m \ot \rM_n \ot \rM_o) \mid
  f(0) \in \rM_m \ot 1 \ot \rM_o,  f(1) \in 1 \ot \rM_n \ot \rM_o
  \}
  \eqcomma
\end{equation*}
for parameters $m,n,o \in \NN_{\geq 1}$.
\end{definition}

Let $X \subset \cont([0,1], M_n)$ be a subset. For a subinterval $I \subset [0,1]$, we define
\begin{equation*}
  X_I = \{f_{|I} \in \conto(I, M_n) \mid  f \in X,\ f_{|I} \in \conto(I, M_n) \}
  \eqstop
\end{equation*}
We also write $X_{\{t\}} = X_t$. For $x \in X$, we denote by $x_t = x(t) \in X_t$.

We conclude this section with a few words about Cartan subalgebras in homogeneous \Cstar-algebras over an interval. In \cite{lirenault17}, the relation between Cartan subalgebras in homogeneous \Cstar-algebras over a space $X$ and principal $\Aut(\rD_n \subset \rM_n)$-bundles over $X$ was pointed out.  In particular, in homogeneous \Cstar-algebras over contractible spaces, there is a unique Cartan subalgebra.
\begin{theorem}[cf.\ {\cite[Section 2]{lirenault17}}]
  \label{thm:li-renault}
  Let $A$ be a homogeneous \Cstar-algebra over a contractible space.  Then $A$ admits a unique Cartan subalgebra up to conjugacy.
\end{theorem}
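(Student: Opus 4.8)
The statement to prove is Theorem~\ref{thm:li-renault}: a homogeneous \Cstar-algebra over a contractible space admits a unique Cartan subalgebra up to conjugacy. Since the paper cites \cite{lirenault17} for this, the expected proof is an application of the principal bundle perspective developed there. Let me sketch how I would prove it.

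\bigskip

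\textbf{The plan is to use the correspondence between Cartan subalgebras of a homogeneous \Cstar-algebra and principal bundles, and then invoke triviality of bundles over a contractible base.}

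First I would recall the structure of homogeneous \Cstar-algebras: if $A$ is $n$-homogeneous over a compact (metrizable, say, or at least paracompact) space $X$, then $A$ is the section algebra $\Gamma(X, \cE)$ of a locally trivial bundle $\cE \to X$ with fibre $\rM_n$ and structure group $\Aut(\rM_n) = \mathrm{PU}(n)$ (Fell, Tomiyama–Takesaki). A Cartan subalgebra $B \subset A$ then, fibrewise, must be a Cartan subalgebra of each fibre $A(x) \cong \rM_n$; by Example~\ref{ex:cartan-finite-dimensional} every Cartan subalgebra of $\rM_n$ is conjugate to $\rD_n$, and $\Aut(\rD_n \subset \rM_n) \cong (\bT^n/\bT) \rtimes \Sym(n)$ is the relevant structure group. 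The key point — this is the content of \cite[Section 2]{lirenault17} — is that a Cartan subalgebra $B$ of $A = \Gamma(X,\cE)$ is the same data as a reduction of the structure group of $\cE$ from $\Aut(\rM_n)$ to $\Aut(\rD_n \subset \rM_n)$, equivalently a principal $\Aut(\rD_n \subset \rM_n)$-bundle $P \to X$ together with an isomorphism $P \times_{\Aut(\rD_n \subset \rM_n)} \rM_n \cong \cE$; and two Cartan subalgebras are conjugate by an automorphism of $A$ precisely when the corresponding principal bundles are isomorphic (as bundles, without fixing the isomorphism to $\cE$ — this matches ``conjugacy by an automorphism'' rather than ``by a unitary'').

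\bigskip

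Next I would reduce to a statement about bundle classification. Isomorphism classes of principal $G$-bundles over a paracompact space $X$ are classified by the homotopy set $[X, BG]$. When $X$ is contractible, $[X, BG]$ is a single point, so every principal $\Aut(\rD_n \subset \rM_n)$-bundle over $X$ is trivial; in particular any two are isomorphic. Therefore any two Cartan subalgebras of $A$ correspond to isomorphic principal bundles, hence are conjugate by an automorphism of $A$. Existence is equally immediate: the trivial bundle $X \times \rD_n \hookrightarrow X \times \rM_n \cong \cE$ (using that $\cE$ itself is trivial over contractible $X$, since $\Aut(\rM_n)$-bundles are also classified by $[X,B\Aut(\rM_n)] = \ast$) provides a Cartan subalgebra, namely $\cont(X, \rD_n) \subset \cont(X,\rM_n) \cong A$.

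\bigskip

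The main obstacle — or rather the point where one must be careful — is establishing the precise dictionary: (i) that a Cartan subalgebra, which a priori is only an abstract abelian subalgebra with a conditional expectation and regularity, really does assemble into a locally trivial sub-bundle with structure group $\Aut(\rD_n \subset \rM_n)$ (one needs the fibrewise analysis to be locally coherent, i.e.\ local triviality of the pair $(B \subset A)$, which uses that the normalisers generate and a partition-of-unity / local-section argument), and (ii) that the notion of conjugacy by an automorphism of $A$ matches isomorphism of the associated principal bundles. Both of these are exactly what \cite[Section 2]{lirenault17} supplies, so in the write-up I would state them as the cited input and then the contractibility argument is a one-line consequence. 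A secondary technical point is the hypothesis on $X$ needed for the classifying-space statement (paracompactness); for the interval, the only case needed later in the paper, this is automatic.
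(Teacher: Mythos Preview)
Your proposal is correct and matches the intended argument: the paper does not give its own proof of this theorem but simply cites \cite[Section~2]{lirenault17}, and your sketch is precisely the Li--Renault principal-bundle dictionary together with triviality of $G$-bundles over a contractible base. There is nothing to add beyond what you wrote.
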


We will need a stronger result for the special case of homogeneous \Cstar-algebras over an interval, which immediately follows from Li-Renault's.
\begin{corollary}
  \label{cor:li-renault-unitary}
  Let $I$ be an interval and $A = \cont_0(I, \rM_m)$.  Then $A$ admits a unique Cartan subalgebra up to conjugacy by an inner automorphism associated with a unitary in the multiplier algebra of $A$, $M(A) = \contb(I,\rM_m)$.
\end{corollary}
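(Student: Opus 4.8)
The plan is to sharpen Theorem~\ref{thm:li-renault} by keeping track of the automorphism that implements the conjugacy. Write $D = \cont_0(I,\rD_m) \subset A$ for the standard diagonal subalgebra; it is a Cartan subalgebra of $A$, the conditional expectation being the fibrewise one induced by $\rM_m \to \rD_m$. By Theorem~\ref{thm:li-renault} every Cartan subalgebra $B \subset A$ satisfies $\alpha(B) = D$ for some $\alpha \in \Aut(A)$, so it suffices to show that such an $\alpha$ may be chosen of the form $\Ad(u)$ with $u$ a unitary in $M(A) = \contb(I,\rM_m)$: once this is known, any two Cartan subalgebras of $A$ are both inner-conjugate to $D$ and hence to each other.

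First I would strip off the topological part of $\alpha$. Since $\rM_m$ is simple, $\mathrm{Prim}(A) \cong I$, and $\alpha$ induces a self-homeomorphism of $I$; this homeomorphism is realised by a change-of-variable automorphism $\theta \in \Aut(A)$, of the form $f \mapsto f \circ g$ for a suitable $g \in \mathrm{Homeo}(I)$, and $\theta$ visibly preserves $D$. Replacing $\alpha$ by $\theta^{-1} \circ \alpha$ we may therefore assume that $\alpha$ induces the identity on $\mathrm{Prim}(A)$, while still $\alpha(B) = D$ because $\theta^{-1}(D) = D$.

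An automorphism of $A = \cont_0(I) \otimes \rM_m$ that fixes $\mathrm{Prim}(A) \cong I$ pointwise is $\contb(I)$-linear — it fixes the Dauns-Hofmann centre $Z(M(A)) = \contb(I)$ pointwise, $I$ being dense in its spectrum — and hence decomposes fibrewise as $\alpha(f)(t) = \alpha_t(f(t))$ with each $\alpha_t \in \Aut(\rM_m) = \Inn(\rM_m)$; evaluating on the sections $g \otimes e_{kl}$ shows that $t \mapsto \alpha_t$ is continuous as a map $I \to PU(m) = \cU(m)/\bT$. Because $I$ is contractible, so that $H^2(I;\ZZ) = 0$ and the principal $\bT$-bundle $\cU(m) \to PU(m)$ admits a continuous section over $I$ — equivalently, any $PU(m)$-valued map on the interval $I$ lifts to $\cU(m)$ — the map $t \mapsto \alpha_t$ lifts to a continuous $u \colon I \to \cU(m)$ with $\alpha_t = \Ad(u(t))$ for all $t$. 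Then $u$ is a unitary in $\contb(I,\rM_m) = M(A)$ and $\alpha = \Ad(u)$, which completes the argument. I expect this lifting step to be the only genuine obstacle: it is precisely where the interval structure of $I$ enters (for the circle the corresponding statement fails, which is why over general base spaces Li-Renault must work with honest principal bundles), whereas the fibrewise decomposition and the change-of-variable reduction are routine.
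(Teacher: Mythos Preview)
Your proof is correct and follows essentially the same approach as the paper: apply Theorem~\ref{thm:li-renault}, compose with a Cartan-preserving automorphism to trivialise the action on the base, and then lift the resulting continuous map $I \to PU(m)$ to a unitary in $\contb(I,\cU(m))$. The paper implements the lifting step by exhausting $I$ with closed subintervals and patching local lifts via the homotopy lifting property, whereas you appeal to contractibility of $I$ directly; this is a cosmetic difference only.
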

\begin{proof}
  Let $B \subset A$ be a Cartan subalgebra.  Since the interval $I$ is contractible, we can apply Theorem \ref{thm:li-renault} to obtain an automorphism $\alpha \in \Aut(A)$ such that $\alpha(B)$ is the standard Cartan subalgebra $C = \cont_0(I, \rD_m)$.  Composing $\alpha$ with an automorphism in $\Aut(C \subset A)$, we may assume that $\alpha|_{\cont_0(I)} = \id$.  We can then consider $\alpha$ as a continuous map $I \ra \Aut(\rM_m) \cong \cU(m)/\bT$.  Let $(I_n)_{n \in \NN}$ be an ascending sequence of closed intervals in $I$ such that $\bigcup_n I_n = I$.   Since $\cU(m) \thra \cU(m)/\bT$ has the structure of a principal $\bT$-bundle, it has the homotopy lifting property, so that we find for every $n \in \NN$ a lift $u_n \in \cont(I_n, \cU(m))$ for $\alpha|_{I_n}$. In other words, $\alpha|_{I_n} = \Ad u_n$. Now define $v_1 = u_1$.  If $v_n$ is defined, consider $f_n := (u_{n+1}|_{I_n})^*v_n \in \cont(I_n, \bT)$.  Extend $f_n$ to an element $g_n \in \cont(I_{n+1}, \bT)$ and set $v_{n+1} = u_{n+1}g_n$.  We obtain a family $(v_n)_n$ of unitaries with $v_n \in \cont(I_n, \cU(m))$ satisfying $v_{n+1}|_{I_n} = v_n$ and $\Ad v_n = \alpha|_{I_n} \in \Aut(\cont(I_n,\rD_m))$.  We can now define $v \in \cont_b(I,\rM_m)$ by the requirement $v|_{I_n} = v_n$ for all $n \in \NN$ and obtain a unitary satisfying $\Ad v = \alpha$.  This proves that $B$ is unitarily conjugate to the standard Cartan subalgebra $C \subset A$ via the unitary $v \in M(A)$.
\end{proof}


\section{One-sided dimension drop algebras}
\label{sec:one-sided}

In this section, we are going to investigate uniqueness of Cartan subalgebras in one of the easiest possible subhomogeneous \Cstar-algebras, which we call here one-sided dimension drop algebras.
\begin{definition}
  \label{def:one-sided-dimension-drop}
  The one-sided dimension drop algebra for $m,n \geq 1$ is
  \begin{equation*}
    J_{m,n} = \{f \in \cont([0,1], \rM_m \ot \rM_n) \mid f(0) \in \rM_m \ot 1\}
    \eqstop
  \end{equation*}
\end{definition}

\begin{remark}
  \label{rem:one-sided-is-stabilised-dimension-drop}
  The one-sided dimension drop algebras are all stabilised dimension drop algebras.  We have $J_{m,n} = I_{1,n,m}$.
\end{remark}

\begin{example}
  \label{ex:cartan-one-sided-dimension-drop}
  We are going to call the subalgebra
  \begin{equation*}
    C_{m,n} = \{f \in \cont([0,1], \rD_m \ot \rD_n) \mid f(0) \in \rD_m \ot 1\} \subset J_{m,n}
  \end{equation*}
  the standard Cartan subalgebra of $J_{m,n}$.  It is a Cartan subalgebra indeed, as the following argument shows.

  First we show that $C_{m,n} \subset J_{m,n}$ is a MASA.  This follows from the fact that for every $t \in (0,1]$, the fibre $\rD_m \ot \rD_n = (C_{m,n})_t \subset (J_{m,n})_t = \rM_m \ot \rM_n$ is a MASA.  So if $f \in C_{m,n}' \cap J_{m,n}$, then $f(t) \in \rD_m \ot \rD_n$ for all $t \in (0,1]$ and hence $f(0) \in \rD_m \ot \rD_n \cap \rM_m \ot 1 = \rD_m \ot 1$.  This shows that $f \in C_{m,n}$.  Second, the natural conditional expectation $\cont([0,1], \rM_m \ot \rM_n) \ra \cont([0,1], \rD_m \ot \rD_n)$ restricts to a faithful conditional expectation $J_{m,n} \ra C_{m,n}$.  It remains to show that $C_{m,n}$ is regular in $J_{m,n}$.

  Let $f \in J_{m,n}$ and $\veps > 0$.  Since $f$ is continuous, there is $\delta > 0$ such that $\|f(t) - f(0)\| < \veps$ for all $t \in [0, \delta]$.  Because $(C_{m,n})_{[\delta, 1]} \subset (J_{m,n})_{[\delta, 1]}$ coincides with $C([\delta,1], \rD_m \ot \rD_n) \subset \cont([\delta, 1], \rM_m \ot \rM_n)$, there are finitely many $\tilde f_i \in N_{(J_{m,n})_{[\delta, 1]}}((C_{m,n})_{[\delta, 1]})$ and numbers $c_i \in \CC$ such that $\|\sum_i c_i \tilde f_i - f|_{[\delta, 1]}\| < \veps$. In fact, we may choose the $\tilde f_i$ to be in $C([\delta,1], N_{\rM_m}(\rD_m) \odot N_{\rM_n}(\rD_n))$, where $N_{\rM_m}(\rD_m) \odot N_{\rM_n}(\rD_n)$ denotes the set of elementary tensors. Now, extend each $\tilde f_i$ to the unique element $f_i \in J_{m,n}$ that is affine on $[0,\delta]$ and satisfies $f_i(0) = (\id \ot \tr)(\tilde f_i(\delta))$.  A short calculation shows that $\|f - \sum_i c_i f_i\| < 4 \eps$. Since $(\id \ot \tr)(N_{\rM_m}(\rD_m) \odot N_{\rM_n}(\rD_n))  \subset  \rN_{\rM_m}(\rD_m) \ot 1$, it follows that $f_i(t)$ normalises $(C_{m,n})_t$ for all $t \in [0, 1]$. We conclude that each $f_i \in \rN_{J_{m,n}}(C_{m,n})$, which finishes the proof that $C_{m,n}$ is a Cartan subalgebra of $J_{m,n}$.
\end{example}

We first prove a uniqueness of Cartan subalgebras result in one-sided dimension drop algebras of the form $J_{1,n}$. For this, we introduce the following notation.  If $u \in \cont_b((0,1], \rM_n)$ is a unitary, then $\Ad u$ induces a unique automorphism of $J_{1,n}$.  Indeed, if $f \in J_{1,n}$ and $u \in \cont_b((0,1], \cU(n))$, then $f(t) \ra \lambda \in \bC$ as $t \ra 0$ implies that also $u_t f(t) u_t^* \ra \lambda$ as $t \ra 1$.
\begin{lemma}
  \label{lem:unique-masa-one-sided-dimension-drop}
  For every Cartan subalgebra $B \subset J_{1,n}$, there is a unitary $u \in \cont_b((0,1], \cU(n))$ such that $(\Ad u)(B) = C_{1,n}$.  In particular, $J_{1,n}$ has a unique Cartan subalgebra up to conjugacy by an automorphism.
\end{lemma}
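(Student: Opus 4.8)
The plan is to peel off the behaviour at the endpoint $0$ and reduce to the interval situation already handled by Corollary~\ref{cor:li-renault-unitary}. Since $\rM_1 \ot \rM_n = \rM_n$, the algebra $J_{1,n}$ equals $\{f \in \cont([0,1],\rM_n) \mid f(0) \in \bC\}$, and $K := \{f \in J_{1,n} \mid f(0) = 0\}$ is precisely the closed two-sided ideal $\cont_0((0,1],\rM_n)$ of $J_{1,n}$, with quotient $J_{1,n}/K \cong \bC$. I would (i) show that $B \cap K$ is a Cartan subalgebra of $K$; (ii) apply Corollary~\ref{cor:li-renault-unitary} to the interval $(0,1]$ to obtain a unitary $u$ in $\cU(M(K)) = \cont_b((0,1],\cU(n))$ with $(\Ad u)(B \cap K) = \cont_0((0,1],\rD_n) = C_{1,n} \cap K$; and (iii) check that the automorphism of $J_{1,n}$ induced by $\Ad u$ (cf.\ the remark preceding this Lemma) carries $B$ onto $C_{1,n}$, which then also gives uniqueness up to an automorphism.

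For the bookkeeping needed in (i) and (iii): $J_{1,n}$ is unital, so the approximate unit for $J_{1,n}$ inside $B$ forces $1 \in B$, and the centre $\cont([0,1]) \cdot 1$ — consisting of elements commuting with everything — lies in $B' \cap J_{1,n} = B$. Since $B$ is a MASA and each fibre $B_t := \ev_t(B)$ is abelian, the $\ast$-subalgebra $\{f \in J_{1,n} \mid f(t) \in B_t \text{ for all } t\}$ is abelian, hence contained in $B' \cap J_{1,n} = B$; that is, $B$ is determined by its fibres, and the same holds for $C_{1,n}$. Multiplying elements of $B$ by functions in $\cont_0((0,1]) \cdot 1 \subseteq B \cap K$ shows $(B \cap K)_t = B_t$ for every $t \in (0,1]$, while $B_0 = \bC$ because the ambient fibre at $0$ is already $\bC$.

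Step (i) is the heart of the matter. Regularity of $B \cap K$ in $K$ is inherited: if $v \in \rN_{J_{1,n}}(B)$ and $h \in \cont_0((0,1])$, then $hv \in \rN_K(B \cap K)$ (using $|h|^2 \cdot 1 \in B\cap K$ and that $v(B\cap K)v^*$ vanishes at $0$), and such elements span a dense subspace of $K$; moreover $\cont_0((0,1]) \cdot 1 \subseteq B \cap K$ is an approximate unit for $K$. For the conditional expectation, the key observation is that the faithful conditional expectation $E \colon J_{1,n} \to B$ satisfies $E(f)(t) = 0$ whenever $f(t) = 0$: with central $\psi \in \cont([0,1])$, $\psi(t) = 1$, and supports shrinking to $\{t\}$, one has $\psi E(f) = E(\psi f)$, $\|\psi E(f)\| \to \|E(f)(t)\|$ and $\|\psi f\| \to \|f(t)\| = 0$, whence $E(f)(t) = 0$. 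In particular $E(K) \subseteq K \cap B = B \cap K$, so $E$ restricts to a faithful conditional expectation $K \to B \cap K$. Finally, $B \cap K$ is a MASA in $K$: by the fibre description it equals $\{f \in K \mid f(t) \in B_t\}$, so any $x \in K$ commuting with it satisfies $x(t) \in B_t' \cap \rM_n = B_t$ for all $t$ — the equality $B_t' \cap \rM_n = B_t$ holding because $B_t$ is the image of the Cartan subalgebra $B$ in $J_{1,n}/\ker \ev_t \cong \rM_n$ and Cartan subalgebras pass to quotients by closed ideals, so by Example~\ref{ex:cartan-finite-dimensional} $B_t$ is a conjugate of $\rD_n$, hence maximal abelian. (Alternatively, restricting $B$ to a closed subinterval $[\delta,1]\subset(0,1]$ and invoking Theorem~\ref{thm:li-renault} with the principal-bundle picture of Li--Renault shows each $B_t$, $t>0$, is a conjugate of $\rD_n$.)

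With (i) in hand, Corollary~\ref{cor:li-renault-unitary} supplies $u \in \cU(M(K)) = \cont_b((0,1],\cU(n))$ as in (ii), and by the remark preceding the Lemma $\Ad u$ extends to an automorphism of $J_{1,n}$. For $t \in (0,1]$ one computes $((\Ad u)(B))_t = u(t) B_t u(t)^* = u(t)(B \cap K)_t u(t)^* = (C_{1,n} \cap K)_t = \rD_n = (C_{1,n})_t$, and $((\Ad u)(B))_0 = \bC = (C_{1,n})_0$ since $\Ad u$ fixes scalars in the endpoint fibre. Hence $(\Ad u)(B)$ and $C_{1,n}$ are MASAs of $J_{1,n}$ with identical fibres, so they coincide by the fibre description, proving the Lemma. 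The main obstacle is step (i) — that $B$ restricts to a Cartan subalgebra of the ideal $K$ — and within it the control of the fibres $B_t$ as $t \to 0$: one must rule out that $B$ itself degenerates along some sequence $t_k \downarrow 0$, which is exactly what would let the endpoint interfere with the interval argument.
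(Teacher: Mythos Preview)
Your proposal is correct and follows essentially the same strategy as the paper's proof: restrict to the open half-interval $(0,1]$, invoke Corollary~\ref{cor:li-renault-unitary} there, and lift the resulting inner automorphism of the ideal $K=\cont_0((0,1],\rM_n)$ to an automorphism of $J_{1,n}$. The paper compresses all of your step~(i) into the single sentence ``$B_{(0,1]} \subset (J_{1,n})_{(0,1]}$ is a Cartan subalgebra,'' so your detailed verification of regularity, the restricted conditional expectation, and the fibrewise MASA property simply unpacks what the paper leaves to the reader.
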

\begin{proof}
  First note that $B_{(0,1]} \subset J_{1,n}|_{(0,1]} = \cont((0,1], \rM_n)$ is a Cartan subalgebra.  Hence, Corollary~\ref{cor:li-renault-unitary} provides us with a unitary $u \in \cont_b((0,1], \rM_n)$ such that $u B_{(0,1]} u^* = \conto( (0,1], \rD_n)$.  As discussed above, $\Ad u$ induces a unique automorphism of  $J_{1,n}$, which then satisfies $(\Ad u)(B) = C_{1,n}$.
\end{proof}

We would like to reduce considerations about Cartan subalgebras in general one-sided dimension drop algebras, to the case considered in Lemma \ref{lem:unique-masa-one-sided-dimension-drop}.   We can do so for the class of so-called non-degenerate Cartan subalgebras in stabilised dimension drop algebras described in the next definition, which will be treated in the rest of the article.  G{\'a}bor Szab{\'o} kindly pointed out to us that non-degenerate Cartan subalgebras in $I_{m,n,o}$ are exactly the \Cstar-diagonals in the sense of Kumjian \cite{kumjian86}, which are precisely the Cartan subalgebras with the unique extension property in the sense of Anderson \cite{anderson79}.  In stabilised dimension drop algebras with coprime parameters, every Cartan subalgebra is non-degenerate, as we will see in Theorem \ref{thm:cartans-non-degenerate}.
 \begin{definition}
   \label{def:non-degenerate}
   A Cartan subalgebra $B \subset J_{m,n}$ is called non-degenerate, if $\dim B_0 = m$.  In analogy, a Cartan subalgebra $B \subset I_{m,n,o}$ is called non-degenerate, if $\dim B_0 = mo$ and $\dim B_1 = no$.
 \end{definition}

Before proving uniqueness of non-degenerate Cartan subalgebras in arbitrary stabilised dimension drop algebras, let us analyse their normalisers.
\begin{lemma}
  \label{lem:partial-isometries-in-normaliser}
  Let $B \subset J_{m,n}$ be a Cartan subalgebra containing the constant functions $e_{ii} \ot 1$.  Then there exists $\veps > 0$ such that $e_{ij} \ot 1 \in \rN_{J_{m,n}}(B)|_{[0,\veps]}$ for all $i,j \in \{1, \dotsc, m\}$.
\end{lemma}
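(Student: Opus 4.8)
The plan is to exploit the fact that, near the endpoint $0$, the algebra $J_{m,n}$ looks like $\rM_m \otimes 1$ plus a small perturbation, and that $B$ restricted to a small interval $[0,\veps]$ must be close to the standard Cartan picture because $B$ contains the $m$ constant projections $e_{ii}\otimes 1$, which already span the fibre $B_0 = \rD_m \otimes 1$. First I would pass to the one-sided picture on $(0,\veps]$: by Lemma~\ref{lem:unique-masa-one-sided-dimension-drop} applied appropriately (or by Corollary~\ref{cor:li-renault-unitary} on a suitable interval), there is a unitary $u \in \cont_b((0,\veps], \cU(mn))$ conjugating $B|_{(0,\veps]}$ into the standard Cartan subalgebra $\conto((0,\veps], \rD_m\otimes\rD_n)$. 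The key point will be to arrange that this conjugating unitary, together with the boundary condition at $0$, is compatible with the constant projections $e_{ii}\otimes 1 \in B$, so that after conjugation the $e_{ii}\otimes 1$ land among the standard minimal projections of $B$ and, crucially, the matrix units $e_{ij}\otimes 1$ go to honest normalisers of the standard Cartan subalgebra of $J_{m,n}|_{[0,\veps]}$.

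More concretely, the steps I would carry out are: (1) Since $B$ is a Cartan subalgebra containing the commuting projections $e_{ii}\otimes 1$ and these sum to the unit, decompose $B$ accordingly and observe that on a sufficiently small $[0,\veps]$ the minimal projections of $B$ refine the partition given by the $e_{ii}\otimes 1$; this uses continuity of the projections in $B$ together with $\dim B_t$ being upper semicontinuous and the known fibre structure. (2) On the corner $(e_{ii}\otimes 1) J_{m,n}(e_{ii}\otimes 1)|_{(0,\veps]} \cong \cont((0,\veps],\rM_n)$ the restriction of $B$ is a Cartan subalgebra, so Corollary~\ref{cor:li-renault-unitary} yields unitaries $u_i \in \contb((0,\veps],\rM_n)$ conjugating it to the diagonal; assemble $u = \sum_i e_{ii}\otimes u_i$. (3) Check that $\Ad u$ extends to $[0,\veps]$ because on the fibre at $0$ it acts within $\rM_m \otimes 1$, where the $n$-part is irrelevant, so the boundary condition $f(0) \in \rM_m \otimes 1$ is preserved. (4) After this conjugation $B|_{[0,\veps]}$ is the standard Cartan subalgebra and $e_{ij}\otimes 1$ is conjugated to $e_{ij}\otimes u_i u_j^*$; since $e_{ij}$ normalises $\rD_m$ and $u_i u_j^*$ is unitary, one still has to see that $u_i u_j^*$ can be taken to permute $\rD_n$ — but actually the cleaner route is to conjugate back: $e_{ij}\otimes 1 = u^* (e_{ij} \otimes u_i u_j^*) u$ and each factor normalises the conjugated Cartan subalgebra, so $e_{ij}\otimes 1$ normalises $B|_{[0,\veps]}$ fibrewise, hence lies in $\rN_{(J_{m,n})_{[0,\veps]}}(B_{[0,\veps]})$, which is what the statement asserts after identifying $\rN_{J_{m,n}}(B)|_{[0,\veps]}$ with $\rN_{(J_{m,n})_{[0,\veps]}}(B_{[0,\veps]})$.

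The main obstacle I anticipate is step (1)–(3): ensuring that a single $\veps > 0$ works simultaneously for all $m^2$ matrix units and that the conjugating unitary assembled from the corners genuinely extends continuously (and unitarily) across the endpoint $t=0$ respecting the dimension-drop boundary condition. The subtlety is that $B_0$ has dimension only $m$, not $mn$, so near $0$ the $\rM_n$-factor of $B$ "collapses" and one must argue that the minimal projections of $B$ at small $t>0$ nevertheless sit inside the corners $e_{ii}\otimes\rM_n$ cut out by the constant projections — this is where the hypothesis that the $e_{ii}\otimes 1$ are constant functions in $B$, rather than merely $B_0 = \rD_m\otimes 1$, is used decisively. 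Once this localisation is in place, the rest is the bookkeeping of Corollary~\ref{cor:li-renault-unitary} applied corner by corner, together with the elementary observation that $\rN_{\rM_m}(\rD_m) = \rD_m \rtimes \Sym(m)$ from Example~\ref{ex:cartan-finite-dimensional} guarantees $e_{ij}\otimes(\text{unitary})$ is a normaliser of $\rD_m\otimes\rD_n$ precisely when the unitary normalises $\rD_n$, which after the corner-wise diagonalisation it does.
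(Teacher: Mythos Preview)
Your approach differs substantially from the paper's, and the step you yourself flag is a genuine gap that does not close.

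The paper does not diagonalise $B$ at all. It argues directly from regularity: since $\lspan \rN_{J_{m,n}}(B)$ is dense in $J_{m,n}$, there is some $x \in (e_{ii}\otimes 1)\,\rN_{J_{m,n}}(B)\,(e_{jj}\otimes 1)$ with $x_0 = e_{ij}\otimes 1$. Because $e_{ii}\otimes 1,\, e_{jj}\otimes 1 \in B$, this $x$ already lies in $\rN_{J_{m,n}}(B)$, so $|x|\in (e_{jj}\otimes 1)B$; near $0$ one has $|x|_t$ close to $e_{jj}\otimes 1$, hence invertible in the corner, and the polar part $v = x\,|x|^{-1}$ (cut off by a bump function) is again a normaliser which on $[0,\veps]$ is a partial isometry with source $e_{jj}\otimes 1$ and range $e_{ii}\otimes 1$. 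The whole argument takes place \emph{inside} $\rN_{J_{m,n}}(B)$: regularity supplies the off-diagonal element and polar decomposition keeps it there.

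Your route --- diagonalise each corner $(e_{ii}\otimes 1)B(e_{ii}\otimes 1)$ independently via Corollary~\ref{cor:li-renault-unitary}, assemble $u=\sum_i e_{ii}\otimes u_i$, and then read off that $e_{ij}\otimes 1$ normalises $B$ --- breaks exactly where you say. After conjugation by $u$, the element $e_{ij}\otimes 1$ becomes $e_{ij}\otimes u_iu_j^*$, and this normalises $\rD_m\otimes \rD_n$ only if $u_iu_j^*\in \cN_{\rM_n}(\rD_n)$, equivalently only if the $i$-th and $j$-th corners of $B_t$, viewed inside $\rM_n$, are the \emph{same} MASA for all small $t>0$. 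Nothing in your setup forces that: the $u_i$ are chosen independently, and the hypothesis only says $e_{ii}\otimes 1\in B$, not that the $\rM_n$-parts of the corners agree. Your ``cleaner route'' of conjugating back is circular --- writing $e_{ij}\otimes 1 = u^*(e_{ij}\otimes u_iu_j^*)u$ still requires $e_{ij}\otimes u_iu_j^*$ to normalise the diagonal, which is the same unproven claim. What is missing is precisely a normaliser of $B$ that links the $j$-th corner to the $i$-th corner, and producing such a normaliser from regularity is exactly the content of the lemma and of the paper's argument. In fact, Proposition~\ref{prop:unique-cartan-one-sided-dimension-drop} later carries out something very close to your plan, but it \emph{uses} the present lemma to synchronise the corner diagonalisations (setting $u_i=(e_{i1}\otimes 1)u_1(e_{1i}\otimes 1)$ rather than choosing the $u_i$ independently); reversing that dependency does not work.
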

\begin{proof}
  Let $i,j \in \{1, \dotsm, m\}$ and $i \neq j$.  Since $\lspan \rN_{J_{m,n}}(B) \subset J_{m,n}$ is dense, $\lspan (e_{ii} \ot 1) \rN_{J_{m,n}} (B) (e_{jj} \ot 1)$ is dense in $(e_{ii} \ot 1) J_{m,n} (e_{jj} \ot 1)$.  It follows that 
  \begin{equation*}
    \{x_0 ~|~ x \in \lspan (e_{ii} \ot 1)\rN_{J_{m,n}}(B)(e_{jj} \ot 1)\} =  \CC (e_{ij} \ot 1)
    \eqstop
  \end{equation*}
  So there is some element $x \in (e_{ii} \ot 1)\rN_{J_{m,n}}(B)(e_{jj} \ot 1)$ such that $x_0 = c (e_{ij} \ot 1)$ for some $c \neq 0$.  Since $\rN_{J_{m,n}}(B)$ is closed under scalar multiplication, we can assume that $c = 1$.  

  Since $e_{ii} \ot 1, e_{jj} \ot 1 \in B \subset \rN_{J_{m,n}}(B)$, we actually have $x \in \rN_{J_{m,n}}(B)$.  In particular, $x^*x \in B $ and hence $|x| \in B$. Note that actually $|x| \in (e_{jj} \ot 1) B (e_{jj} \ot 1)$.  By continuity, we find some $\veps > 0$ such that $\| |x|_t - (e_{jj} \ot 1) \| < 1/2$ for all $t \in [0,2\veps]$.  Let $f \in \cont([0,1])$ be a function satisfying $f|_{[0,\veps]} \equiv 1$ and $f|_{[2\veps, 1]} \equiv 0$, and consider $v = x |x|^{-1}f \in \rN_{J_{m,n}}(B)$, where $|x|^{-1} \in (e_{jj} \ot 1) B (e_{jj} \ot 1)$ is the inverse of $|x|$ on $[0,2\veps]$ and the expression $|x|^{-1}f$ is understood to be equal to $0$ on $[2\veps, 1]$.   We obtain that for all $t \in [0,\veps]$, $v_t$ is a partial isometry in $(e_{ii} \ot 1) (\rM_m \ot \rM_n) (e_{jj} \ot 1)$ such that $v_t^*v_t = e_{jj} \ot 1$.  It follows that $v_t v_t^* = e_{ii} \ot 1$, and hence there exists a unitary $u \in C([0,1])$ such that $v_t = u_t (e_{ij} \otimes 1)$ for all $t \in [0,\veps]$.  Since $B$ contains $\cont([0,1])$, also $u^*v \in \rN_{J_{m,n}}(B)$.  This finishes the proof of the lemma.
\end{proof}

 \begin{proposition}
  \label{prop:unique-cartan-one-sided-dimension-drop}
  The one-sided dimension drop algebras have a unique non-degenerate Cartan subalgebra up to conjugacy.
\end{proposition}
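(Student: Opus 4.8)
The plan is to prove that every non-degenerate Cartan subalgebra $B \subseteq J_{m,n}$ is carried onto the standard Cartan subalgebra $C_{m,n}$ of Example~\ref{ex:cartan-one-sided-dimension-drop} by some automorphism of $J_{m,n}$; since $C_{m,n}$ is itself non-degenerate ($\dim (C_{m,n})_0 = m$), this gives the stated uniqueness. I would move $B$ onto $C_{m,n}$ in stages, first near the endpoint $0$ and then on the complementary interval, using throughout the standard facts that restrictions and ideal quotients of a Cartan pair are again Cartan pairs and that a corner of a Cartan pair by a projection lying in the Cartan subalgebra is a Cartan pair. \emph{Step 1 (straightening at $0$ and producing matrix units nearby).} Non-degeneracy means $B_0 \subseteq \rM_m \ot 1$ is a MASA, so after conjugating $B$ by a constant unitary $w \ot 1$ I may assume $B_0 = \rD_m \ot 1$. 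Choosing a self-adjoint $f \in B$ with $f(0) = \sum_i \lambda_i e_{ii} \ot 1$ for pairwise distinct $\lambda_i$ and applying continuous functional calculus on a small interval $[0,\veps]$ (where $\spec f(t)$ stays in disjoint neighbourhoods of the $\lambda_i$) yields orthogonal projections $p_1, \dotsc, p_m \in B|_{[0,\veps]}$ with $\sum_i p_i = 1$, $p_i(0) = e_{ii} \ot 1$ and $\tr(p_i(t)) = n$ for all $t$. The path $t \mapsto (p_i(t))_i$ of ordered orthogonal decompositions of $\CC^{mn}$ into $m$ blocks of size $n$ starts at the standard one; lifting it through the fibre bundle $\cU(mn) \to \cU(mn)/\cU(n)^m$ gives $v \in \cont([0,\veps],\cU(mn))$ with $v(0) = 1$ and $v(t)(e_{ii}\ot1)v(t)^* = p_i(t)$. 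Extending $v$ constantly to $[0,1]$, the automorphism $\Ad v^*$ of $J_{m,n}$ moves $B$ so that $B|_{[0,\veps]}$ contains the constant functions $e_{ii}\ot1$; rescaling $[0,\veps]$ to $[0,1]$ and invoking Lemma~\ref{lem:partial-isometries-in-normaliser}, after shrinking $\veps$ I may also assume $e_{ij}\ot1 \in \rN_{J_{m,n}|_{[0,\veps]}}(B|_{[0,\veps]})$ for all $i,j$.

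\emph{Step 2 (identifying $B$ with $C_{m,n}$ on $[0,\veps]$).} Put $q_i = e_{ii}\ot1 \in B|_{[0,\veps]}$. The corner $q_i(J_{m,n}|_{[0,\veps]})q_i$ is canonically isomorphic to $J_{1,n}|_{[0,\veps]}$ via $e_{ii}\ot X \mapsto X$, with corner Cartan subalgebra $B^{(i)} := q_i B|_{[0,\veps]}$, and conjugation by the normaliser $e_{i1}\ot1$ identifies the corner at $q_1$ with the corner at $q_i$ compatibly with these isomorphisms while carrying $B^{(1)}$ onto $B^{(i)}$. Hence all $B^{(i)}$ yield one and the same Cartan subalgebra $D$ of $J_{1,n}$, and Lemma~\ref{lem:unique-masa-one-sided-dimension-drop} supplies a unitary $u_0 \in \cont_b((0,\veps],\cU(n))$ whose induced automorphism of $J_{1,n}|_{[0,\veps]}$ takes $D$ to the standard Cartan $C_{1,n}|_{[0,\veps]}$. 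Extending $u_0$ boundedly to $(0,1]$ and setting $U = 1_m \ot u_0$, the scalar first leg guarantees that $\Ad U$ still induces a genuine automorphism of $J_{m,n}$ across the endpoint $0$; on $[0,\veps]$ it respects the decomposition into the corners at the $q_i$ and acts as $\Ad u_0$ on each, so after replacing $B$ by $\Ad U(B)$ I have $B|_{[0,\veps]} = C_{m,n}|_{[0,\veps]}$.

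\emph{Step 3 (correcting the remaining interval).} Now $B|_{[\veps,1]}$ is a Cartan subalgebra of $J_{m,n}|_{[\veps,1]} = \cont([\veps,1],\rM_{mn})$, so Corollary~\ref{cor:li-renault-unitary} provides a unitary $w \in \cont([\veps,1],\rM_{mn})$ conjugating it to $C_{m,n}|_{[\veps,1]}$. Since $B_\veps = \rD_{mn}$, the unitary $w(\veps)$ normalises $\rD_{mn}$; replacing $w$ by $w\,w(\veps)^{-1}$ keeps this property and makes $w(\veps) = 1$, so extending $w$ by $1$ on $[0,\veps]$ defines an automorphism of $J_{m,n}$ that fixes $B$ on $[0,\veps]$ and sends $B|_{[\veps,1]}$ onto $C_{m,n}|_{[\veps,1]}$. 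The image of $B$ is then contained in $C_{m,n}$; being a MASA in $J_{m,n}$ just like $C_{m,n}$, it equals $C_{m,n}$. Composing all the automorphisms used finishes the argument.

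I expect the crux to be Step 2. The naive route of straightening each corner separately by Lemma~\ref{lem:unique-masa-one-sided-dimension-drop} fails: the resulting unitaries $u^{(i)}$ have uncontrolled relative behaviour as $t \to 0$, so the block-diagonal assembly $\sum_i e_{ii}\ot u^{(i)}$ does not extend to an automorphism of $J_{m,n}$ at the endpoint. It is precisely the matrix units $e_{ij}\ot1 \in \rN(B)$ near $0$, furnished by Lemma~\ref{lem:partial-isometries-in-normaliser}, that pin all the corner Cartan subalgebras to a common $D$ and thereby permit the single tensorial unitary $1_m \ot u_0$, whose $\Ad$ is well defined on all of $J_{m,n}$. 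A secondary point needing care is the routine bookkeeping that every restriction, corner and ideal quotient of $B$ appearing above is still a Cartan subalgebra.
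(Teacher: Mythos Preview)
Your argument is essentially the paper's proof: the paper also straightens $B_0$ to $\rD_m\ot 1$, lifts the projections to constants $e_{ii}\ot 1$ near $0$, applies Lemma~\ref{lem:unique-masa-one-sided-dimension-drop} to the first corner to get a partial isometry $u_1=e_{11}\ot u_0$, transports it via $u_i=(e_{i1}\ot 1)u_1(e_{1i}\ot 1)$ to obtain exactly your $U=1_m\ot u_0$, uses Lemma~\ref{lem:partial-isometries-in-normaliser} to see this conjugates $B$ onto $C_{m,n}$ near $0$, and finishes with Li--Renault on the remaining interval. One small slip in your Step~3: to keep $\tilde w B\tilde w^*=C_{m,n}|_{[\veps,1]}$ you must multiply on the \emph{left}, $\tilde w=w(\veps)^{-1}w$, since $w(\veps)$ normalises $\rD_{mn}$ but not the fibres $B_t$ for $t>\veps$.
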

\begin{proof}
  Let $B \subset J_{m,n}$ be a non-degenerate Cartan subalgebra.  We find a unitary $u \in J_{m,n}$ such that $(u B u^*)_0 = \rD_m \ot 1$. By semiprojectivity of $\CC^m$, there exists $\veps \in (0,1/3]$ and projections $f_1, \dotsc, f_m \in (J_{m,n})_{[0,\veps]}$ such that $f_1(0), \dotsc, f_m(0)$ are the minimal projections of $(u B u^*)_0$ and $\|f_i(0) - f_i(t)\| < 1$ for all $i \in \{1, \dotsc, m\}$ and all $t \in [0, \veps]$. Then $f_i(0) f_i(t) f_i(0)$ is invertible in $f_i(0) (\rM_m \ot \rM_n) f_i(0)$ and hence the polar decomposition of the function $t \mapsto f_i(0) f_i(t)$, $t \in [0,\veps]$, provides us with partial isometries $v_1, \dotsc, v_m \in (J_{m,n})_{[0,\veps]}$ with support projection $f_i$ and range projection $\mathbb 1_{[0,\veps]} f_i(0)$.  The unitary $v = v_1 + \dotsm + v_m \in (J_{m,n})_{[0,\veps]}$ satisfies $v u B_{[0,\veps]} u^* v^* \subset \cont([0, \veps], \rD_m \ot 1)$.  We can extend $v$ to a unitary in $J_{m,n}$ such that $v u|_{[2\veps, 1]} = 1$.  Hence, from now on we may assume that there is $\veps \in (0, 1]$  such that $B_{[0,\veps]}$ contains the constant function with value $e_{ii} \ot 1$ for all $i \in \{1, \dotsc, m\}$.

As $(e_{11} \ot 1)(J_{m, n})_{[0, \veps]} (e_{11} \ot 1) \cong J_{1,n}$, Lemma~\ref{lem:unique-masa-one-sided-dimension-drop} applies to its Cartan subalgebra $(e_{11} \ot 1) B_{[0,\veps]}$. We thus obtain a partial isometry $u_1 \in \cont_b((0,\veps],\rM_m \otimes \rM_n)$ whose support and range projection equal $e_{11} \ot 1$, which normalises $(J_{m, n})_{[0, \veps]}$ and satisfies 
\begin{equation*}
u_1 B_{[0,\veps]} u_1^* = (e_{11} \otimes 1) (C_{m,n})_{[0, \veps]}.
\end{equation*}
Put $u_i = (e_{i1} \ot 1) u_1 (e_{1i} \ot 1)$ and $u = u_1 + \dotsm + u_m \in \cU(\cont_b((0,\veps],\rM_m \otimes \rM_n))$. By construction, $\Ad u$ defines an automorphism of $(J_{m, n})_{[0, \veps]}$.  By making $\veps > 0$ smaller if necessary, we may assume by Lemma \ref{lem:partial-isometries-in-normaliser} that $e_{i1} \ot 1$ and $e_{1i} \ot 1$ are elements of $\rN_{(J_{m,n})_{[0,\veps]}}(B_{[0, \veps]})$. It therefore follows that $u B_{[0,\veps]} u^* = (C_{m,n})_{[0,\veps]}$. We can now extend $\Ad u$ to an automorphism of $J_{m,n}$ which is the identity on $[2\veps, 1]$.  Conjugating $B$ by this automorphism, we may assume that $B_{[0,\veps]} = (C_{m,n})_{[0,\veps]}$.

Let $0 < \delta < \veps$.  By Theorem \ref{thm:li-renault}, there is a unitary $u \in \cont([\delta, 1],\rM_m \otimes \rM_n)$ such that $u B_{[\delta, 1]}u^* = \cont([\delta, 1], \rD_m \ot \rD_n)$.  Then 
\begin{equation*}
  u|_{[\delta, \veps]}
  \in
  \cN_{\cont([\delta, \veps], \rM_m \ot \rM_n)}(\cont([\delta, \veps], \rD_m \ot \rD_n))
  =
  \cont([\delta, \veps],\cN_{\rM_m \ot \rM_n}(\rD_m \ot \rD_n))
  \eqstop
\end{equation*}
Hence, multiplying $u$ with some unitary from $\cont([\delta, 1], \cN_{\rM_m \ot \rM_n}(\rD_m \ot \rD_n))$, we may assume that $u|_{[\delta, \veps]} \equiv 1$.  Then $u$ extends to a unitary in $J_{m,n}$ that satisfies $u|_{[0,\veps]} \equiv 1$.  This unitary conjugates $B$ onto the standard Cartan subalgebra of $J_{m,n}$.
\end{proof}


\section{Non-degenerate Cartan subalgebras}
\label{sec:non-degenerate-cartan}

The main purpose of this section is to prove Theorem \ref{thm:cartans-non-degenerate} characterising those stabilised dimension drop algebras, in which every Cartan subalgebra is non-degenerate.
We start with a proposition characterising non-degenerate Cartan subalgebras as \Cstar-diagonals in the sense of Kumjian \cite{kumjian86}.
\begin{proposition}
  \label{prop:cstar-diagonals}
  Let $A \subset I_{m,n,o}$ be a Cartan subalgebra of a stabilised dimension drop algebra.  Then $A$ is non-degenerate if and only if it is a \Cstar-diagonal.
\end{proposition}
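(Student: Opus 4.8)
The plan is to invoke the equivalence ``$A$ is a \Cstar-diagonal $\iff$ $A$ is a Cartan subalgebra with the unique extension property'' in the sense of Anderson \cite{anderson79} (which is what Kumjian's axioms for a \Cstar-diagonal amount to, see \cite{kumjian86}), so that it suffices to prove: $A$ is non-degenerate if and only if every pure state of $A$ extends uniquely to a state of $I_{m,n,o}$.

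First I would localise the problem over $[0,1]$. Since $I_{m,n,o}$ is unital with centre $Z = \cont([0,1])\cdot 1$ and $A$ is a MASA containing an approximate unit, we have $Z \subset A$; hence every character $\varphi$ of $A$ restricts to evaluation $\ev_t$ at a unique $t \in [0,1]$ on $Z$, and every state $\psi$ of $I_{m,n,o}$ extending $\varphi$ does too. As $\{g \in I_{m,n,o} \mid g(t)=0\}$ is the closed ideal generated by $\{f \in Z \mid f(t)=0\}$ (and the analogous statement holds inside $A$), a routine Cauchy--Schwarz argument shows that $\varphi$ factors through the fibre, $\varphi = \omega \circ q_t$ for a pure state $\omega$ of $A_t$, and that the extensions of $\varphi$ to $I_{m,n,o}$ are exactly the states $\bar\psi \circ \ev_t$ with $\bar\psi$ a state of $(I_{m,n,o})_t$ extending $\omega$. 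Consequently $A$ has the unique extension property in $I_{m,n,o}$ if and only if, for every $t \in [0,1]$, the abelian subalgebra $A_t$ has the unique extension property in the matrix algebra $(I_{m,n,o})_t$ -- and for a finite-dimensional inclusion $D \subset \rM_N$ this is classically equivalent to $D$ being a MASA, i.e.\ to $\dim D = N$.

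It then remains to determine when $A_t$ is a MASA of $(I_{m,n,o})_t$ for all $t$. For interior $t \in (0,1)$ this is automatic: $A|_{(0,1)}$ is a Cartan subalgebra of the ideal $I_{m,n,o}|_{(0,1)} \cong \conto((0,1), \rM_{mno})$ (the restriction of a Cartan subalgebra to a closed two-sided ideal is again a Cartan subalgebra of that ideal), so Corollary~\ref{cor:li-renault-unitary} identifies it with the standard Cartan subalgebra, whose fibres are MASAs. At the endpoints one has $(I_{m,n,o})_0 = \rM_{mo}$ and $(I_{m,n,o})_1 = \rM_{no}$, so $A_t$ is a MASA there precisely when $\dim A_0 = mo$ and $\dim A_1 = no$ -- that is, precisely when $A$ is non-degenerate in the sense of Definition~\ref{def:non-degenerate}. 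Combining this with the previous paragraph proves the proposition. The step I expect to require the most care is the localisation in the second paragraph -- pinning down that a pure state of $A$, and each of its extensions to $I_{m,n,o}$, is supported over a single point $t \in [0,1]$, so that the problem genuinely becomes fibrewise -- after which everything reduces to elementary linear algebra together with the Li--Renault input (Corollary~\ref{cor:li-renault-unitary}) for the interior fibres.
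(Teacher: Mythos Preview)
Your proposal is correct and follows essentially the same strategy as the paper: reduce the unique extension property to a fibrewise statement via the decomposition of pure states over $[0,1]$, and then observe that $A_t \subset (I_{m,n,o})_t$ has the unique extension property if and only if it is a MASA. The paper's proof is terser and leaves implicit the fact that the interior fibres $A_t$, $t \in (0,1)$, are automatically MASAs; your explicit appeal to Corollary~\ref{cor:li-renault-unitary} to verify this is a welcome clarification, not a different route.
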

\begin{proof}
Pure states on $I_{m,n,o}$ are exactly of the form $\vphi \circ \operatorname{ev}_t$ for some $t \in [0,1]$ and some pure state $\vphi$ of $(I_{m,n,o})_t$. Similarly, pure states on $A$ are all of the form $\psi \circ \operatorname{ev}_t$ for some $t \in [0,1]$ and some pure state $\psi$ of $A_t$. The claim now follows as $A$ is a non-degenerate Cartan subalgebra of $I_{m,n,o}$ if and only if $A_t \subset (I_{m,n,o})_t$ has the unique extension property for all $t \in [0,1]$.
\end{proof}

\begin{lemma}
  \label{lem:completely-degenerate-cartan}
  Let $B \subset J_{m,n}$ be a Cartan subalgebra such that $B_0 = \CC$.  Then $m$ divides $n$.
\end{lemma}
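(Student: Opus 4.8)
The plan is to study the inclusion $B\subset J_{m,n}$ near $t=0$, pass to a limiting configuration as $t\to0$, and read off the divisibility from a character‑theoretic computation. For the local picture: exactly as in the proof of Lemma~\ref{lem:unique-masa-one-sided-dimension-drop}, $B_{(0,1]}$ is a Cartan subalgebra of $\cont_0\big((0,1],\rM_m\ot\rM_n\big)$, so Corollary~\ref{cor:li-renault-unitary} shows that every fibre $B_t\subset\rM_m\ot\rM_n$ with $t>0$ is a MASA; I fix unitaries $u_t\in\cU(mn)$, continuous in $t\in(0,1]$, with $B_t=u_t^*(\rD_m\ot\rD_n)u_t$, where $q_1,\dots,q_{mn}$ denote the minimal projections of $\rD_m\ot\rD_n$. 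Since $\cont([0,1])=Z(J_{m,n})\subseteq B$, the faithful conditional expectation $E\colon J_{m,n}\to B$ is $\cont([0,1])$‑linear, hence decomposes fibrewise, and over $(0,1]$ its fibre map is the unique conditional expectation $E_{B_t}$ of $\rM_m\ot\rM_n$ onto the MASA $B_t$. Choosing $t_k\to0$ with $u_{t_k}\to u_\infty$, the rank‑one projections $p_i^{(t_k)}:=u_{t_k}^*q_iu_{t_k}$ converge to the minimal projections $p_i^\infty:=u_\infty^*q_iu_\infty$ of a MASA $D_\infty\subset\rM_m\ot\rM_n$; write $\xi_i$ for a unit vector with $p_i^\infty=|\xi_i\rangle\langle\xi_i|$.

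Next I would exploit the remaining two Cartan axioms at the endpoint $0$. For $f\in J_{m,n}$ with $f(0)=a\ot 1\in\rM_m\ot 1$, continuity of $E(f)$ and $B_0=\bC$ give $\bC\ni E(f)(0)=\lim_k E_{B_{t_k}}\!\big(f(t_k)\big)=\sum_{i}\tr\!\big((a\ot 1)p_i^\infty\big)p_i^\infty$; comparing coefficients (the $p_i^\infty$ are linearly independent and sum to $1$) forces $\tr\!\big((a\ot 1)p_i^\infty\big)=\tfrac1m\tr(a)$ for all $i$ and all $a\in\rM_m$, i.e.\ the reduced density matrix of $\xi_i$ on $\bC^m$ (the partial trace of $p_i^\infty$ over the second factor) is $\tfrac1m 1_m$. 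Hence $\xi_i=\tfrac1{\sqrt m}\sum_{a=1}^m e_a\ot W_ie_a$ for an isometry $W_i\colon\bC^m\to\bC^n$, and orthonormality of the $\xi_i$ translates into: the scaled isometries $A_i:=\tfrac1{\sqrt m}W_i$ form an orthonormal basis of $\Cmatnm{n}{m}$ for the inner product $\langle X,Y\rangle=\tr(X^*Y)$. For regularity, note $x^*x\in B$ for every $x\in\rN_{J_{m,n}}(B)$, so $x(0)^*x(0)\in B_0=\bC$ and therefore $x(0)=c\,(v\ot 1)$ with $v\in\cU(m)$, $c\geq0$; since $x(t_k)$ normalises $B_{t_k}$, passing to the limit shows $(v\ot 1)p_i^\infty(v\ot 1)^*$ is again a minimal projection of $D_\infty$, so $v\ot 1\in\rN_{\rM_m\ot\rM_n}(D_\infty)$. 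As $\ev_0\big(\rN_{J_{m,n}}(B)\big)$ spans $\rM_m$ densely, the closed subgroup $V:=\{v\in\cU(m)\mid v\ot 1\in\rN_{\rM_m\ot\rM_n}(D_\infty)\}$ of $\cU(m)$ satisfies $\lspan V=\rM_m$, so by Burnside's theorem $\bC^m$ is an irreducible $V$‑module.

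Finally I would cash this in combinatorially. Conjugation by $v\ot 1$ permutes the lines $\bC\xi_i$, and a direct computation shows that under the correspondence $\xi_i\leftrightarrow W_i$ this action becomes $W\mapsto Wv^{\rmt}$, so $V$ permutes the $mn$ basis lines $\bC A_i$ of $\Cmatnm{n}{m}$ via $A\mapsto Av^{\rmt}$. Since every $W_i$ is injective, the $V$‑stabiliser of each line $\bC A_i$ equals $V\cap\bT 1_m$, so all $V$‑orbits have the same size $N=[V:V\cap\bT 1_m]$. On the other hand distinct basis vectors are orthogonal, and because $\langle A_1v_1^{\rmt},A_1v_2^{\rmt}\rangle=\tfrac1m\tr\!\big(v_2v_1^{-1}\big)$, this orthogonality says precisely that $\tr(v)=0$ for every $v\in V\setminus\bT 1_m$; combined with $\int_V|\tr v|^2\,\mathrm{d}v=1$ (Schur orthogonality for the irreducible module $\bC^m$) this forces $N=m^2$. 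Thus the $mn$ lines $\bC A_i$ split into $V$‑orbits of size $m^2$, so $m^2\mid mn$ and hence $m\mid n$.

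I expect the main obstacle to be the limiting step: one must pass to the limit simultaneously in the fibres $B_t$, in the conditional expectation, and in every normaliser of $B$, and then identify the limit as a maximally entangled orthonormal basis together with an irreducible unitary subgroup $V\leq\cU(m)$ — in particular verifying that the conditional expectation localises at $0$ so as to force the maximal‑entanglement condition, and that normalisers of $B$ really do limit onto normalisers of $D_\infty$. Once that configuration is in hand, the decisive ingredient is the last paragraph, where regularity (irreducibility of $V$) and orthogonality of the basis (vanishing traces of non‑central elements of $V$) combine through Schur orthogonality to pin the orbit size down to $m^2$.
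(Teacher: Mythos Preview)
Your proof is correct and follows essentially the same route as the paper: both pass to a limiting MASA $D_\infty\subset\rM_m\ot\rM_n$ along a sequence $t_k\to 0$, use $B_0=\CC$ to see that the conditional expectation onto $D_\infty$ restricts to the trace on $\rM_m\ot 1$ (your maximal-entanglement condition), and use regularity to produce a group $V\leq\cU(m)$ spanning $\rM_m$ and normalising $D_\infty$. The only cosmetic difference is the final count: you invoke Schur orthogonality on $V$ to get $[V:V\cap\bT]=m^2$, while the paper works directly with the cyclic subspaces $H_{ij}=(\rM_m\ot 1)\xi_{ij}$ and shows they are $m^2$-dimensional and pairwise equal or orthogonal --- both hinge on the same fact that every $v\in V\setminus\bT$ has trace zero.
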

\begin{proof}
Let $\rE: J_{m,n} \ra B$ be the unique conditional expectation onto $B$.  Using Theorem \ref{thm:li-renault}, one checks that $B_{(0,1]} \subset (J_{m,n})_{(0,1]} \cong \cont((0,1], \rM_m \ot \rM_n)$ is a Cartan subalgebra, so that $B_t \subset (J_{m,n})_t \cong \rM_m \ot \rM_n$ is a MASA for all $t \in (0,1]$. Let $(E_t)_{t \in (0,1]}$ be the fibres of $\rE$, that is, $\rE_t: \rM_m \ot \rM_n \ra \rM_m \ot \rM_n$ is defined by $E_t(f(t)) = E(f)(t)$ for all $f \in C_0((0,1],\rM_m \ot \rM_n)$. For each $t \in (0,1]$, $\rE_t: \rM_m \ot \rM_n \ra \rM_m \ot \rM_n$ is a conditional expectation onto some MASA.  Let $\rF: \rM_m \ot \rM_n \ra \rM_m \ot \rM_n$ be some limit point for this family, as $t$ approaches $0$ via some sequence $(t_k)_{k \in \NN}$. Note that such an $\rF$ always exists due to compactness of the unit ball of $\cB(\rM_m \ot \rM_n)$ and that $\rF$ again is a conditional expectation onto some MASA $C \subset \rM_m \ot \rM_n$.  Further, $F|_{M_m \ot 1} = \tr$ holds.

Since $B \subset J_{m,n}$ is a Cartan subalgebra, its normaliser spans $J_{m,n}$.  In particular $(N_{J_{m,n}}(B))_0$ spans $\rM_m \ot 1$.  But if $x \in N_{J_{m,n}}(B)$, then $x_0^*x_0, x_0x_0^* \in B_0 = \CC 1$, showing that the unitaries in $(N_{J_{m,n}}(B))_0$ already span $\rM_m \ot 1$. However, $(N_{J_{m,n}}(B))_0 \subset N_{\rM_m \ot \rM_n}(C) \cap \rM_m \ot 1$, showing that $\cN_{\rM_m \ot \rM_n}(C) \cap \rM_m \ot 1$ spans $\rM_m \ot 1$ as well.

Write $H = \CC^m \ot \CC^n$ and consider the natural action of $\rM_m \ot \rM_n$ on $H$.  Let $\{\xi_{ij}$ ~|~ $1\leq i \leq m,\  1 \leq j \leq n\}$ be an orthonormal basis of $H$ whose associated orthogonal projections $p_{ij}$ generate the MASA $C$. Define linear subspaces
  \begin{equation*}
  H_{ij} = \{(x \otimes 1)\xi_{ij} ~|~ x \in \rM_m\} \subset H
  \end{equation*}
  for $i\in \{1,\ldots,m\}$ and $j \in \{1,\ldots,n\}$.  Denote by $G = \{ u \in \cU(m) ~|~ u \ot 1 \in \cN_{\rM_m \ot \rM_n}(C)\}$. Observe that for each $u \in G$ and $i, j$ there are some $k,l$ such that $(u \ot 1)\xi_{ij} \in  \bT \xi_{kl}$.   Assume that there are $u_1, u_2 \in G$ such that $(u_1 \ot 1)\xi_{ij} \in \bT(u_2 \ot 1)\xi_{ij}$.  Then $u = u_1 u_2^*$ satisfies $(u \ot 1) \xi_{ij} \in \bT \xi_{ij}$.  It follows that $p_{ij} (u \ot 1) p_{ij} \in \bT p_{ij}$ and hence
  \begin{equation*}
    \tr(u \ot 1) = \rF(u \ot 1) = \sum_{k,l} p_{kl} (u \ot 1) p_{kl} \in \bT
    \eqstop
  \end{equation*}
  In conclusion, $u \in \bT$ meaning that $u_1, u_2$ are linearly dependent.  Since $\lspan G = \rM_m$, we can find a basis $(v_k)_k$ of $\rM_m$ inside $G$.  By the previous calculation, $((v_k \ot 1)\xi_{ij})_k$ is a basis of $H_{ij}$ showing that $\dim H_{ij} = m^2$.  Further, the sum of all $H_{ij}$ coincides with $H$.  Observe also that the equality 
\begin{equation*}
  H_{ij} = \lspan\{(u \otimes 1)\xi_{ij} ~|~ u \in G\} = \lspan\{\xi_{kl} ~|~ \exists u \in G \text{ such that } (u \otimes 1)\xi_{ij} = \xi_{kl}\}
  \end{equation*}
  implies that two subspaces $H_{ij}$ and $H_{kl}$ are either equal or orthogonal. This shows that $m^2$ divides $mn$ and hence $m$ divides $n$.
\end{proof}

We continue with the prototypical example of a degenerate Cartan subalgebra in $J_{m,m}$, which will be an important ingredient in the proof of Theorem \ref{thm:cartans-non-degenerate}.
\begin{example}
  \label{ex:completely-degenerate-cartan}
  We provide an example of a Cartan subalgebra $B \subset \rM_m \ot \rM_m$ such that the conditional expectation $\rE_B: \rM_m \ot \rM_m \ra B$ satisfies $\rE_B|_{\rM_m \ot 1} = \tr$ (i.e. $B$ and $\rM_m \ot 1$ are orthogonal in the sense of Popa \cite{popa83-orthogonal}) and such that $(\rM_m \ot 1) \cap N_{\rM_m \ot \rM_m}(B)$ generates $\rM_m \ot 1$.  Once this algebra is constructed, $\cont([0,1], B) \cap J_{m,m}$ provides an example of a degenerate Cartan subalgebra in $J_{m,m}$.  Let us explicitly check all properties of a Cartan subalgebra:
  \begin{itemize}
  \item The conditional expectation $\rE_B: \rM_m \ot \rM_m \ra B$ induces a conditional expectation $\id_{\cont([0,1])} \ot \rE_B: \cont([0,1], \rM_m \ot \rM_m) \ra \cont([0,1], B)$, which restricts to a faithful conditional expectation $\rE: J_{m,m} \ra \cont([0,1], B) \cap J_{m,m}$ thanks to the orthogonality assumption $\rE_B|_{\rM_m \ot 1} = \tr$.
  \item The algebra $\cont([0,1], B) \cap J_{m,m} \subset J_{m,m}$ is maximally abelian.  Indeed, if $f \in J_{m,m}$ commutes with $\cont([0,1], B) \cap J_{m,m} \subset J_{m,m}$, then $f(t) \in B$ for all $t \in (0,1]$.  By continuity, it follows that $f(0) \in B \cap (\rM_m \ot 1)$.
  \item In order to show that $\cont([0,1], B) \cap J_{m,m}$ is regular in $J_{m,m}$, we first observe that its restriction to $(0,1]$ is a standard Cartan subalgebra, which is hence regular.  It thus suffices to show that every element $x_0 \in (\rM_m \ot 1) \cap N_{\rM_m \ot \rM_m}(B)$ can be extended to some $x \in N_{J_{m,m}}(\cont([0,1], B) \cap J_{m,m})$.  But this is trivial: the constant function with value $x_0$ lies in $N_{J_{m,m}}(\cont([0,1], B)$.
  \end{itemize}

  \noindent Let us now construct $B$.  We write $H = \rM_m$ for the $m \times m$-matrices endowed with the scalar product
  \begin{equation*}
    \langle \hat T , \hat S \rangle
    =
    \tr(S^*T)
    \eqstop
  \end{equation*}
  Let $\pi:\rM_m \ra \bo(H)$ be the $*$-representation induced by left multiplication.  Observe that under the isomorphism between $H$ and $\CC^m \ot \CC^m$ sending $\widehat{e_{ij}}$ to $e_i \otimes e_j$, $\pi$ corresponds to the first tensor factor embedding into $\rM_m \ot \rM_m \cong \bo(H)$.
  
  Let $\lambda \in \bT$ be a primitive m-th root of unity and consider the element $U = \operatorname{diag}(1, \lambda, \lambda^2, \dotsc, \lambda^{m-1}) \in \rM_m$.  Further let $V$ be defined as the permutation matrix associated to the cycle $(1 \, 2 \, \dotsm \, m) \in \Sym(m)$.  Observe the commutation relation
  \begin{equation*}
    UV = \lambda VU
    \eqstop
  \end{equation*}
  Note also that elements of the form $\widehat{U^iV^j}$ for $i,j \in \{1, \dotsc, m\}$ form an orthonormal basis of $H$.  We consider the rank one projections $p_{ij}: H \ra \CC \widehat{U^iV^j}$ and let $B$ be the subalgebra of $\bo(H)$ generated by them.  Note that $B$ is an abelian subalgebra of dimension $m^2$ and hence a MASA. For $x \in \rM_m$ and $\hat{y} \in H$ it holds that
  \begin{equation*}
  	p_{ij}\pi(x)p_{ij}\hat{y}
  	=
  	\langle \widehat{xU^iV^j},\widehat{U^iV^j}\rangle \langle \hat{y},\widehat{U^iV^j}\rangle \widehat{U^iV^j}
  	= \tr(V^{-j}U^{-i} x U^i V^j) p_{ij}\hat{y}
  	= \tr(x)p_{ij}\hat{y}.
  \end{equation*}    
  Hence, the conditional expectation $\rE_B: \bo(H) \ra B$ restricted to $\pi(\rM_m)$ is calculated as
  \begin{equation*}
    \rE_B(\pi(x))
    =
    \sum_{i,j = 1}^m p_{ij} \pi(x) p_{ij}
    =
    \tr(x)
    \eqcomma
  \end{equation*}
  showing that $\rE_B|_{\pi(\rM_m)} = \tr$.  Next note that $\rM_m = \lspan \{U^i V^j \mid i, j \in \{1, \dotsc, m\}\}$ and the commutation relation $UV = \lambda VU$ yields that $\pi(U^iV^j) \in N_B(\bo(H))$ as
  \begin{equation*}
    \pi(U^iV^j) p_{kl} \pi(U^iV^j)^* \in \CC p_{(k+i), (l+j)}
    \eqcomma
  \end{equation*}
where the indices are considered mod $m$.
\end{example}

\begin{example}
  \label{ex:degenerate-cartan-non-coprime}
  Let $m,n \in \NN$ be non-coprime, say $d \neq 1$ divides $m$ and $n$.  By Example \ref{ex:completely-degenerate-cartan}, there is a Cartan subalgebra $B \subset J_{d, d}$ satisfying $\dim B_0 = 1$.  Writing $\rM_m \cong \rM_{\frac{m}{d}} \ot \rM_d$ and $\rM_n \cong \rM_d \ot \rM_{\frac{n}{d}}$, then the same argument as in the beginning of Example \ref{ex:completely-degenerate-cartan} shows that
  \begin{equation*}
    \{ f \in J_{m,n} \mid f(t) \in \rD_{\frac{m}{d}} \ot B_t \ot \rD_{\frac{n}{d}} \} \subset J_{m,n}
  \end{equation*}
is a degenerate Cartan subalgebra, since its dimension at $0$ equals $\frac{mn}{d^2}$.
\end{example}

\begin{theorem}
  \label{thm:cartans-non-degenerate}
  The following two statements are equivalent for a stabilised dimension drop algebra $I_{m,n,o}$.
  \begin{itemize}
  \item $m,n,o$ are pairwise coprime.
  \item Every Cartan subalgebra in $I_{m,n,o}$ is non-degenerate.
   \item Every Cartan subalgebra of $I_{m,n,o}$ is a \Cstar-diagonal, that is, it has the unique extension property.
  \end{itemize}
\end{theorem}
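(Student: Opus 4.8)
Since Proposition~\ref{prop:cstar-diagonals} already identifies the non-degenerate Cartan subalgebras of $I_{m,n,o}$ with its $\Cstar$-diagonals, it suffices to prove that every Cartan subalgebra of $I_{m,n,o}$ is non-degenerate exactly when $m,n,o$ are pairwise coprime. The plan rests on two reductions. First, $m,n,o$ fail to be pairwise coprime if and only if $\gcd(mo,n)>1$ or $\gcd(no,m)>1$. Second, permuting tensor legs identifies a neighbourhood of $t=0$ in $I_{m,n,o}$ with a neighbourhood of $t=0$ in the one-sided dimension drop algebra $J_{mo,n}$ (it carries $\rM_m\ot1\ot\rM_o$ onto $\rM_{mo}\ot1$), and likewise a neighbourhood of $t=1$ with a neighbourhood of $t=0$ in $J_{no,m}$. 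I will also use that restricting a Cartan pair to a closed subinterval $[0,\veps]$, and cutting a Cartan pair down by a projection of the diagonal, again produce Cartan pairs; for maximality of the abelian restriction one multiplies an element commuting with $A|_{[0,\veps]}$ by a cutoff supported in $[0,\veps]$ and uses maximality back in $I_{m,n,o}$.

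Suppose first that coprimality fails. After the reduction above I may assume $\gcd(mo,n)>1$, so I fix a prime $p$ dividing $n$ and one of $m,o$; I treat $p\mid m$, the other cases and the case $\gcd(no,m)>1$ being symmetric near $t=1$. Writing $\rM_m=\rM_{m/p}\ot\rM_p$ and $\rM_n=\rM_p\ot\rM_{n/p}$, let $\beta\subset\rM_p\ot\rM_p$ be the twisted MASA from Example~\ref{ex:completely-degenerate-cartan}, so $\rE_\beta|_{\rM_p\ot1}=\tr$, $\beta\cap(\rM_p\ot1)=\CC$, and the normalisers of $\beta$ contained in $\rM_p\ot1$ generate $\rM_p\ot1$. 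Pick a continuous path $(\beta_t)_{t\in[0,1]}$ of MASAs of $\rM_p\ot\rM_p$ that equals $\beta$ near $t=0$ and equals $\rD_p\ot\rD_p$ near $t=1$ (possible since all MASAs of $\rM_p\ot\rM_p$ are unitarily conjugate and $\cU(p^2)$ is path-connected), and put
\begin{equation*}
  A \;=\; \{\, f\in I_{m,n,o} \mid f(t)\in \rD_{m/p}\ot\beta_t\ot\rD_{n/p}\ot\rD_o \text{ for all } t\in[0,1] \,\}.
\end{equation*}
Then $A$ is a Cartan subalgebra: its fibres over $(0,1)$ are genuine MASAs; the fibrewise expectations $\id\ot\rE_{\beta_t}\ot\id\ot\id$ (continuous in $t$) patch to a conditional expectation which is faithful because $\rE_\beta|_{\rM_p\ot1}=\tr$; and $A$ is regular since near $t=0$ the normalisers of $\beta$ in $\rM_p\ot1$ generate $\rM_p\ot1$, exactly as in Example~\ref{ex:completely-degenerate-cartan}, while near $t=1$ and over compact subintervals of $(0,1)$ the algebra $A$ is conjugate to the standard product Cartan. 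Finally $A_0=\rD_{m/p}\ot\CC\ot\CC\ot\rD_o$, so $\dim A_0=mo/p<mo$ and $A$ is degenerate.

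Conversely, let $A\subset I_{m,n,o}$ be a degenerate Cartan subalgebra, say $\dim A_0<mo$ (the case $\dim A_1<no$ being symmetric after $t\mapsto1-t$). Restricting to $[0,\veps]$ and permuting tensor legs, $A|_{[0,\veps]}$ becomes a Cartan subalgebra of $J_{M,N}|_{[0,\veps]}\cong J_{M,N}$ with $\dim A_0<M$, where $M=mo$ and $N=n$. Let $p_1,\dotsc,p_k$ be the minimal projections of $A_0\subset\rM_M$, of ranks $r_1,\dotsc,r_k$, so $\sum_j r_j=M$ and $k<M$. The decisive point is that regularity forces all $r_j$ equal: every $v\in\rN_{\rM_M}(A_0)$ sends each $p_j$ to a minimal projection of the same rank, so the $\Cstar$-algebra generated by $\rN_{\rM_M}(A_0)$ is a direct sum of full matrix algebras indexed by the distinct rank values, and as $\ev_0$ maps $\rN_{I_{m,n,o}}(A)$ into $\rN_{\rM_M}(A_0)$ with dense linear span in the simple algebra $\rM_M$, there is a single rank value $r=M/k\geq2$. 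Next, lifting $p_1,\dotsc,p_k$ by semiprojectivity of $\CC^k$ to orthogonal projections $\tilde p_j\in A|_{[0,\veps]}$ that sum to the unit, conjugating by a unitary path trivial at $0$ so that $\tilde p_1$ becomes constant, and passing to the corner, one obtains a Cartan pair $\tilde p_1 A|_{[0,\veps]}\tilde p_1\subset\tilde p_1 J_{M,N}|_{[0,\veps]}\tilde p_1$; here the larger algebra is isomorphic to $J_{r,N}$ because $\tilde p_1(0)$ has rank $r$ in the endpoint fibre and rank $rN$ over the interior, and the diagonal fibre at $0$ is $\CC p_1$. Lemma~\ref{lem:completely-degenerate-cartan} then gives $r\mid N$. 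Hence $r\geq2$ divides both $M=mo$ and $N=n$, so $\gcd(mo,n)>1$ and $m,n,o$ are not pairwise coprime; combined with the construction above and Proposition~\ref{prop:cstar-diagonals}, this proves the theorem.

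I expect the converse to be the crux, and within it the equi-rank claim to be the main obstacle: one must pin down precisely which matrix units the normaliser of the abelian — and in general non-maximal — algebra $A_0\subset\rM_M$ can generate, and then verify that regularity at the endpoint, which is only a fibrewise consequence of the global regularity of $A$, already forces a uniform rank profile on $A_0$. The supporting statements that restriction of a Cartan pair to a closed subinterval and compression by a diagonal projection again yield Cartan pairs are secondary, but should be handled with some care; the cutoff-function argument indicated above disposes of maximality, and the remaining axioms are routine.
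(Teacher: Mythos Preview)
Your proof is correct and follows essentially the same route as the paper's. Both directions match: for the construction of a degenerate Cartan when coprimality fails, the paper first builds one in $J_{mo,n}$ via Example~\ref{ex:degenerate-cartan-non-coprime} and then glues it to a standard Cartan over $[\frac12,1]$ by a unitary path, whereas you achieve the same effect in one stroke with the path $(\beta_t)$ of MASAs---these are the same construction in slightly different packaging. For the converse, your equi-rank argument and subsequent corner reduction to Lemma~\ref{lem:completely-degenerate-cartan} coincide with the paper's; your phrasing ``$v$ sends each $p_j$ to a minimal projection of the same rank'' is imprecise (a general normaliser need not be a partial isometry), but the conclusion you draw---that $\Cstar(\rN_{\rM_M}(A_0))$ lies in the rank-graded block decomposition---follows exactly as in the paper from the observation that $p_i v p_j \neq 0$ forces $\operatorname{rank}(p_i)=\operatorname{rank}(p_j)$ via $(p_ivp_j)^*(p_ivp_j)\in\CC p_j$ and $(p_ivp_j)(p_ivp_j)^*\in\CC p_i$.
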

\begin{proof}
By Proposition \ref{prop:cstar-diagonals}, a Cartan subalgebra of $I_{m,n,o}$ is non-degenerate if and only it is a \Cstar-diaogonal.  It hence suffices to prove the equivalence between the first two statements of the theorem.

Assume that $I_{m,n,o}$ contains a degenerate Cartan subalgebra $B$. Possibly replacing $I_{m,n,o}$ by $I_{n,m,o}$, we may assume that $\dim B_0 \neq mo$.  Restricting to the interval $[0,\frac{1}{2}]$, we obtain a degenerate Cartan subalgebra of $J_{mo,n}$.  Since if $mo$ and $n$ are not coprime, then either $m$ and $n$ or $o$ and $n$ are not coprime, it thus suffices to consider one-sided dimension drop algebras in what follows.

  We let $m,n \in \NN$ be arbitrary and assume that $B \subset J_{m,n}$ is a degenerate Cartan subalgebra.  Our aim is to show that $m,n$ are not coprime.  Since $B$ is degenerate, there is a minimal projection $p_0 \in B_0$ with rank strictly bigger than one.  Let $p_0, \dotsc, p_K \in B_0$ be the minimal projections and $k \in \{1, \dotsc, K\}$.  Since the span of $N_{J_{m,n}}(B)$ is dense in $J_{m,n}$, it follows that there is a non-zero element $x \in p_0 N_{\rM_m \ot 1}(B_0) p_k$.  It follows that $x^*x \in p_kB_0 p_k = \CC p_k$ and $xx^* \in p_0 B_0 p_0 = \CC p_0$ are non-zero elements.  So $\operatorname{rank}(p_k)= \operatorname{rank}(p_0)$.  From
  \begin{equation*}
    m = \sum_{k = 0}^K \operatorname{rank}(p_k) = (K + 1)\operatorname{rank}(p_0)
  \end{equation*}
  it follows that $\operatorname{rank}(p_0)$ divides $m$.  Let us write $d = \operatorname{rank}(p_0)$.

  Extending $p_0$ to an element $p \in B$ such that $p|_{[0, \veps]}$ is a projection, we consider $(p B p)_{[0,\veps]} \subset (p J_{m,n} p)_{[0, \veps]}$, which is isomorphic with a Cartan subalgebra $C \subset J_{d,n}$ satisfying $C_0 = \CC1$.  So Lemma \ref{lem:completely-degenerate-cartan} applies to show that $d$ divides $n$.  Since $d \neq 1$, this shows that $m$ and $n$ have a non-trivial common divisor.

  Let us now assume that $m,n,o$ are not pairwise coprime.  Possibly changing the role of $m$ and $n$, we may assume without loss of generality that there is some $d \neq 1$ that divides $mo$ and $n$.  Example \ref{ex:degenerate-cartan-non-coprime} provides us with a degenerate Cartan subalgebra
  \begin{equation*}
    B
    \subset
    \{f \in \cont([0,\frac{1}{2}], \rM_m \ot \rM_n \ot \rM_o) \mid f(0) \in \rM_m \ot 1 \ot \rM_o\}
    \cong
    J_{mo,n}.
  \end{equation*}
  If $u \in \cont([\frac{1}{2}, 1], \rM_m \ot \rM_n \ot \rM_o )$ denotes a unitary such that
  \begin{equation*}
    u_{\frac{1}{2}} (\rD_m \ot \rD_n \ot \rD_o) u_{\frac{1}{2}}^* = B_{\frac{1}{2}}
  \end{equation*}
  and $u_1 = 1$, then
  \begin{equation*}
    \Bigl \{
    f \in \cont([0,1], \rM_m \ot \rM_n \ot \rM_o) \mid
    f(t) \in
    \begin{cases}
      B_t  \eqcomma & \text{if }t \in [0,\frac{1}{2}] \eqcomma \\
      u_t (\rD_m \ot \rD_n \ot \rD_o)u_t^* \eqcomma & \text{if } t \in [\frac{1}{2}, 1) \eqcomma\\
      1 \ot \rM_n \ot \rM_o \eqcomma & \text{if } t = 1 \eqstop
    \end{cases}
    \Bigr \}
  \end{equation*}
  is a degenerate Cartan subalgebra in $I_{m,n,o}$.  
\end{proof}


\section{Twisted standard Cartan subalgebras}
\label{sec:twisted}

In this section, we will parametrise non-degenerate Cartan subalgebras of arbitrary stabilised dimension drop algebras. It turns out that all non-degenerate Cartan subalgebras in stabilised dimension drop algebras $I_{m,n,o}$ are twisted versions of the following standard Cartan subalgebra inherited from the inclusion into $\cont([0,1], \rM_m \ot \rM_n \ot \rM_o)$.
\begin{proposition}
  \label{prop:standard-cartan-properties}
  The \Cstar-subalgebra $B := \cont([0,1], \rD_m \ot \rD_n \ot \rD_o) \cap I_{m,n,o} \subset I_{m,n,o}$ is a Cartan subalgebra.  Moreover, $B$ has no non-trivial projections, or equivalently its spectrum is connected.
\end{proposition}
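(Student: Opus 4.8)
The plan is to verify the three defining properties of a Cartan subalgebra for $B \subset I_{m,n,o}$ in turn — the MASA property, existence of a faithful conditional expectation, and regularity — and then to treat the statement about projections separately; the requirement that $B$ contain an approximate unit for $I_{m,n,o}$ is automatic, since $I_{m,n,o}$ is unital and $1 \in B$. For the MASA property I would argue fibrewise: for $t \in (0,1)$ the fibre $B_t = \rD_m \ot \rD_n \ot \rD_o$ is a MASA in $(I_{m,n,o})_t = \rM_m \ot \rM_n \ot \rM_o$, and at the endpoints $\rD_m \ot 1 \ot \rD_o \subset \rM_m \ot 1 \ot \rM_o$ and $1 \ot \rD_n \ot \rD_o \subset 1 \ot \rM_n \ot \rM_o$ are MASAs. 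Hence any $f \in B' \cap I_{m,n,o}$ has $f(t) \in \rD_m \ot \rD_n \ot \rD_o$ for $t \in (0,1)$, and by continuity $f(0) \in (\rD_m \ot \rD_n \ot \rD_o) \cap (\rM_m \ot 1 \ot \rM_o) = \rD_m \ot 1 \ot \rD_o$, and similarly $f(1) \in 1 \ot \rD_n \ot \rD_o$, so $f \in B$. The conditional expectation is the restriction to $I_{m,n,o}$ of the fibrewise trace-preserving conditional expectation $\cont([0,1], \rM_m \ot \rM_n \ot \rM_o) \ra \cont([0,1], \rD_m \ot \rD_n \ot \rD_o)$: it maps $I_{m,n,o}$ into $B$ because the canonical expectations $\rM_m \ra \rD_m$, $\rM_n \ra \rD_n$, $\rM_o \ra \rD_o$ fix the scalars, so the endpoint conditions are preserved, and it is faithful since it is faithful in every fibre.

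The substantial point is regularity, where I would imitate Example~\ref{ex:cartan-one-sided-dimension-drop}, now collapsing one tensor leg near each of the two endpoints. Given $f \in I_{m,n,o}$ and $\veps > 0$, choose $\delta \in (0,\tfrac{1}{2})$ with $\|f(t) - f(0)\| < \veps$ on $[0,\delta]$ and $\|f(t) - f(1)\| < \veps$ on $[1-\delta,1]$. On $[\delta,1-\delta]$ the inclusion $B_{[\delta,1-\delta]} \subset (I_{m,n,o})_{[\delta,1-\delta]}$ is the standard inclusion $\cont([\delta,1-\delta], \rD_m \ot \rD_n \ot \rD_o) \subset \cont([\delta,1-\delta], \rM_m \ot \rM_n \ot \rM_o)$, so I can choose a finite sum $\sum_r c_r \tilde{f}_r$ within $\veps$ of $f|_{[\delta,1-\delta]}$ in which each $\tilde{f}_r$ is a scalar-valued function times a single matrix-unit tensor $e_{ij} \ot e_{kl} \ot e_{pq}$; such products span $\rM_m \ot \rM_n \ot \rM_o$ and normalise $\rD_m \ot \rD_n \ot \rD_o$ fibrewise. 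I then extend each $\tilde{f}_r$ to the element $f_r \in I_{m,n,o}$ that is affine on $[0,\delta]$ between $\tilde{f}_r(\delta)$ and $(\id \ot \tr \ot \id)(\tilde{f}_r(\delta)) \in \rM_m \ot 1 \ot \rM_o$, equals $\tilde{f}_r$ on $[\delta,1-\delta]$, and is affine on $[1-\delta,1]$ between $\tilde{f}_r(1-\delta)$ and $(\tr \ot \id \ot \id)(\tilde{f}_r(1-\delta)) \in 1 \ot \rM_n \ot \rM_o$, where $\tr$ is the normalised trace, so these maps are unital, hence contractive, and fix $\rM_m \ot 1 \ot \rM_o$, resp.\ $1 \ot \rM_n \ot \rM_o$, pointwise.

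Two checks then finish regularity. First, each $f_r \in \rN_{I_{m,n,o}}(B)$: on $[\delta,1-\delta]$ this is immediate, and on $[0,\delta]$ one computes $(\id \ot \tr \ot \id)(e_{ij} \ot e_{kl} \ot e_{pq})$, which is $0$ for $k \neq l$ — so $f_r$ is then a scalar multiple of the normaliser $e_{ij} \ot e_{kl} \ot e_{pq}$ — and equals a scalar multiple of $e_{ij} \ot 1 \ot e_{pq}$ for $k = l$, in which case $f_r(t) = e_{ij} \ot D(t) \ot e_{pq}$ with $D(t)$ diagonal (a segment between $\tfrac{1}{n}1$ and $e_{kk}$), again a normaliser of $\rD_m \ot \rD_n \ot \rD_o$; moreover $f_r(0) \in \rM_m \ot 1 \ot \rM_o$ normalises $B_0 = \rD_m \ot 1 \ot \rD_o$, and symmetrically on $[1-\delta,1]$ and at $t = 1$. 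Second, $\|f - \sum_r c_r f_r\| \le 3\veps$: this is clear on $[\delta,1-\delta]$, while on $[0,\delta]$ both endpoints of the interpolating segment of $\sum_r c_r f_r$ lie within $2\veps$ of $f(0)$ — using $\|\sum_r c_r \tilde{f}_r(\delta) - f(0)\| < 2\veps$ together with the contractivity of $\id \ot \tr \ot \id$, which fixes $f(0)$ — so the whole segment does by convexity, giving distance $< 3\veps$ to $f$ there; symmetrically on $[1-\delta,1]$. Letting $\veps \to 0$ yields $f \in \cspan \rN_{I_{m,n,o}}(B)$, so $B$ is regular. I expect this regularity step to be the main obstacle, the delicate point being to produce \emph{global} normalisers from local ones by collapsing a tensor leg at each endpoint without destroying the normaliser property; as in Example~\ref{ex:cartan-one-sided-dimension-drop}, it works because a line segment between a diagonal matrix unit $e_{kk}$ and a scalar multiple of $1$ stays inside the diagonal, while a segment from an off-diagonal $e_{kl}$ to $0$ is a scalar ray.

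Finally, for the statement on projections, observe that a projection $p \in B$ is a norm-continuous map from the connected space $[0,1]$ into the finite discrete set of projections of the finite-dimensional abelian algebra $\rD_m \ot \rD_n \ot \rD_o$, hence is constant; the endpoint conditions confine its value to $(\rM_m \ot 1 \ot \rM_o) \cap (1 \ot \rM_n \ot \rM_o) \cap (\rD_m \ot \rD_n \ot \rD_o) = 1 \ot 1 \ot \rD_o$. When $o = 1$ this leaves only $0$ and $1$, so $B$ has no non-trivial projections and its spectrum is connected; for general $o$ the same computation identifies the projections of $B$ with the constant functions $1 \ot 1 \ot q$ for $q \in \rD_o$ a projection, so that $\wh{B}$ has exactly $o$ connected components.
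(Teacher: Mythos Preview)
Your argument for the Cartan subalgebra properties is correct. The MASA and conditional-expectation steps match the paper's. For regularity the paper takes a more modular route: rather than building global normalisers by hand, it uses a partition of unity to reduce to local regularity near each point, and near the endpoints invokes Example~\ref{ex:cartan-one-sided-dimension-drop} directly (the restriction to a small one-sided neighbourhood of $0$ or $1$ \emph{is} a standard Cartan pair in a one-sided dimension drop algebra). Your approach is a two-sided amplification of the explicit approximation in Example~\ref{ex:cartan-one-sided-dimension-drop}; both are valid, and yours has the virtue of being self-contained.

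More importantly, you have caught a genuine error in the second claim of the proposition. The paper asserts that the spectrum of $B$ is connected for all $m,n,o$, and in its proof writes down the quotient of $\ul m \times \ul n \times \ul o \times [0,1]$ by the endpoint identifications and declares it connected. But, as you observe, neither identification alters the $\ul o$-coordinate, so the quotient manifestly has $o$ connected components; equivalently, the constant functions $1 \ot 1 \ot q$ with $q$ a projection in $\rD_o$ are non-trivial projections in $B$ whenever $o \geq 2$. Your computation of the projections is correct, and the proposition's second sentence is only valid for $o = 1$. This slip does not propagate: the only later use of the projection-free claim is Example~\ref{ex:non-unique-Cartan}, which explicitly reduces to $o = 1$ before invoking it.
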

\begin{proof}
  We have to check that $B \subset I_{m,n,o}$ admits a faithful conditional expectation, is a MASA and that it is regular.  First note that the conditional expectation $\cont([0,1], \rM_m \ot \rM_n \ot \rM_o) \ra \cont([0,1], \rD_m \ot \rD_n \ot \rD_o)$ restricts to a faithful conditional expectation $I_{m,n,o} \ra B$.  Further, $B$ is a MASA in $I_{m,n,o}$, since $B_t \subset (I_{m,n,o})_t$ is a MASA for all $t \in [0,1]$.  It remains to show that $B$ is regular in $I_{m,n,o}$.  A straightforward partition of unity argument shows that to this end, it suffices to show that for each $t \in [0,1]$ there is a neighbourhood $V$ of $t$ such that $B|_V \subset (I_{m,n,o})|_V$ is regular.  For $t \in (0,1)$ we can find a neighbourhood $V$ of $t$ such that $B|_V \subset (I_{m,n,o})|_V$ is isomorphic with $\cont([0,1], \rD_m \ot \rD_n \ot \rD_o) \subset \cont([0,1], \rM_m \ot \rM_n \ot \rM_o)$.  So it remains to treat the points $0$ and $1$.  For each of them there is a neighbourhood $V$ such that $B|_V \subset (I_{m,n,o})|_V$ is the standard Cartan subalgebra of a one-sided dimension drop algebra, which is regular by Example \ref{ex:cartan-one-sided-dimension-drop}.

The spectrum of $B$ is homeomorphic with the quotient of $\ul m \times \ul n \times \ul o \times [0,1]$ by the relations $(i, j, k, 0) \sim (i,j', k, 0)$ and $(i,j,k,1) \sim (i', j,k, 1)$ for all $i,i' \in \ul m$, all $j,j' \in \ul n$, and all $k \in \ul o$.  It is hence connected.  Equivalently, $B$ does not have any non-trivial projections.
\end{proof}

\begin{definition}
  \label{def:standard-cartan}
  Let $I_{m,n,o}$ be a stabilised dimension drop algebra.  We call $\cont([0,1], \rD_m \ot \rD_n \ot \rD_o) \cap I_{m,n,o}$ the standard Cartan subalgebra of $I_{m,n,o}$.
\end{definition}

Proposition \ref{prop:standard-cartan-properties} allows us to exhibit a (non-degenerate) Cartan subalgebra of a dimension drop algebra, which is not conjugate to the standard Cartan subalgebra, simply because it contains non-trivial projections.

\begin{example}
  \label{ex:non-unique-Cartan}
  Let $I_{m,n,o}$ be a stabilised dimension drop algebra with $m$ and $n$ not coprime.  Then $I_{m,n,o}$ admits a non-degenerate Cartan subalgebra with a non-trivial projection and hence $I_{m,n,o}$ does not have a unique non-degenerate Cartan subalgebra up to conjugacy by an automorphism.  It suffices to consider the case $m = n$ and $o = 1$.  Since the tensor flip on $\rM_m \ot \rM_m$ is inner, we may find a unitary $u \in \cont([0,1], \rM_m \ot \rM_m)$ such that $u_0 = 1$ and $u_1$ implements the tensor flip.  Then $B = \{f \in I_{m,m} \mid f(t) \in u_t (\rD_m \ot \rD_m) u_t^* \}$ is a Cartan subalgebra of $I_{m,m}$ - this is proven verbatim as is Proposition \ref{prop:standard-cartan-properties}.  Now $u (e_{11} \ot 1) u^* \in B$ is a non-trivial projection.
\end{example}

We are now going to expand the idea of Example \ref{ex:non-unique-Cartan} in order to obtain a finite family of Cartan subalgebras of $I_{m,n,o}$ parametrised by elements of  $\Sym(\ul m \times \ul n \times \ul o)$.  In Theorem \ref{thm:non-conjugate-cartans}, we will classify these up to conjugacy by an automorphism and in Theorem \ref{thm:classification-cartan-dimension-drop} we will show that every non-degenerate Cartan subalgebra of $I_{m,n,o}$ is conjugate to one of the following kinds.
\begin{example}
  \label{ex:twisted-standard-Cartan}
  Let $\sigma \in \Sym(\ul m \times \ul n \times \ul o)$.  We identify $\sigma$ with its permutation matrix and hence a unitary in $\cU(m) \times \cU(n) \times \cU(o)$.  Let $u \in \cont([\frac{1}{3}, \frac{2}{3}], \rM_m \ot \rM_n \ot \rM_o)$ be a unitary such that $u_{\frac{1}{3}} = \sigma$ and $u_{\frac{2}{3}} = 1$.  We set
  \begin{multline*}
    B_{\sigma, u}
    =
    \{
    f \in I_{m,n,o} \mid
    f(t) \in \rD_m \ot \rD_n \ot \rD_o \text{ for } t \in [0,\frac{1}{3}] \cup [\frac{2}{3}, 1] \\
    \text{ and } f(t) \in u_t (\rD_m \ot \rD_n \ot \rD_o) u_t^* \text{ for } t \in [\frac{1}{3}, \frac{2}{3}]
    \}
    \eqstop
  \end{multline*}
  If $v \in \cont([\frac{1}{3}, \frac{2}{3}], \rM_m \ot \rM_n \ot \rM_o)$ is another unitary satisfying $v_{\frac{1}{3}} = \sigma$ and $v_{\frac{2}{3}} = 1$, then $uv^*$ extends to a unitary $w \in I_{m,n,o}$ such that $w|_{[0,\frac{1}{3}] \cup [\frac{2}{3}, 1]} = 1$.  Hence, $B_{\sigma, u} = w B_{\sigma, v} w^*$.  We will henceforth suppress the choice of $u$ and write $B_\sigma$.  Note that $\rD_m \ot \rD_n \ot \rD_o$ is normalised by $\sigma$, so that $B_{\frac{1}{3}} = B_{\frac{2}{3}} = \rD_m \ot \rD_n \ot \rD_o$.  The proof of Proposition \ref{prop:standard-cartan-properties} therefore applies verbatim to show that $B_\sigma$ is a Cartan subalgebra in $I_{m,n,o}$. 
 
    Next, we argue that the choice of $\frac{1}{3}$ and $\frac{2}{3}$ is arbitrary.  For every $0 < t_1 < t_2 < 1$, there is an orientation preserving homeomorphism of $[0,1]$ satisfying $\frac{1}{3} \mapsto t_1$ and $\frac{2}{3} \mapsto t_2$.   It extends to an automorphism $\alpha$ of $I_{m,n,o}$, since it must fix the endpoints of $[0,1]$.  Replacing $\frac{1}{3}$ by $t_1$ and $\frac{2}{3}$ by $t_2$, we can apply the same construction as above to obtain a Cartan subalgebra which is conjugate to $B_\sigma$ by the automorphism $\alpha$.
\end{example}

\begin{definition}
  \label{def:twisted-standard-cartan}
   The Cartan subalgebras of stabilised dimension drop algebras constructed in Example \ref{ex:twisted-standard-Cartan} are called twisted standard Cartan subalgebras of $I_{m,n,o}$.
\end{definition}

Before in Theorem \ref{thm:non-conjugate-cartans} we can classify the Cartan subalgebras constructed in Example \ref{ex:twisted-standard-Cartan}, we need two lemmas for its proof.
\begin{lemma}
  \label{lem:homeomorphism-realised}
  Let $B_\sigma \subset I_{m,n,o}$ be a twisted standard Cartan subalgebra of a stabilised dimension drop algebra.  Let $\vphi \in \operatorname{Homeo}^+([0,1])$ be an orientation preserving homeomorphism of the interval.  Then there is an automorphism $\alpha \in \Aut(B_\sigma \subset I_{m,n,o})$ such that $\alpha|_{\cont([0,1])} = \vphi^*$, that is, $\alpha$ is given by pre-composition with $\vphi$.
\end{lemma}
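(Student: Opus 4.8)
The plan is to realise $\alpha$ as an inner perturbation of the obvious reparametrisation automorphism. First I would note that pre-composition with $\vphi$, namely $\vphi^* \colon f \mapsto f \circ \vphi$, is an automorphism of $I_{m,n,o}$, since $\vphi$ fixes the endpoints of $[0,1]$ and hence respects the boundary conditions defining $I_{m,n,o}$; its inverse is $(\vphi^{-1})^*$. It therefore suffices to produce an automorphism fixing $\cont([0,1])$ pointwise that carries $\vphi^*(B_\sigma)$ back onto $B_\sigma$: composing it with $\vphi^*$ then gives the required $\alpha$, because an automorphism fixing the central subalgebra $\cont([0,1])\cdot 1$ does not alter the action on $\cont([0,1])$. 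I will take this correction to be inner, $\Ad z$ for a suitable unitary $z \in \cU(I_{m,n,o})$.

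To construct $z$ I would first record a fibrewise description of $B_\sigma$. Choosing a unitary path $u$ on $[\tfrac13,\tfrac23]$ with $u_{1/3}=\sigma$ and $u_{2/3}=1$ as in Example~\ref{ex:twisted-standard-Cartan}, define $\wh u \colon [0,1] \to \cU(\rM_m \ot \rM_n \ot \rM_o)$ by $\wh u_t = \sigma$ for $t \in [0,\tfrac13]$, $\wh u_t = u_t$ for $t \in [\tfrac13,\tfrac23]$ and $\wh u_t = 1$ for $t \in [\tfrac23,1]$. This path is continuous because $u_{1/3}=\sigma$ and $u_{2/3}=1$; and since the permutation $\sigma$ normalises $\rD_m \ot \rD_n \ot \rD_o$, one reads off from the definition that $(B_\sigma)_t = \wh u_t\,(\rD_m \ot \rD_n \ot \rD_o)\,\wh u_t^*$ for all $t$, and $B_\sigma = \{ f \in I_{m,n,o} \mid f(t) \in (B_\sigma)_t \text{ for all } t \}$. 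It is then immediate that $\vphi^*(B_\sigma) = \{ f \in I_{m,n,o} \mid f(t) \in (B_\sigma)_{\vphi(t)} \text{ for all } t \}$, so its fibre at $t$ is $\wh u_{\vphi(t)}\,(\rD_m \ot \rD_n \ot \rD_o)\,\wh u_{\vphi(t)}^*$.

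Then I would set $z_t := \wh u_t\, \wh u_{\vphi(t)}^{*}$, a continuous unitary path. Because $\wh u_t = \wh u_{\vphi(t)} = \sigma$ for $t$ near $0$ and $\wh u_t = \wh u_{\vphi(t)} = 1$ for $t$ near $1$, one has $z_t = 1$ near both endpoints, so in particular $z \in \cU(I_{m,n,o})$. By construction $z_t\,(B_\sigma)_{\vphi(t)}\,z_t^* = \wh u_t\,(\rD_m \ot \rD_n \ot \rD_o)\,\wh u_t^* = (B_\sigma)_t$ for every $t$, so $\Ad z$ maps $\vphi^*(B_\sigma)$ fibrewise onto $B_\sigma$ and hence onto $B_\sigma$. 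Finally I would put $\alpha = \Ad z \circ \vphi^*$; then $\alpha \in \Aut(I_{m,n,o})$, $\alpha(B_\sigma) = B_\sigma$, and $\alpha|_{\cont([0,1])} = \vphi^*$.

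I do not expect a genuine obstacle here: the argument rests entirely on the single fact that a permutation matrix normalises the diagonal MASA, together with the normalisations $u_{1/3}=\sigma$, $u_{2/3}=1$, which make $\wh u$ continuous and force $z$ to equal $1$ near the endpoints. In particular no case distinction on $\vphi$ is needed — the construction is insensitive to the position of the transition interval $[\vphi^{-1}(\tfrac13),\vphi^{-1}(\tfrac23)]$ of $\vphi^*(B_\sigma)$ relative to $[\tfrac13,\tfrac23]$. The only routine checks are the continuity of $\wh u$ at $\tfrac13$ and $\tfrac23$ and the verification that $\Ad z$ preserves the boundary conditions of $I_{m,n,o}$.
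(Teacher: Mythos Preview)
Your proof is correct and is essentially the same construction as the paper's: your $z = \wh u \,(\wh u \circ \vphi)^*$ is precisely the paper's $uv^*$ (extended by $1$), and your $\alpha = \Ad z \circ \vphi^*$ coincides with their $\Ad uv^* \circ \vphi^*$. Your presentation is in fact a bit cleaner, since extending $\wh u$ to all of $[0,1]$ at once avoids the paper's auxiliary $\veps$-bookkeeping; the only notational slip is that the displayed equality $(B_\sigma)_t = \wh u_t(\rD_m \ot \rD_n \ot \rD_o)\wh u_t^*$ fails literally at the endpoints (where the fibre drops), though the description $B_\sigma = \{f \in I_{m,n,o} \mid f(t) \in \wh u_t(\rD_m \ot \rD_n \ot \rD_o)\wh u_t^*\ \forall t\}$ you actually use is correct and the argument is unaffected.
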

\begin{proof}
  Let $B_\sigma$ be constructed from the unitary $u \in \cont([\frac{1}{3}, \frac{2}{3}], \rM_m \ot \rM_n \ot \rM_o)$ as described in Example~\ref{ex:twisted-standard-Cartan}.  Let $\veps > 0$ be chosen such that $\veps \leq \min \{ \frac{1}{3}, \vphi^{-1}(\frac{1}{3})\}$ and $\max \{\frac{2}{3}, \vphi^{-1}(\frac{2}{3})\} \leq 1 - \veps$.  We extend $u$ to a unitary $u \in \cont([\veps, 1 - \veps], \rM_m \ot \rM_n \ot \rM_o)$ by putting $u_{[\veps, \frac{1}{3}]} \equiv \sigma$ and $u_{[\frac{2}{3}, 1 - \veps]} \equiv 1$.  Define the unitary ${v \in \cont([\veps, 1 - \veps], \rM_m \ot \rM_n \ot \rM_o)}$ by $v_t = u_{\vphi(t)}$ for all $t \in \vphi^{-1}([\frac{1}{3}, \frac{2}{3}])$ and $v|_{[\veps, \vphi^{-1}(\frac{1}{3})]} \equiv \sigma$ and $v|_{[\vphi^{-1}(\frac{2}{3}), 1 - \veps]} \equiv 1$.  Then we have $(u v^*)_\veps = (u v^*)_{1 - \veps} = 1$, so that we can extend $uv^* $ by $1$ to a unitary in $\cont([0,1], \rM_m \ot \rM_n \ot \rM_o)$.  Put $\alpha = \Ad uv^* \circ \vphi^* \in \Aut(I_{m,n,o})$ and note that $\alpha|_{\cont([0,1])} = \vphi^*$.  We check that $\alpha(B_\sigma) = B_\sigma$.  Let $f \in B_\sigma$.  Then
  \begin{equation*}
    \alpha(f) (t)
    =
    \Ad (uv^*)_t (\vphi^* f)(t)
    =
    \Ad (uv^*)_t f( \vphi(t))
    \eqstop
  \end{equation*}
  Since for $t \in \vphi^{-1}([\veps, 1 - \veps])$ it holds that $f(\vphi(t)) \in u_{\vphi(t)} (\rD_m \ot \rD_n \ot \rD_o) u_{\vphi(t)}^*$, we find that 
  \begin{align*}
    \Ad (uv^*)_t f( \vphi(t))    
    & \in
      u_t v_t^* u_{\vphi(t)} (\rD_m \ot \rD_n \ot \rD_o) u_{\vphi(t)}^* v_t u_t^* \\
    & =
      u_t u_{\vphi(t)}^* u_{\vphi(t)} (\rD_m \ot \rD_n \ot \rD_o) u_{\vphi(t)}^* u_{\vphi(t)} u_t^* \\
    & =
      u_t (\rD_m \ot \rD_n \ot \rD_o) u_t^* 
    \eqcomma
  \end{align*}
  and $\alpha(f) (t) \in \rD_m \ot \rD_n \ot \rD_o$ otherwise.  This finishes the proof of the lemma.
\end{proof}

\begin{notation}
  \label{not:orientation-preserving-automorphism}
  Note that the centre of $I_{m,n,o}$ is isomorphic with $\cont([0,1])$.  Hence, every automorphism $\alpha$ of $I_{m,n,o}$ induces a homeomorphism of $[0,1]$.  We call $\alpha$ orientation preserving, if this homeomorphism of $[0,1]$ is orientation preserving.
\end{notation}

\begin{lemma}
  \label{lem:flip-twisted-standard-cartan}
  Let $\Phi: [0,1] \ra [0,1]$ be the flip defined by $\Phi(t) = 1 - t$, let $\nu \in \Sym(\ul m \times \ul m \times \ul o)$ be the flip on $\ul m \times \ul m \times \ul o$ satisfying $\nu(i,j,k) = (j,i,k)$ and let $v \in \cont([0,1], \rM_m \ot \rM_m \ot \rM_o)$ be the unitary that is constant as a function with value $\nu$.  Then $\alpha = \Ad v \circ \Phi^* \in \Aut(I_{m,m,o})$ and $\Phi^*(B_{\sigma}) = B_{\nu \sigma^{-1} \nu}$ for all $\sigma \in \Sym(\ul m \times \ul m \times \ul o)$.
\end{lemma}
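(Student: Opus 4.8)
The plan is to prove the two parts of the lemma in turn. First I would verify that $\alpha := \Ad v\circ\Phi^*$ really is an automorphism of $I_{m,m,o}$. Taken separately, $\Phi^*$ interchanges the two endpoint fibre conditions of $I_{m,m,o}$ and $\Ad\nu$ carries $\rM_m\ot 1\ot\rM_o$ onto $1\ot\rM_m\ot\rM_o$, so neither preserves $I_{m,m,o}$ --- in particular the second assertion of the lemma is to be read as $\alpha(B_\sigma)=B_{\nu\sigma^{-1}\nu}$. Their composition does preserve $I_{m,m,o}$: for $f\in I_{m,m,o}$ one has $(\alpha f)(t)=\nu f(1-t)\nu^{-1}$, whence $(\alpha f)(0)=\nu f(1)\nu^{-1}\in\nu(1\ot\rM_m\ot\rM_o)\nu^{-1}=\rM_m\ot 1\ot\rM_o$ and symmetrically $(\alpha f)(1)\in 1\ot\rM_m\ot\rM_o$. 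Since $v$ is constant, $\Phi^*$ and $\Ad v$ commute, and $\nu^2=\id$ forces $\alpha^2=\id$; thus $\alpha$ is an involutive $\ast$-automorphism of $I_{m,m,o}$, orientation reversing because it induces $\Phi$ on the centre $\cont([0,1])$.

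To compute $\alpha(B_\sigma)$ I would fix the implementing unitary $u\in\cont([\frac{1}{3},\frac{2}{3}],\rM_m\ot\rM_m\ot\rM_o)$ used to build $B_\sigma$ in Example~\ref{ex:twisted-standard-Cartan}, so $u_{\frac{1}{3}}=\sigma$ and $u_{\frac{2}{3}}=1$, put $w_t:=\nu u_{1-t}\nu^{-1}$ for $t\in[\frac{1}{3},\frac{2}{3}]$, and read off the fibres of $\alpha f$ for $f\in B_\sigma$. Using $(\alpha f)(t)=\nu f(1-t)\nu^{-1}$ together with the fact that $\nu$ normalises $\rD_m\ot\rD_m\ot\rD_o$, one gets $(\alpha f)(t)\in\rD_m\ot\rD_m\ot\rD_o$ for $t\in[0,\frac{1}{3}]\cup[\frac{2}{3},1]$ and $(\alpha f)(t)\in w_t(\rD_m\ot\rD_m\ot\rD_o)w_t^*$ for $t\in[\frac{1}{3},\frac{2}{3}]$. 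Hence $\alpha(B_\sigma)$ is contained in the subalgebra $B_w\subset I_{m,m,o}$ of functions that are diagonal on $[0,\frac{1}{3}]\cup[\frac{2}{3},1]$ and $w_t$-conjugate to $\rD_m\ot\rD_m\ot\rD_o$ on $[\frac{1}{3},\frac{2}{3}]$; the two coincide because $\alpha(B_\sigma)$ is a MASA and $B_w$ is abelian (alternatively, applying the same substitution to $\alpha^{-1}=\alpha$ yields the reverse inclusion).

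It remains to recognise $B_w$ as one of the twisted standard Cartan subalgebras. Here $w_{\frac{1}{3}}=\nu u_{\frac{2}{3}}\nu^{-1}=1$ and $w_{\frac{2}{3}}=\nu u_{\frac{1}{3}}\nu^{-1}=\nu\sigma\nu$ (using $\nu^{-1}=\nu$), so $w$ is of the standard shape but with its two endpoint values exchanged. I would right-translate $w$ by the constant permutation matrix $\nu\sigma^{-1}\nu=w_{\frac{2}{3}}^{-1}$, obtaining $\tilde w_t:=w_t\,\nu\sigma^{-1}\nu$ with $\tilde w_{\frac{1}{3}}=\nu\sigma^{-1}\nu$ and $\tilde w_{\frac{2}{3}}=1$. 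Since permutation matrices normalise $\rD_m\ot\rD_m\ot\rD_o$, the conjugated diagonals satisfy $\tilde w_t(\rD_m\ot\rD_m\ot\rD_o)\tilde w_t^*=w_t(\rD_m\ot\rD_m\ot\rD_o)w_t^*$ for every $t$, so $B_w$ is literally the subalgebra built from the path $\tilde w$; by the independence of a twisted standard Cartan subalgebra from the choice of implementing path (Example~\ref{ex:twisted-standard-Cartan}), this subalgebra is $B_{\nu\sigma^{-1}\nu}$. This gives the asserted identity $\alpha(B_\sigma)=B_{\nu\sigma^{-1}\nu}$.

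The step I expect to be the main obstacle is this last reindexing: after the flip the twisting unitary emerges with its two endpoint values interchanged, and one has to observe that right-translation by a constant permutation matrix absorbs the discrepancy without altering the algebra --- which works precisely because permutation matrices lie in the unitary normaliser of $\rD_m\ot\rD_m\ot\rD_o$ and hence fix it setwise. Everything else is a straightforward substitution, and the tensor flip $v=\nu$ occurs in the statement for the sole purpose of repairing the endpoint fibre conditions of $I_{m,m,o}$ that $\Phi^*$ would otherwise destroy.
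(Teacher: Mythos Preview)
Your proof is correct and follows essentially the same route as the paper: verify the endpoint conditions to see $\alpha\in\Aut(I_{m,m,o})$, compute the fibres of $\alpha(f)$ for $f\in B_\sigma$, and then right-translate the resulting twisting path by the constant permutation $\nu\sigma^{-1}\nu$ to obtain a path with the correct endpoint values (the paper writes this step as inserting $\sigma^{-1}\nu\cdot\nu\sigma=1$ inside the conjugation, which yields literally the same unitary $\nu u_{1-t}\sigma^{-1}\nu$), concluding equality via $\alpha^2=\id$. You also correctly observe that the $\Phi^*(B_\sigma)$ in the statement is a typo for $\alpha(B_\sigma)$.
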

\begin{proof}
  We first show that $\Ad v \circ \Phi^* \in \Aut(\cont([0,1], \rM_m \ot \rM_m \ot \rM_o))$ normalises $I_{m,m,o}$ and hence restricts to the order two automorphism $\alpha \in \Aut(I_{m,m,o})$.  For $f \in I_{m,m,o}$ we have
  \begin{equation*}
    (\Ad v \circ \Phi^* f)(t)
    =
    \nu f(1 - t) \nu
    \in
    \begin{cases}
      \nu (1 \ot \rM_m \ot \rM_o) \nu = \rM_m \ot 1 \ot \rM_o \eqcomma & \text{ if } t = 0 \eqcomma \\
      \nu (\rM_m \ot 1 \ot \rM_o) \nu = 1 \ot \rM_m \ot \rM_o \eqcomma & \text{ if } t = 1
      \eqstop
    \end{cases}
  \end{equation*}
  So indeed, $\Ad v \circ \Phi^* f \in I_{m,m,o}$.

  If $u$ is a unitary defining $B_\sigma$ as in Example \ref{ex:twisted-standard-Cartan} and $f \in B_\sigma$, then $\alpha(f)(t) = \nu f(1 - t) \nu$, so that we obtain
  \begin{equation*}
    \alpha(f)(t)
    \in
    \nu (\rD_m \ot \rD_m \ot \rD_o) \nu
    = \rD_m \ot \rD_m \ot \rD_o
  \end{equation*}
  if $t \in [0,\frac{1}{3}) \cup (\frac{2}{3}, 1]$, and 
  \begin{equation*}
    \alpha(f)(t)
    \in
    \nu u_{1- t} (\rD_m \ot \rD_m \ot \rD_o) u_{1-t}^* \nu
    =
    \nu u_{1- t} \sigma^{-1} \nu (\rD_m \ot \rD_m \ot \rD_o) \nu \sigma u_{1-t}^* \nu
    \eqcomma
  \end{equation*}
  if $t \in [\frac{1}{3}, \frac{2}{3}]$.  Since $\nu u_{1 - \frac{1}{3}} \sigma^{-1} \nu = \nu \sigma^{-1} \nu$ and $\nu u_{1 - \frac{2}{3}} \sigma^{-1} \nu = \nu \sigma \sigma^{-1} \nu = 1$, we see that $\alpha(B_\sigma) \subset B_{\nu \sigma^{-1} \nu}$. Since $\alpha$ is of order two, this implies $\alpha(B_\sigma) = B_{\nu \sigma^{-1} \nu}$. This concludes the proof.
\end{proof}

Before we proceed to prove our next theorem, let us fix some notation for wreath products.
\begin{notation}
  \label{not:wreath-product}
  If $F$ is some group, then $F \wreath \Sym(n) = \bigoplus_{i = 1}^n F \rtimes \Sym(n)$ denotes the wreath product, where $\Sym(n)$ acts by permuting the copies of $F$.   We also use the notation $\Sym(n) \lwreath F = \Sym(n) \ltimes \bigoplus_{i = 1}^n F$ for notational convenience.

  Fixing $m,n,o \in \NN_{\geq 1}$, we obtain an embedding $\Sym(\ul m \times \ul o) \lwreath \Sym(\ul n) \leq \Sym(\ul m \times \ul n \times \ul o)$.  Concretely, an element of $\Sym(\ul m \times \ul o) \lwreath \Sym(\ul n)$ can be described as a pair $(\sigma_1, \sigma_2)$ with $\sigma_1 \in \Sym(\ul m \times \ul o)$ acting on the first and last coordinate of $\ul m \times \ul n \times \ul o$ and $\sigma_2: \ul m \times \ul n \times \ul o \ra \ul n$ is a map such that $\sigma_2(i, \, \cdot \, , k) \in \Sym(\ul n)$ for all $i \in \ul m$, $k \in \ul o$.  This way, $\sigma_1$ acts on the second coordinate of $\ul m \times \ul n \times \ul o$.  Similarly, we obtain an embedding $\Sym(\ul m) \wreath \Sym(\ul n \times \ul o) \leq \Sym(\ul m \times \ul n \times \ul o)$.

  We remark that considering permutations as unitaries, we have the equalities
  \begin{equation*}
    \Sym(\ul m \times \ul o) \lwreath \Sym(\ul n)
    =
    \Sym(\ul m \times \ul n \times \ul o) \cap \cN_{\rM_m \ot \rM_n \ot \rM_n}(\rD_m \ot 1 \ot \rD_o)
  \end{equation*}
  and similarly 
  \begin{equation*}
    \Sym(\ul m) \wreath \Sym(\ul n \times \ul o)
    =
    \Sym(\ul m \times \ul n \times \ul o) \cap \cN_{\rM_m \ot \rM_n \ot \rM_n}(1 \ot \rD_n \ot \rD_o)
    \eqstop
  \end{equation*}
\end{notation}

\begin{theorem}
  \label{thm:non-conjugate-cartans}
  Let $I_{m,n,o}$ be a stabilised dimension drop algebra and $\sigma, \pi \in \Sym(\ul m \times \ul n \times \ul o)$.
  \begin{itemize}
  \item $B_\sigma$ and $B_\pi$ are conjugate by an orientation preserving automorphism of $I_{m,n,o}$ if and only if they define the same element in $\Sym(\ul m \times \ul o) \lwreath \Sym(\ul n) \backslash \Sym(\ul m \times \ul n \times \ul o) / \Sym(\ul m) \wreath \Sym(\ul n \times \ul o)$.
  \item If $m \neq n$, then twisted standard Cartan subalgebras of $I_{m,n,o}$ are classified up to conjugacy by elements of $\Sym(\ul m \times \ul o) \lwreath \Sym(\ul n) \backslash \Sym(\ul m \times \ul n \times \ul o) / \Sym(\ul m) \wreath \Sym(\ul n \times \ul o)$
  \item If $m = n$ and $\nu \in \Sym(\ul m \times \ul m \times \ul o)$ denotes the flip of the first two coordinates, then $B_\sigma$ and $B_\pi$ are conjugate if and only if $\sigma \in (\Sym(\ul m \times \ul o) \lwreath \Sym(\ul n)) \cdot \{\pi, \nu \pi^{-1} \nu\} \cdot (\Sym(\ul m) \wreath \Sym(\ul n \times \ul o))$.
  \end{itemize}
\end{theorem}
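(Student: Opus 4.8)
The plan is to reduce all three assertions to the orientation-preserving statement in the first bullet, prove that, and then read off the rest. For the reduction, recall from Notation \ref{not:orientation-preserving-automorphism} that every automorphism of $I_{m,n,o}$ induces a homeomorphism of the centre $[0,1]$. An orientation-reversing one would swap the endpoints of $[0,1]$ and hence provide an isomorphism $(I_{m,n,o})_0 \cong (I_{m,n,o})_1$, that is $\rM_m \ot \rM_o \cong \rM_n \ot \rM_o$, forcing $m = n$. So if $m \neq n$ every automorphism is orientation preserving and the second bullet is literally the first. If $m = n$, Lemma \ref{lem:flip-twisted-standard-cartan} provides an orientation-reversing automorphism carrying $B_\sigma$ to $B_{\nu \sigma^{-1} \nu}$; since every orientation-reversing automorphism is the composite of this one with an orientation-preserving automorphism, $B_\sigma$ is conjugate to $B_\pi$ if and only if $B_\sigma$ is orientation-preservingly conjugate to $B_\pi$ or to $B_{\nu \pi^{-1} \nu}$, and applying the first bullet to both possibilities yields the third.

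It therefore suffices to prove the first bullet. For the ``only if'' direction, let $\alpha$ be orientation preserving with $\alpha(B_\sigma) = B_\pi$. Precomposing $\alpha$ with the automorphism from Lemma \ref{lem:homeomorphism-realised} (applied to $B_\pi$ and the inverse of the homeomorphism of $[0,1]$ induced by $\alpha$), I may assume $\alpha$ is the identity on $\cont([0,1])$, so $\alpha$ acts fibrewise. On $[\tfrac13, \tfrac23]$ one has $I_{m,n,o}|_{[\tfrac13,\tfrac23]} = \cont([\tfrac13,\tfrac23], \rM_m \ot \rM_n \ot \rM_o)$, whose centre-fixing automorphisms are inner by the lifting argument in the proof of Corollary \ref{cor:li-renault-unitary}, so $\alpha|_{[\tfrac13,\tfrac23]} = \Ad W$ for a norm-continuous unitary path $W$. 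On $[0,\tfrac13]$ and on $[\tfrac23,1]$ both $B_\sigma$ and $B_\pi$ equal the constant algebra $\rD_m \ot \rD_n \ot \rD_o$, so there $\alpha$ normalises $\rD_m \ot \rD_n \ot \rD_o$ fibrewise and, by a continuity argument, acts through a single permutation $g \in \Sym(\ul m \times \ul n \times \ul o)$ on $[0,\tfrac13]$ and a single permutation $h$ on $[\tfrac23,1]$. Continuity of $\alpha$ at $0$ forces $\alpha_0$ to carry $(B_\sigma)_0 = \rD_m \ot 1 \ot \rD_o$ onto $(B_\pi)_0 = \rD_m \ot 1 \ot \rD_o$; since the atom of $\rD_m \ot 1 \ot \rD_o$ lying above the $\rD_m \ot \rD_n \ot \rD_o$-atom $e_{(i,j,k)}$ is $\sum_{j'} e_{(i,j',k)}$, its image $\sum_{j'} e_{g(i,j',k)}$ is again such an atom, i.e.\ $g$ preserves the partition of $\ul m \times \ul n \times \ul o$ underlying $\rD_m \ot 1 \ot \rD_o$; by Notation \ref{not:wreath-product} this says $g \in \Sym(\ul m \times \ul o) \lwreath \Sym(\ul n)$, and the symmetric argument at $1$ gives $h \in \Sym(\ul m) \wreath \Sym(\ul n \times \ul o)$. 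Finally, writing $u, u'$ for the unitaries defining $B_\pi, B_\sigma$ with $u_{\frac{1}{3}} = \pi$, $u_{\frac{2}{3}} = 1$, $u'_{\frac{1}{3}} = \sigma$, $u'_{\frac{2}{3}} = 1$, the identity $\Ad W_t \bigl( u_t (\rD_m \ot \rD_n \ot \rD_o) u_t^* \bigr) = u'_t (\rD_m \ot \rD_n \ot \rD_o) (u'_t)^*$ shows that $(u'_t)^{-1} W_t u_t$ normalises $\rD_m \ot \rD_n \ot \rD_o$ for every $t$, hence has a constant permutation part $q$; comparing permutation matrices at $t = \tfrac13$ gives $g = \pi q \sigma^{-1}$ and at $t = \tfrac23$ gives $h = q$, so $\sigma = g^{-1} \pi h$, and since $\Sym(\ul m \times \ul o) \lwreath \Sym(\ul n)$ is a group this places $\sigma$ and $\pi$ in the same double coset.

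For the ``if'' direction I would run this in reverse. Given $\sigma = g \pi h$ with $g \in \Sym(\ul m \times \ul o) \lwreath \Sym(\ul n)$ and $h \in \Sym(\ul m) \wreath \Sym(\ul n \times \ul o)$, I first use Lemma \ref{lem:homeomorphism-realised} to stagger the twisting intervals of $B_\pi$ and $B_\sigma$ onto disjoint subintervals, so that there are regions on which one of the two Cartan subalgebras is the constant diagonal while the other is already being conjugated by its defining unitary. I then build an orientation-preserving automorphism $\alpha = \Ad W$ from a unitary path $W$ that near $t = 0$ converges to a permutation lift of the image of $g$ under $\Sym(\ul m \times \ul o) \lwreath \Sym(\ul n) \to \Sym(\ul m \times \ul o)$ (an element of $\rM_m \ot 1 \ot \rM_o$, which is exactly what makes $\Ad W$ a genuine automorphism of $I_{m,n,o}$ at the endpoint), interpolates through $\cU(mno)$ so that on the staggered region it carries the constant diagonal onto $\Ad g$ of it, performs the symmetric manoeuvre with $h$ near $t = 1$, and on the middle interval is arranged so that the transition unitary $(u'_t)^{-1} W_t u_t$ is the constant permutation $q = h$; tracking permutation matrices as in the previous paragraph then gives $\alpha(B_\pi) = B_\sigma$. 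I expect this construction to be the main obstacle: a constant-permutation automorphism near an endpoint realises only $\Sym(\ul m \times \ul o)$, so the remaining ``fibrewise $\Sym(\ul n)$'' coordinates of $g$ have to be produced by an honest path of unitaries running through a region where the Cartan is not locally the constant diagonal, and the real work is to verify that such a path simultaneously preserves the relevant twisted standard Cartan and patches to an automorphism of the whole dimension drop algebra.
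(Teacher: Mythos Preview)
Your reductions of the second and third bullets to the first are correct and match the paper's, and your ``only if'' argument for the first bullet is essentially the paper's argument (up to a harmless swap of the roles of $\sigma$ and $\pi$ in the middle computation).

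The problem is your ``if'' direction. You identify as the main obstacle that ``a constant-permutation automorphism near an endpoint realises only $\Sym(\ul m \times \ul o)$'', and you then propose an elaborate staggering construction to produce the remaining $\Sym(\ul n)$-coordinates of $g$. This obstacle is illusory. The key observation, which is the content of the last displayed formula in Notation~\ref{not:wreath-product}, is that \emph{every} element $g \in \Sym(\ul m \times \ul o) \lwreath \Sym(\ul n)$, viewed as a permutation matrix in $\rM_m \ot \rM_n \ot \rM_o$, normalises $1 \ot \rM_n \ot 1$ and hence its commutant $\rM_m \ot 1 \ot \rM_o$. Indeed, such a $g$ permutes the blocks $\{(i,j,k) : j \in \ul n\}$ among themselves, so it sends $e_{(i,j,k)(i,j',k)}$ to another matrix unit of the same shape. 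Consequently the \emph{constant} unitary $v \equiv g$ on $[0,\tfrac13]$ already satisfies $v_0 (\rM_m \ot 1 \ot \rM_o) v_0^* = \rM_m \ot 1 \ot \rM_o$, and similarly the constant $h$ on $[\tfrac23,1]$ normalises $1 \ot \rM_n \ot \rM_o$.

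This makes the ``if'' direction a two-line argument, which is exactly what the paper does: choose any unitary $v \in \cont([0,1], \rM_{mno})$ with $v|_{[0,1/3]} \equiv g$ and $v|_{[2/3,1]} \equiv h$; then $\Ad v$ is an automorphism of $I_{m,n,o}$, preserves $\rD_m \ot \rD_n \ot \rD_o$ on $[0,\tfrac13] \cup [\tfrac23,1]$, and a direct check of the boundary values of $v_t u_t$ on $[\tfrac13,\tfrac23]$ shows $\Ad v(B_\sigma) = B_{g \sigma h^{-1}}$. No staggering, no limiting behaviour of $W$ near the endpoints, and no separate treatment of the ``fibrewise $\Sym(\ul n)$'' part is needed.
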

\begin{proof}
  We first show that $B_\sigma$ depends up to conjugacy by an orientation preserving automorphism only on the class of $\sigma$ in $\Sym(\ul m \times \ul o) \lwreath \Sym(\ul n) \backslash \Sym(\ul m \times \ul n \times \ul o) / \Sym(\ul m) \wreath \Sym(\ul n \times \ul o)$.  Indeed, if $\rho_1 \in \Sym(\ul m \times \ul o) \lwreath \Sym(\ul n)$ and $\rho_2 \in \Sym(\ul m) \wreath \Sym(\ul n \times \ul o)$, we find a unitary $v \in \cont([0,1], \rM_m \ot \rM_n \ot \rM_o)$ such that $v|_{[0, \frac{1}{3}]} = \rho_1$ and $v|_{[\frac{2}{3}, 1]} = \rho_2$.   It normalises $I_{m,n,o}$ and hence defines an automorphism $\alpha \in \Aut(I_{m,n,o})$.  If now $B_\sigma$ is constructed from a unitary $u \in \cont([\frac{1}{3}, \frac{2}{3}], \rM_m \ot \rM_n \ot \rM_o)$, then we obtain for $f \in B_\sigma$
\begin{equation*}
  v f(t) v^*
  \in
  \begin{cases}
    \rho_1 (\rD_m \ot 1 \ot \rD_o) \rho_1^{-1}  = \rD_m \ot 1 \ot \rD_o \eqcomma & \text{ if } t = 0 \eqcomma  \\
    \rho_1 (\rD_m \ot \rD_n \ot \rD_o) \rho_1^{-1}  = \rD_m \ot \rD_n \ot \rD_o \eqcomma & \text{ if } t \in (0,\frac{1}{3}] \eqcomma \\
    v_t u_t (\rD_m \ot \rD_n \ot \rD_o) u_t^* v_t^* \eqcomma & \text{ if } t \in [\frac{1}{3}, \frac{2}{3}] \eqcomma \\
    \rho_2 (\rD_m \ot \rD_n \ot \rD_o) \rho_2^{-1}  = \rD_m \ot \rD_n \ot \rD_o \eqcomma & \text{ if } t \in [\frac{2}{3},1) \eqcomma \\
    \rho_2 (1 \ot \rD_n \ot \rD_o) \rho_2^{-1}  = 1 \ot \rD_n \ot \rD_o \eqcomma & \text{ if } t = 1
    \eqstop
  \end{cases}
\end{equation*}
Since $v_t u_t (\rD_m \ot \rD_n \ot \rD_o) u_t^* v_t^* = v_t u_t \rho_2^{-1} (\rD_m \ot \rD_n \ot \rD_o) \rho_2 u_t^* v_t^*$ and $v_{1/3}u_{1/3} \rho_2^{-1} = \rho_1 \sigma \rho_2^{-1}$ as well as $v_{2/3} u_{2/3} \rho_2^{-1} = \rho_2 1 \rho_2^{-1} = 1$, we conclude that $\alpha(B_\sigma) = B_{\rho_1 \sigma \rho_2^{-1}}$.

Let now $\alpha \in \Aut(I_{m,n,o})$ be an orientation preserving automorphism in the sense of Notation \ref{not:orientation-preserving-automorphism}.  Assume that $\alpha$ satisfies $\alpha(B_\sigma) = B_\pi$.  We can apply Lemma \ref{lem:homeomorphism-realised} and assume that $\alpha|_{\cont([0,1])} = \id$.  Let us consider the elements $u_\sigma, u_\pi \in \cont([\frac{1}{3}, \frac{2}{3}], \rM_m \ot \rM_n \ot \rM_o)$ as described in Example \ref{ex:twisted-standard-Cartan}.  The automorphism $(\Ad u_\pi)^{-1} \circ \alpha \circ \Ad u_{\sigma}$ normalises the standard MASA $\cont([1/3, 2/3], \rD_m \ot \rD_n \ot \rD_o)$ and it is hence a continuous function with values in $((\bT^m \times \bT^n \times \bT^o) / \bT )\cdot  \Sym(\ul m \times \ul n \times \ul o)$.  In particular, its permutational part is constant.  Let us denote it by $[\alpha_t]$, $t \in [\frac{1}{3}, \frac{2}{3}]$.  Evaluating at $\frac{1}{3}$ and $\frac{2}{3}$, we obtain the equality 
\begin{equation*}
[(\Ad u_\pi)^{-1}_{1/3} \circ \alpha_{1/3} \circ (\Ad u_\sigma)_{1/3}] = [(\Ad u_\pi)^{-1}_{2/3} \circ \alpha_{2/3} \circ (\Ad u_\sigma)_{2/3}].
\end{equation*}
The definitions of $u_\sigma$ and $u_\pi$ give then $\pi^{-1} \circ [\alpha_{1/3}] \circ \sigma = [\alpha_{2/3}]$.  Put differently, we have
\begin{equation}
  \label{eq:simga-pi}
  [\alpha_{1/3}] \circ \sigma \circ [\alpha_{2/3}]^{-1}= \pi
  \eqstop
\end{equation}
Since $\alpha|_{(0,1/3]}$ normalises the diagonal MASA $\cont((0,1/3], \rD_m \ot \rD_n \ot \rD_o)$, it takes values in $((\bT^m \times \bT^n \times \bT^o)/\bT) \cdot \Sym(\ul m \times \ul n \times \ul o)$ and thus its permutational part is constant, say $[\alpha_{(0, 1/3]}] \equiv \rho_1$.  Now $\alpha_{(0,1/3]}$ is the restriction of an automorphism of $I_{m,n,o}$, so that $\rho_1$ must normalise $\rD_m \ot 1 \ot \rD_o$.  This implies $\rho_1 \in \Sym(\ul m \times \ul o) \lwreath \Sym(\ul n)$.  Similarly, we obtain $[\alpha_{[2/3, 1)}] \equiv \rho_2 \in \Sym(\ul m) \wreath \Sym(\ul n \times \ul o)$.  Combining this with (\ref{eq:simga-pi}), we obtain
\begin{equation*}
  \pi
  \in
  \bigl ( \Sym(\ul m \times \ul o) \lwreath \Sym(\ul n) \bigr )
  \sigma
  \bigl ( \Sym(\ul m) \wreath \Sym(\ul n \times \ul o) \bigr )
  \eqstop
\end{equation*}
This proves the first statement of the theorem.

Assume now $m \neq n$.  Then ${\rM_m \ot 1 \ot \rM_o} \not \cong {1 \ot \rM_n \ot \rM_o}$ implies that every automorphism of $I_{m,n,o}$ is orientation preserving.  This proves the second statement of the theorem.

Finally assume that $m = n$.  Lemma \ref{lem:flip-twisted-standard-cartan} shows that $B_\sigma$ is conjugate by a non-orientation preserving automorphism of $I_{m,m,o}$ to $B_{\nu \sigma^{-1} \nu}$.  Since the composition of two non-orientation preserving automorphism of $I_{m,m,o}$ is orientation preserving and $\nu (\Sym(\ul m \times \ul o)  \lwreath \Sym(\ul m)) \nu = \Sym(\ul m) \wreath \Sym(\ul m \times \ul o)$, this proves the last statement of the theorem.
\end{proof}

We now address the complete classification of Cartan subalgebras of stabilised dimension drop algebras.  It turns out that the twisted standard Cartan subalgebras exhaust all examples of non-degenerate Cartan subalgebras in there.
\begin{theorem}
  \label{thm:classification-cartan-dimension-drop}
  Every non-degenerate Cartan subalgebra of a stabilised dimension drop algebra is conjugate to a twisted standard Cartan subalgebra.
\end{theorem}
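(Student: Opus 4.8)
The plan is to first put $B$ into standard form near both endpoints of $[0,1]$ by means of the one-sided result, and then to recognise what is left as a twisted standard Cartan subalgebra by extracting a permutation from its behaviour on the middle third of the interval. Throughout, $D := \cont([0,1], \rD_m \ot \rD_n \ot \rD_o) \cap I_{m,n,o}$ denotes the standard Cartan subalgebra. One may assume $m, n \geq 2$: if $m = 1$ or $n = 1$, then $I_{m,n,o}$ is itself a one-sided dimension drop algebra by Remark \ref{rem:one-sided-is-stabilised-dimension-drop}, and Proposition \ref{prop:unique-cartan-one-sided-dimension-drop} shows directly that $B$ is conjugate to $B_{\id}$.

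\emph{Step 1: standardisation at the endpoints.} Since $B$ is non-degenerate we have $\dim B_0 = mo$, so under the natural identification $I_{m,n,o}|_{[0,1/3]} \cong J_{mo,n}$ the restriction $B|_{[0,1/3]}$ is a non-degenerate Cartan subalgebra; likewise $B|_{[2/3,1]}$ is a non-degenerate Cartan subalgebra of $I_{m,n,o}|_{[2/3,1]} \cong J_{no,m}$. (This is where non-degeneracy is used.) Proposition \ref{prop:unique-cartan-one-sided-dimension-drop} then provides an automorphism $\gamma_0$ of $I_{m,n,o}|_{[0,1/3]}$ with $\gamma_0(B|_{[0,1/3]}) = D|_{[0,1/3]}$. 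Since $n \geq 2$, any automorphism of $I_{m,n,o}|_{[0,1/3]}$ fixes $0$ and $\tfrac13$, so after composing $\gamma_0$ with a reparametrisation of $[0,\tfrac13]$ — which preserves $D|_{[0,1/3]}$ — one may assume $\gamma_0$ fixes the centre $\cont([0,\tfrac13])$; over the contractible half-open interval $(0,\tfrac13]$ it is then of the form $\Ad u_0$ for a bounded unitary $u_0 \in \cU(\cont_b((0,\tfrac13], \rM_m \ot \rM_n \ot \rM_o))$. Extending $u_0$ over $[\tfrac13,1]$ by a path of unitaries ending at $1$ at the point $t=1$, using connectedness of the unitary group of $\rM_m \ot \rM_n \ot \rM_o$, yields a bounded unitary whose adjoint action is a genuine automorphism $\alpha_0$ of $I_{m,n,o}$: the fibre constraint at $0$ is still respected because $\gamma_0$ was an automorphism of $I_{m,n,o}|_{[0,1/3]}$, and the constraint at $1$ because the extension is trivial there. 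Thus $\alpha_0(B) = D$ on $[0,\tfrac13]$. I would then repeat the argument for $\alpha_0(B)$ near $t=1$, but now extend the conjugating unitary leftwards over $[0,\tfrac23]$ so that it equals the identity on $[0,\tfrac13]$ (interpolating on $[\tfrac13,\tfrac23]$); this gives $\alpha_1 \in \Aut(I_{m,n,o})$, equal to the identity on $[0,\tfrac13]$, with $(\alpha_1\circ\alpha_0)(B) = D$ on $[0,\tfrac13] \cup [\tfrac23, 1]$. Replacing $B$ by $(\alpha_1\circ\alpha_0)(B)$, we may assume $B = D$ on $[0,\tfrac13] \cup [\tfrac23, 1]$.

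\emph{Step 2: extracting the twisting permutation.} Now $B|_{[1/3,2/3]}$ is a Cartan subalgebra of $\cont([\tfrac13,\tfrac23], \rM_m \ot \rM_n \ot \rM_o)$ whose fibres at $\tfrac13$ and $\tfrac23$ are both equal to $\rD_m \ot \rD_n \ot \rD_o$. Corollary \ref{cor:li-renault-unitary} yields a unitary $w \in \cU(\cont([\tfrac13,\tfrac23], \rM_m \ot \rM_n \ot \rM_o))$ with $w\, B|_{[1/3,2/3]}\, w^* = \cont([\tfrac13,\tfrac23], \rD_m \ot \rD_n \ot \rD_o)$. Then $w_{1/3}$ and $w_{2/3}$ normalise $\rD_m \ot \rD_n \ot \rD_o$, and multiplying $w$ on the left by a path of diagonal unitaries (which leaves $\cont([\tfrac13,\tfrac23], \rD_m \ot \rD_n \ot \rD_o)$ invariant) one may assume $w_{1/3}, w_{2/3} \in \Sym(\ul m \times \ul n \times \ul o)$ are permutation matrices. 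Setting $\sigma := w_{1/3}^{-1} w_{2/3} \in \Sym(\ul m \times \ul n \times \ul o)$ and $u_t := w_t^{-1} w_{2/3}$, and using that $w_{2/3}$ normalises $\rD_m \ot \rD_n \ot \rD_o$, one obtains $B|_{[1/3,2/3]} = \{ f \mid f(t) \in u_t (\rD_m \ot \rD_n \ot \rD_o) u_t^* \}$ with $u_{1/3} = \sigma$ and $u_{2/3} = 1$. Together with $B = D$ on $[0,\tfrac13] \cup [\tfrac23, 1]$, this is exactly the description of the twisted standard Cartan subalgebra $B_\sigma$ built from $u$ in Example \ref{ex:twisted-standard-Cartan}. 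Hence $B = B_\sigma$, and the original Cartan subalgebra is conjugate to the twisted standard Cartan subalgebra $B_\sigma$.

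\emph{The main obstacle.} The subtle part is the globalisation in Step 1. An automorphism of a one-sided dimension drop algebra furnished by Proposition \ref{prop:unique-cartan-one-sided-dimension-drop} need not be inner, and the unitary implementing it on the half-open interval need not extend continuously over the special endpoint; one must check that it can nevertheless be taken of the form $\Ad u_0$ with $u_0$ a bounded multiplier unitary, and that the extension over the complementary subinterval does not undo the standardisation already obtained at the other endpoint. This amounts to inspecting the proof of Proposition \ref{prop:unique-cartan-one-sided-dimension-drop}, where every conjugation is by a unitary of $J_{a,b}$ or by a bounded multiplier unitary and several of them are supported near the special endpoint. Once this bookkeeping is carried out, the remainder — in particular all of Step 2 — is routine.
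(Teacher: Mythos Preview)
Your proof is correct and follows essentially the same two-step strategy as the paper: standardise near both endpoints via Proposition \ref{prop:unique-cartan-one-sided-dimension-drop}, then read off a permutation on the middle third using the Li--Renault uniqueness result. The only difference is that you supply more detail on the globalisation step (the paper simply asserts that $\alpha$ and $\beta$ extend to automorphisms of $I_{m,n,o}$ trivial on the opposite third), and your normalisation in Step~2 is organised slightly differently---you adjust both endpoints of $w$ to permutation matrices and set $\sigma = w_{1/3}^{-1}w_{2/3}$, whereas the paper first forces $u_{2/3}=1$ and then $u_{1/3}=\pi$, obtaining $B_{\pi^{-1}}$---but these are cosmetic variations of the same argument.
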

\begin{proof}
  Let $B \subset I_{m,n,o}$ be a non-degenerate Cartan subalgebra.  By Proposition \ref{prop:unique-cartan-one-sided-dimension-drop}, there are automorphisms $\alpha \in \Aut((I_{m,n,o})_{[0,\frac{1}{3}]})$ and $\beta \in \Aut((I_{m,n,o})_{[\frac{2}{3}, 1]})$ such that
  \begin{equation*}
    \alpha( B_{[0,\frac{1}{3}]} )
    =
    \{f \in \cont([0,\frac{1}{3}], \rD_m \ot \rD_n \ot \rD_o \mid f(0) \in \rD_m \ot 1 \ot \rD_o\}
  \end{equation*}
  and
  \begin{equation*}
    \beta (B_{[\frac{2}{3},1]})
    =
    \{f \in \cont([\frac{2}{3},1], \rD_m \ot \rD_n \ot \rD_o \mid f(1) \in 1 \ot \rD_n \ot \rD_o\}.
  \end{equation*}
  Extending $\alpha$ and $\beta$ to automorphisms of $I_{m,n,o}$ that satisfy $\alpha|_{[\frac{2}{3}, 1]} = \id$ and $\beta|_{[0, \frac{1}{3}]} = \id$, we may assume that the restrictions to $[0,\frac{1}{3}]$ and $[\frac{2}{3}, 1]$ of $B$ and the standard Cartan subalgebra of $I_{m,n,o}$ coincide.

  By Theorem \ref{thm:li-renault}, there is a unitary $u \in (I_{m,n,o})_{[\frac{1}{3}, \frac{2}{3}]}$ such that
  \begin{equation*}
    u B_{[\frac{1}{3}, \frac{2}{3}]} u^* = \cont([\frac{1}{3}, \frac{2}{3}], \rD_m \ot \rD_n \ot \rD_o)
    \eqstop
  \end{equation*}
  Then
  \begin{equation*}
    u_{\frac{2}{3}}
    \in
    \cN_{\rM_m \ot \rM_n \ot \rM_o} (\rD_m \ot \rD_n \ot \rD_o)
    =
    (\bT^m \times \bT^n \times \bT^o) \rtimes \Sym(\ul m \times \ul n \times \ul o)
    \eqstop
  \end{equation*}
  Multiplying $u$ with an element from $\cont([\frac{1}{3}, \frac{2}{3}], \bT^m \times \bT^n \times \bT^o)  \rtimes \Sym(\ul m \times \ul n \times \ul o)$, we may hence assume that $u_{\frac{2}{3}} = 1$.  Similarly, we obtain
  \begin{equation*}
    u_{\frac{1}{3}}
    \in
    (\bT^m \times \bT^n \times \bT^o) \rtimes \Sym(\ul m \times \ul n \times \ul o)
    \eqstop
  \end{equation*}
  Since $\bT^m \times \bT^n \times \bT^o$ is connected, we may multiply $u$ with a function from $\cont([\frac{1}{3}, \frac{2}{3}], \bT^m \times \bT^n \times \bT^o)$ to assume that in addition $u_{\frac{1}{3}} = \pi \in \Sym(\ul m \times \ul n \times \ul o)$.  It follows that $f \in B$ satisfies
  \begin{equation*}
    f(t) \in
    \begin{cases}
      \rD_m \ot \rD_n \ot \rD_o, & t \in [0,\frac{1}{3}] \cup [\frac{2}{3}, 1], \\
      u_t^* (\rD_m \ot \rD_n \ot \rD_o) u_t, & t \in [\frac{1}{3},\frac{2}{3}]
      \eqstop
    \end{cases}
  \end{equation*}
  It follows that $B$ is the twisted standard Cartan subalgebra associated with $\pi^{-1}$.  This completes the proof of the theorem.
\end{proof}


\section{Counting Cartan subalgebras}
\label{sec:counting}

Let us start this section by counting conjugacy classes of non-degenerate Cartan subalgebras in the example of $I_{2,2}$, which is the easiest non-trivial case of all dimension drop algebras.
\begin{example}
  \label{ex:cartans-two-two}
  $I_{2,2}$ has at least 2 non-degenerate Cartan subalgebras up to conjugacy by an automorphism.  These are the standard Cartan subalgebra and the Cartan subalgebra constructed in Example \ref{ex:non-unique-Cartan}.  Indeed, both are not conjugate to each other, since their spectra are not homeomorphic.  Theorem \ref{thm:classification-cartan-dimension-drop} implies that these are all non-degenerate Cartan subalgebras of $I_{2,2}$ up to conjugacy. For this, we have to prove that
  \begin{equation*}
    \Sym(\ul 2) \lwreath \Sym(\ul 2) \bs \Sym(\ul 2 \times \ul 2) / \Sym(\ul 2) \wreath \Sym(\ul 2)
  \end{equation*}
  has cardinality 2.  This follows from a counting argument.  The group
  \begin{equation*}
    \Sym(\ul 2) \wreath \Sym(\ul 2) = (\Sym(\ul 2) \oplus \Sym(\ul 2)) \rtimes \Sym(\ul 2)
  \end{equation*}
  has cardinality $2 \cdot 2 \cdot 2 = 8$, while $\Sym(\ul 2 \times \ul 2)$ has cardinality $4 \cdot 3 \cdot 2 = 24$.  It follows that the quotient $\Sym(\ul 2 \times \ul 2) / \Sym(\ul 2) \wreath \Sym(\ul 2)$ has cardinality $3$.  Since $\Sym(\ul 2) \lwreath \Sym(\ul 2)$ acts non-trivially on this set, we infer that the orbit space has cardinality at most 2.  Combined with the initial observation that $I_{2,2}$ has at least 2 non-conjugate Cartan subalgebras, this shows that $I_{2,2}$ has exactly 2 non-conjugate non-degenerate Cartan subalgebras.
\end{example}

\subsection{Matrix combinatorics and Cartan subalgebras}
\label{sec:matrix-combinatorics}

In order to count conjugacy classes of non-degenerate Cartan subalgebras in any other - more complicated - case than the one described in Example \ref{ex:cartans-two-two}, it is necessary to improve on the inexplicit parametrisation provided by double cosets of permutations.  It turns out to be advantageous to replace permutations by their permutation matrices and then compress their information.  Indeed, Theorem~\ref{thm:cartan-classified-by-matrix} will describe conjugacy classes of non-degenerate Cartan subalgebras of dimension drop algebras in terms of certain congruence classes of matrices resulting from the following procedure.  This in turn will lead to the desired parametrisation in Theorem \ref{thm:classification-by-matrices}.
\begin{definition}
  \label{def:reduced-matrix}
  Let $\sigma \in \Sym(\ul m \times \ul n \times \ul o)$.  We identify $\sigma$ with its associated permutation matrix $(\sigma_{(i,j,k)(i', j', k')})_{(i,j,k), (i',j',k') \in \ul m \times \ul n \times \ul o}$ satisfying
  \begin{equation*}
    \sigma_{(i,j,k)(i', j', k')} = \delta_{\sigma(i',j',k'), (i,j,k)}
    \eqstop
  \end{equation*}
  The matrix $A = (A_{(i,k), (j',k')})_{(i,k) \in \ul m \times \ul o, (j',k') \in \ul n \times \ul o}$ with entries
  \begin{equation*}
    A_{(i,k), (j',k')} = \sum_{i' \in \ul m, j \in \ul n} \sigma_{(i,j,k)(i',j',k')}
  \end{equation*}
  is called the reduced matrix of $\sigma$.
\end{definition}

We start with a lemma that will be used in the proof of key Proposition \ref{prop:congruence-matrices}.
\begin{lemma}
  \label{lem:congruence-subpermutation-matrices}
  Let $C, D \in \rM_{m,n}(\{0,1\})$ be matrices such that in each row and each column there is at most one non-zero entry.  If $\sum_{i \in \ul m, j \in \ul n} C_{ij} = \sum_{i \in \ul m, j \in \ul n} D_{ij}$, then there are permutation matrices $\rho_1 \in \Sym(\ul m)$, $\rho_2 \in \Sym(\ul n)$ such that $C = \rho_1 D \rho_2$.
\end{lemma}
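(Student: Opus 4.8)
The plan is to exploit the fact that a $\{0,1\}$-matrix with at most one nonzero entry in each row and each column is a partial permutation matrix, which is determined up to permutations of its rows and columns purely by the number of its nonzero entries. Concretely, if $C$ has $r$ ones, then it has exactly $r$ nonzero rows and exactly $r$ nonzero columns, and the $r \times r$ submatrix cut out by them is a genuine $r \times r$ permutation matrix. The hypothesis $\sum_{i,j} C_{ij} = \sum_{i,j} D_{ij} =: r$ is then precisely the statement that $C$ and $D$ have the same such $r$, which is the only invariant the argument needs.

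First I would put $C$ into a normal form. Choose $\rho_1^C \in \Sym(\ul m)$ carrying the $r$ nonzero rows of $C$ onto $\{1, \dotsc, r\}$ and $\rho_2^C \in \Sym(\ul n)$ carrying the $r$ nonzero columns onto $\{1, \dotsc, r\}$. Then
\begin{equation*}
  \rho_1^C \, C \, \rho_2^C
  =
  \begin{pmatrix} P_C & 0 \\ 0 & 0 \end{pmatrix}
  \eqcomma
\end{equation*}
with $P_C \in \Sym(\ul r)$: the rows and columns indexed by $r+1, \dotsc$ are exactly the zero ones, and no nonzero entry can sit off the top left $r \times r$ block because each nonzero row carries its single $1$ in a nonzero column (and symmetrically for columns). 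The same recipe applied to $D$ produces permutations $\rho_1^D \in \Sym(\ul m)$, $\rho_2^D \in \Sym(\ul n)$ and a permutation matrix $P_D \in \Sym(\ul r)$ with $\rho_1^D \, D \, \rho_2^D = \bigl(\begin{smallmatrix} P_D & 0 \\ 0 & 0 \end{smallmatrix}\bigr)$.

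Finally I would connect the two normal forms. Since $P_C P_D^{-1} \in \Sym(\ul r)$, letting $\tau \in \Sym(\ul m)$ act as $P_C P_D^{-1}$ on $\{1, \dotsc, r\}$ and as the identity on $\{r+1, \dotsc, m\}$ gives $\tau \, \rho_1^D \, D \, \rho_2^D = \rho_1^C \, C \, \rho_2^C$, and hence
\begin{equation*}
  C
  =
  \bigl( (\rho_1^C)^{-1} \, \tau \, \rho_1^D \bigr) \, D \, \bigl( \rho_2^D \, (\rho_2^C)^{-1} \bigr)
  \eqstop
\end{equation*}
Both bracketed factors are products of permutation matrices, hence themselves permutation matrices, so they serve as the required $\rho_1 \in \Sym(\ul m)$ and $\rho_2 \in \Sym(\ul n)$. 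I do not expect a genuine obstacle here: the whole content is that the number of ones is a complete invariant of a partial permutation matrix up to row and column permutations, and the only points demanding a little care are the vanishing of the normal form outside the $r \times r$ block and the bookkeeping of which side — and in which direction — each permutation acts on.
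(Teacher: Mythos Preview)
Your proof is correct and follows essentially the same approach as the paper: both recognise that a $\{0,1\}$-matrix with at most one $1$ per row and column is determined up to row and column permutations by its number of ones, and both match $C$ and $D$ via a permutation on the resulting $r \times r$ block. The only cosmetic difference is that you pass through a fixed block-diagonal normal form for each of $C$ and $D$ separately, whereas the paper first aligns the non-zero columns and rows of $D$ with those of $C$ directly and then adjusts on the common $r \times r$ block; neither version carries more content than the other.
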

\begin{proof}
  First note that the assumptions imply that $C$ and $D$ have the same number of non-zero rows and non-zero columns.  We can hence find a permutation matrix $\rho_2 \in \Sym(\ul n)$ such that $C$ and $D \rho_2$ have the same non-zero columns.  Further, we can find a permutation matrix $\rho'_1 \in \Sym(\ul m)$ such that $C$ and $\rho'_1 D \rho_2$ have the same non-zero rows.  Since the non-zero columns of $\rho'_1 D \rho_2$ and $D \rho_2$ are the same, we infer that $C$ and $\rho'_1 D \rho_2$ have the same non-zero rows and non-zero columns.  Restricting to these non-zero rows and columns we obtain two $\{0,1\}$-valued matrices with exactly one non-zero entry in each row and each column - these are permutation matrices.  So we can replace $\rho'_1$ with some other $\rho_1 \in \Sym(\ul m)$ and obtain $C = \rho_1 D \rho_2$.
\end{proof}

\begin{proposition}
  \label{prop:congruence-matrices}
  Let $\sigma, \pi \in \Sym(\ul m \times \ul n \times \ul o)$. Let $A, B \in \rM_{\ul m \times \ul o , \, \ul n \times \ul o}(\NN)$ be the reduced matrices of $\sigma$ and $\pi$, respectively.  Then the following statements are equivalent.
  \begin{enumerate}
  \item \label{it:same-coset}
    $\sigma, \pi$ define the same element in
    $\Sym(\ul m \times \ul o) \lwreath \Sym(\ul n) \bs \Sym(\ul m \times \ul n \times \ul o) / \Sym(\ul m) \wreath \Sym(\ul n \times \ul o)$.
  \item \label{it:congruent-reduced-matrix}
    There are elements $\rho_1 \in \Sym(\ul m \times \ul o)$, $\rho_2 \in \Sym(\ul n \times \ul o)$ such that $\rho_1 A \rho_2 = B$.
  \end{enumerate}
\end{proposition}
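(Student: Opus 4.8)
The plan is to give the reduced matrix a geometric meaning and then read off exactly how the two flanking subgroups act on it. View $\sigma$ as a bijection of the set $S := \ul m \times \ul n \times \ul o$ and equip $S$ with two partitions: the \emph{target partition} into the fibres $Q_{(i,k)} := \{(i,j,k) : j \in \ul n\}$, indexed by $(i,k) \in \ul m \times \ul o$, and the \emph{source partition} into the fibres $P_{(j,k)} := \{(i,j,k) : i \in \ul m\}$, indexed by $(j,k) \in \ul n \times \ul o$. Unwinding Definition~\ref{def:reduced-matrix} gives $A_{(i,k),(j',k')} = |P_{(j',k')} \cap \sigma^{-1}(Q_{(i,k)})|$, so $A$ is just the biadjacency matrix of the bipartite multigraph joining the source fibres to the target fibres through $\sigma$. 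On the group side, the displayed identities in Notation~\ref{not:wreath-product} say precisely that $K := \Sym(\ul m \times \ul o) \lwreath \Sym(\ul n)$ is the setwise stabiliser in $\Sym(S)$ of the target partition, and $H := \Sym(\ul m) \wreath \Sym(\ul n \times \ul o)$ the setwise stabiliser of the source partition. I write $N \leq K$ and $M \leq H$ for the respective base groups, so that $N$ consists of the permutations fixing every target fibre setwise and $M$ of those fixing every source fibre setwise.

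I would first prove \ref{it:same-coset}$\Rightarrow$\ref{it:congruent-reduced-matrix}. Write a general element of $K$ as $\rho_1 \nu$ with $\rho_1 \in \Sym(\ul m \times \ul o)$ and $\nu \in N$, and a general element of $H$ as $\mu \rho_2$ with $\mu \in M$ and $\rho_2 \in \Sym(\ul n \times \ul o)$. Composing $\sigma$ on either side with $\nu$ or $\mu$ only moves points around inside a single fibre, so it changes none of the intersection numbers $|P_{(j',k')} \cap \sigma^{-1}(Q_{(i,k)})|$; composing on the right with $\rho_2$ permutes the source fibres and on the left with $\rho_1$ the target fibres, which permutes the columns, respectively the rows, of the reduced matrix. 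Hence the reduced matrix of any $\rho_1 \nu \sigma \mu \rho_2$ is obtained from $A$ by a row permutation coming from $\rho_1$ and a column permutation coming from $\rho_2$, which is exactly \ref{it:congruent-reduced-matrix}.

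For \ref{it:congruent-reduced-matrix}$\Rightarrow$\ref{it:same-coset}, I would first use the subgroups $\Sym(\ul m \times \ul o) \leq K$ and $\Sym(\ul n \times \ul o) \leq H$, together with the row/column bookkeeping of the previous paragraph, to replace $\sigma$ by a permutation in its $(K,H)$-double coset of the form $\rho_1' \sigma \rho_2'$ whose reduced matrix is literally $B$; thus we may assume $A = B$ and must produce $\pi \in K\sigma H$. The crux is the following. For each source fibre $P := P_{(j',k')}$, the families $\{P \cap \sigma^{-1}(Q_{(i,k)})\}_{(i,k)}$ and $\{P \cap \pi^{-1}(Q_{(i,k)})\}_{(i,k)}$ are two partitions of $P$ whose corresponding blocks have the same cardinality, both equal to $A_{(i,k),(j',k')} = B_{(i,k),(j',k')}$; choose a bijection $m_{(j',k')}$ of $P$ carrying the second partition onto the first blockwise, and let $m \in M$ be the permutation of $S$ acting as $m_{(j',k')}$ on each source fibre $P_{(j',k')}$. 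Then for every $x \in S$ the points $\sigma(m(x))$ and $\pi(x)$ lie in one and the same target fibre, which is exactly the statement that $n := \pi \circ (\sigma m)^{-1}$ maps every target fibre into itself, i.e.\ $n \in N$. Thus $\pi = n\,\sigma\,m$ with $n \in N \leq K$ and $m \in M \leq H$, so $\pi \in K\sigma H$, as needed.

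The main point requiring care is the triple dictionary being invoked — reduced matrix versus bipartite graph, wreath products versus partition stabilisers, and row/column operations versus fibre permutations — and in particular verifying that the base groups $N$ and $M$ act trivially on the reduced matrix. There is a more computational route which splits the $mno \times mno$ permutation matrices of $\sigma$ and $\pi$ into $n \times m$ sub-blocks, observes that corresponding sub-blocks are sub-permutation matrices with equal entry sums, and invokes Lemma~\ref{lem:congruence-subpermutation-matrices} to congruence them; the subtlety there — and the reason the fibre-colouring argument above is cleaner — is that the permutations coming from $K$ and $H$ are forced to act uniformly along an entire block-row, respectively block-column, so those blockwise congruences would have to be arranged simultaneously rather than independently.
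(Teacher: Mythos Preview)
Your argument is correct. Both directions work as you describe: the partition/fibre interpretation of the reduced matrix makes the action of the base groups $N$ and $M$ transparently trivial, and for \ref{it:congruent-reduced-matrix}$\Rightarrow$\ref{it:same-coset} your construction of $m \in M$ matching the induced partitions of each source fibre, followed by $n = \pi(\sigma m)^{-1} \in N$, is clean and complete.

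The paper takes precisely the ``more computational route'' you sketch in your final paragraph. For \ref{it:same-coset}$\Rightarrow$\ref{it:congruent-reduced-matrix} it computes directly with permutation-matrix entries that right multiplication by $\rho \in \Sym(\ul m) \wreath \Sym(\ul n \times \ul o)$ changes the reduced matrix to $A\rho_2$, and symmetrically on the left. For the converse it reduces to $A = B$, forms for each pair $((i,k),(j',k'))$ the $n \times m$ sub-blocks $C_{ji'} = \sigma_{(i,j,k)(i',j',k')}$ and $D_{ji'} = \pi_{(i,j,k)(i',j',k')}$, and invokes Lemma~\ref{lem:congruence-subpermutation-matrices} to produce $\rho_1(i,\cdot,k) \in \Sym(\ul n)$ and $\rho_2(\cdot,j',k') \in \Sym(\ul m)$ with $C = \rho_1 D \rho_2$, then assembles these into elements of the base groups. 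The subtlety you flag --- that the blockwise congruences from the lemma must be chosen compatibly so that $\rho_1$ depends only on $(i,k)$ and $\rho_2$ only on $(j',k')$ --- is real, and the paper's phrase ``we can put these permutations together'' passes over exactly this point. Your partition argument sidesteps the issue entirely: by first choosing $m \in M$ globally (one bijection per source fibre) and only then reading off $n \in N$, the compatibility is automatic. What the paper's approach buys is an explicit entrywise calculation tying the reduced matrix to permutation-matrix multiplication; what yours buys is a coordinate-free argument that needs no auxiliary lemma and makes the structure of the double coset visible.
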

\begin{proof}
  We first show that \ref{it:same-coset} implies \ref{it:congruent-reduced-matrix}.  Let 
  \begin{equation*}
    \rho \in \Sym(\ul m) \wreath \Sym(\ul n \times \ul o)
    =
    \bigl (\bigoplus_{\ul n \times \ul o} \Sym(\ul m) \bigr ) \rtimes \Sym(\ul n \times \ul o)
  \end{equation*}
  and denote by $C$ the reduced matrix of $\sigma \rho$.  Consider $\rho$ as an element in $\Sym(\ul m \times \ul n \times \ul o)$ and denote by $\rho_1: \ul m \times \ul n \times \ul o \ra \ul m$ and $\rho_2 \in \Sym(\ul n \times \ul o)$ the first and second coordinate of $\rho$ as described in Notation \ref{not:wreath-product}.  Recall that for fixed $(j,k) \in \ul n \times \ul o$, the map $\rho_1( \cdot, j,k): \ul m \ra \ul m$ is a bijection.  For all $i \in \ul m$, $j' \in \ul n$, $k, k' \in \ul o$
  \begin{align*}
    C_{(i,k),(j',k')}
    & =
      \sum_{i' \in \ul m, j \in \ul n} (\sigma \rho)_{(i,j,k)(i',j',k')}
    & \text{ (definition of reduced matrix) } \\
    & =
      \sum_{i' \in \ul m, j \in \ul n} (\sigma)_{(i,j,k) \rho(i',j',k')}
    & \text{ (multiplication of permutation matrices) } \\
    & =
      \sum_{i' \in \ul m, j \in \ul n} (\sigma)_{(i,j,k) (\rho_1(i',j',k'),\rho_2(j',k'))}
    & \text{ (splitting $\rho$ in coordinates) } \\
    & =
      \sum_{i' \in \ul m, j \in \ul n}
      (\sigma)_{(i,j,k) (i',\rho_2(j',k'))}
    & \text{ ($\rho_1( \cdot , j', k')$ is a bijection) } \\
    & =
      (A \rho_2)_{(i,k)(j',k')}
    & \text{ (multiplication with permutation matrices).}      
  \end{align*}
  So $C = A \rho_2$.  Similarly for $\rho \in \Sym(\ul m \times \ul o) \lwreath \Sym(\ul n)$ written in coordinates $\rho_1 \in \Sym(\ul m \times \ul o)$ and $\rho_2: \ul m \times \ul o \times \ul n \ra \ul n$, we obtain that the associated matrix of $\rho \sigma$ is $\rho_1 A$.  This shows the first implication.

  Let us now prove that \ref{it:congruent-reduced-matrix} implies \ref{it:same-coset}.  To this end, take $\sigma, \pi \in \Sym(\ul m \times \ul n \times \ul o)$ with reduced matrices $A$ and $B$, respectively and assume that there are $\rho_1 \in \Sym(\ul m \times \ul o)$ and $\rho_2 \in \Sym(\ul n \times \ul o)$ such that $\rho_1 A \rho_2 = B$.  By the previous paragraph, we can replace $\pi$ by $\rho_1^{-1} \pi \rho_2^{-1}$ and assume that $A = B$.  By the definition of reduced matrices, for all $(i,k) \in \ul m \times \ul o$ and all $(j',k') \in \ul n \times \ul o$ we have
  \begin{equation*}
    \sum_{i' \in \ul m, j \in \ul n} \sigma_{(i,j,k)(i',j',k')}
    =
    \sum_{i' \in \ul m, j \in \ul n} \pi_{(i,j,k)(i',j',k')}
    \eqstop
  \end{equation*}
  Put differently, the matrices $C, D \in \rM_{\ul n, \, \ul m}(\{0,1\})$ defined by
  \begin{equation*}
    C_{j i'} = \sigma_{(i,j,k)(i',j',k')}
    \qquad
    D_{j i'} = \pi_{(i,j,k)(i',j',k')}
  \end{equation*}
  have the same sum of their entries.  $C$ and $D$ are matrices with entries in $\{0, 1\}$ and on each row and on each column there is at most one non-zero entry.  Hence, Lemma \ref{lem:congruence-subpermutation-matrices} provides us with permutations $\rho_1( i, \cdot, k)  \in \Sym(\ul n)$ and $\rho_2( \cdot, j', k') \in \Sym(\ul m)$ such that $C = \rho_1 D \rho_2$.  Since $(i,k) \in \ul m \times \ul o$ and $(j', k') \in \ul n \times \ul o$ were arbitrary, we can put these permutations together and obtain elements $\rho_1 \in \bigoplus_{\ul m \times \ul o} \Sym(\ul n)$ and $\rho_2 \in \bigoplus_{\ul n \times \ul o} \Sym(\ul m)$ such that $\sigma = \rho_1 \pi \rho_2$.  This finishes the proof of the lemma.  
\end{proof}

\begin{lemma}
  \label{lem:matrix-flip}
  Let $\sigma \in \Sym(\ul m \times \ul m)$ with reduced matrix $A$.  Denote by $\nu \in \Sym( \ul m \times \ul m)$ the flip.  The reduced matrix of $\nu \sigma^{-1} \nu$ is $A^\rmt$.
\end{lemma}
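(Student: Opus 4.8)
The statement specialises the reduced-matrix construction to the case $o = 1$, $n = m$, so the first thing I would do is spell out concretely what it asserts. Identifying $\sigma \in \Sym(\ul m \times \ul m)$ with the permutation it is, the reduced matrix $A$ is the $m \times m$ matrix whose $(i,j')$-entry counts the elements $i' \in \ul m$ for which the first coordinate of $\sigma(i',j')$ equals $i$. Since $\nu\sigma^{-1}\nu$ sends $(i',j')$ to $\nu(\sigma^{-1}(j',i'))$, the $(i,j')$-entry of its reduced matrix counts the $i'$ for which the \emph{second} coordinate of $\sigma^{-1}(j',i')$ equals $i$. So the whole content is an index-bookkeeping identity, and the two routes I would weigh are (a) a direct count, or (b) realising the reduction as a matrix product and exploiting that permutation matrices are orthogonal. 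I would go with (b), since it keeps the flip manifestly symmetric.

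Concretely, I would introduce the $0$--$1$ compression matrices $R \in \rM_{m, m^2}(\{0,1\})$ with $R_{i,(a,b)} = \delta_{i,a}$ and $S \in \rM_{m^2, m}(\{0,1\})$ with $S_{(a,b),j'} = \delta_{b,j'}$; a short unravelling of Definition~\ref{def:reduced-matrix} with $o = 1$ gives $A = R\sigma S$, where on the right $\sigma$ is the $m^2 \times m^2$ permutation matrix. The one genuinely substantive point — and the step I expect to cost the most care — is the identity $S = \nu R^\rmt$, i.e. that the ``sum over the second coordinate of the row index'' compression and the ``sum over the first coordinate of the column index'' compression are interchanged by the flip; this is a one-line check from $\nu_{(a,b),(c,d)} = \delta_{a,d}\delta_{b,c}$, together with $\nu^\rmt = \nu$ and $\nu^2 = 1$.

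Granting that, the conclusion is formal. Since $\sigma$ is a permutation matrix, $\sigma^{-1} = \sigma^\rmt$, so the reduced matrix of $\nu\sigma^{-1}\nu$ is
\[
  R(\nu\sigma^\rmt\nu)S = R\nu\sigma^\rmt\nu(\nu R^\rmt) = R\nu\sigma^\rmt R^\rmt,
\]
whereas
\[
  A^\rmt = (R\sigma S)^\rmt = S^\rmt\sigma^\rmt R^\rmt = (\nu R^\rmt)^\rmt\sigma^\rmt R^\rmt = R\nu\sigma^\rmt R^\rmt,
\]
and the two coincide. For a reader who would rather avoid setting up $R$ and $S$, I would add a remark that the same equality can be read off directly from the concrete description in the first paragraph, by tracking where the flip moves the coordinates: one gets $A'_{i,j'} = \#\{a \in \ul m : \sigma(a,i)_1 = j'\} = A_{j',i}$. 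I do not anticipate any real obstacle beyond pinning down the index conventions of Definition~\ref{def:reduced-matrix} exactly, which is precisely what the identity $S = \nu R^\rmt$ encapsulates.
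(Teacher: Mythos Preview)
Your proof is correct. The paper's own argument is essentially your route (a): it computes directly that $(\nu\sigma^{-1}\nu)_{(i,j)(k,l)} = \sigma_{(l,k)(j,i)}$ via the chain ``multiply by permutation matrix, apply $\nu$, invert a permutation matrix'', and then sums over $j,k$ to read off $A^\rmt_{il}$ from the definition of the reduced matrix.

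Your route (b) repackages the same bookkeeping as a single linear-algebraic identity. Encoding the reduction as $A = R\sigma S$ with compression matrices $R,S$ and observing $S = \nu R^\rmt$ makes the transpose appear formally from $(R\sigma S)^\rmt = S^\rmt\sigma^\rmt R^\rmt$ together with $\nu^2 = 1$ and $\nu^\rmt = \nu$. This is a genuinely cleaner way to see \emph{why} the flip produces a transpose --- the one nontrivial fact is precisely $S = \nu R^\rmt$, which is your ``flip exchanges the two compressions'' observation --- at the cost of introducing the auxiliary objects $R,S$. The paper's entry-wise computation needs no such setup but leaves the underlying symmetry implicit. Both proofs are short and neither has any real advantage in this $o=1$ setting; your version would perhaps scale more transparently if one wanted an analogous statement for general $o$.
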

\begin{proof}
  For all $i,j,k,l \in \ul m$, we have
  \begin{align*}
    (\nu \sigma^{-1} \nu)_{(i,j)(k,l)}
    & =
      \sigma^{-1}_{\nu^{-1}(i,j) \nu(k,l)}
    & \text{ (multiplication with permutation matrices) } \\
    & =
      \sigma^{-1}_{(j,i)(l,k)}
    & \text{ (definition of $\nu$) } \\
    & =
      \sigma_{(l,k)(j,i)}
      & \text{ (inverses of permutation matrices).}
  \end{align*}
  So the reduced matrix of $\nu \sigma^{-1} \nu$ satisfies
  \begin{equation*}
    \sum_{j, k \in \ul m} (\nu \sigma^{-1} \nu)_{(i,j)(k,l)}
    =
    \sum_{j, k \in \ul m}  \sigma_{(l,k)(j,i)}
    =
    A_{li}
    =
    A^\rmt_{il}
    \eqcomma
  \end{equation*}
  which proves the lemma.
\end{proof}

Let us give a name to the relation on $\rM_{m,n}(\NN)$ described by Proposition \ref{prop:congruence-matrices} and Lemma \ref{lem:matrix-flip}.  It does resemble, but is not exactly the same as usual notions of congruence: while on non-square matrices it agrees with a common notion of matrix congruence, we additionally call a square matrix and its transpose congruent.
\begin{definition}
  \label{def:congruence}
  Let $A, B \in \rM_{m,n}(\NN)$.  We call $A, B$ congruent to each other if there are permutations $\rho_1 \in \Sym(m)$, $\rho_2 \in \Sym(n)$ such that $\rho_1 A \rho_2 = B$ or $\rho_1 A^\rmt \rho_2 = B$, where the latter makes sense only in case $m = n$.
\end{definition}

\begin{theorem}
  \label{thm:cartan-classified-by-matrix}
  Let $B_\sigma, B_\pi \subset I_{m,n,o}$ be two twisted standard Cartan subalgebras.  Then $B_\sigma$ is conjugate to $B_\pi$ if and only if the reduced matrices of $\sigma$ and $\pi$ are congruent.
\end{theorem}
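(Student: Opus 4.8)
The plan is to derive Theorem~\ref{thm:cartan-classified-by-matrix} by chaining together three results already at hand: the conjugacy classification of twisted standard Cartan subalgebras in Theorem~\ref{thm:non-conjugate-cartans}, the dictionary between double cosets of permutations and congruence of reduced matrices in Proposition~\ref{prop:congruence-matrices}, and the identification of the reduced matrix of $\nu\sigma^{-1}\nu$ with $A^\rmt$ in Lemma~\ref{lem:matrix-flip}. The argument will naturally split into the cases $m \neq n$ and $m = n$, since the reduced matrices are rectangular in the former case and square in the latter, and the square case is exactly where the transpose clause of Definition~\ref{def:congruence} carries content.

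First I would handle $m \neq n$. Write $A$ and $B$ for the reduced matrices of $\sigma$ and $\pi$; they lie in $\rM_{\ul m \times \ul o, \, \ul n \times \ul o}(\NN)$ and are rectangular since $mo \neq no$, so $A$ and $B$ are congruent precisely when $\rho_1 A \rho_2 = B$ for some $\rho_1 \in \Sym(\ul m \times \ul o)$ and $\rho_2 \in \Sym(\ul n \times \ul o)$ (the transpose alternative being vacuous). By Proposition~\ref{prop:congruence-matrices} the latter is equivalent to $\sigma$ and $\pi$ defining the same element of $\Sym(\ul m \times \ul o) \lwreath \Sym(\ul n) \bs \Sym(\ul m \times \ul n \times \ul o) / \Sym(\ul m) \wreath \Sym(\ul n \times \ul o)$, which by the second bullet of Theorem~\ref{thm:non-conjugate-cartans} is equivalent to conjugacy of $B_\sigma$ and $B_\pi$. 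Reading the resulting chain of equivalences in either direction settles this case.

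Then I would handle $m = n$. By the third bullet of Theorem~\ref{thm:non-conjugate-cartans}, $B_\sigma$ and $B_\pi$ are conjugate if and only if $\sigma$ lies in the same double coset as $\pi$ or as $\nu\pi^{-1}\nu$, with $\nu$ the flip of the first two coordinates. Applying Proposition~\ref{prop:congruence-matrices} to each alternative, this holds if and only if $\rho_1 A \rho_2 = B$ for some permutation matrices, or $\rho_1 A \rho_2 = B'$ for some permutation matrices, where $B'$ is the reduced matrix of $\nu\pi^{-1}\nu$. The computation in Lemma~\ref{lem:matrix-flip}, which carries over verbatim to arbitrary $o$ once $\nu$ is taken to be the flip of the first two $\ul m$-coordinates fixing the $\ul o$-coordinate and using $(\nu\pi^{-1}\nu)_{xy} = \pi_{\nu(y)\nu(x)}$, identifies $B'$ with $B^\rmt$; moreover $\rho_1 A \rho_2 = B^\rmt$ rewrites as $\rho_2^\rmt A^\rmt \rho_1^\rmt = B$, i.e.\ congruence of $A$ and $B$ via the transpose. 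Hence the displayed disjunction is exactly the statement that $A$ and $B$ are congruent in the sense of Definition~\ref{def:congruence}, which completes the proof.

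I expect no real obstacle here: the whole argument is a concatenation of previously established equivalences, and the only point requiring attention is bookkeeping --- matching the rectangular/square dichotomy for reduced matrices with the $m \neq n$ / $m = n$ dichotomy of Theorem~\ref{thm:non-conjugate-cartans}, and recording that although Lemma~\ref{lem:matrix-flip} is phrased for $o = 1$, the same index manipulation computes the reduced matrix of $\nu\pi^{-1}\nu$ as $B^\rmt$ for every $o$.
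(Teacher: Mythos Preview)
Your proposal is correct and follows precisely the paper's approach, which is the one-line combination of Theorem~\ref{thm:non-conjugate-cartans}, Proposition~\ref{prop:congruence-matrices}, and Lemma~\ref{lem:matrix-flip}. You even supply a detail the paper leaves implicit, namely that Lemma~\ref{lem:matrix-flip} (stated for $o=1$) extends verbatim to general $o$ with $\nu$ flipping the first two coordinates and fixing the third.
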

\begin{proof}
  This follows from combining Theorem \ref{thm:non-conjugate-cartans} with Proposition \ref{prop:congruence-matrices} and Lemma \ref{lem:matrix-flip}.
\end{proof}

In order to obtain a useful parametrisation of non-degenerate Cartan subalgebras, we have to identify those matrices that can arise as the reduced matrix of a permutation.  The following notation from matrix combinatorics allows us to describe them concisely.
\begin{notation}
  \label{not:row-column-sum-fixed}
  Let $a,b,c,d \in \NN$.  We let
  \begin{equation*}
    \rM(a,b,c,d)
    =
    \{ A \in \rM_{a,c}(\NN) \mid \forall i \in \{1, \dotsc, a\} : \,
    \sum_{j = 1}^c A_{ij} = b
    \text{ and }
    \forall j \in \{1, \dotsc, c\}: \, \sum_{i = 1}^a A_{ij} = d \}.
  \end{equation*}
\end{notation}

Theorem \ref{thm:cartan-classified-by-matrix} together with Theorem \ref{thm:classification-cartan-dimension-drop} associates with every non-degenerate Cartan subalgebra in $I_{m,n,o}$ the congruence class of a matrix in $\rM(mo, n, no, m)$.  The next proposition shows that this assignment is surjective.
\begin{proposition}
  \label{prop:characterise-reduced-matrices}
  The image of the map $\Sym(\ul m \times \ul n \times \ul o) \ra \rM_{\ul m \times \ul o, \, \ul n \times \ul o}(\NN)$ assigning to a permutation its reduced matrix is precisely $\rM(mo, n, no, m)$.

\end{proposition}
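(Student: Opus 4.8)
The plan is to prove the two inclusions separately, after first unravelling what the reduced matrix records.  Directly from Definition~\ref{def:reduced-matrix}, for $\sigma \in \Sym(\ul m \times \ul n \times \ul o)$ with reduced matrix $A$ one has, for all $(i,k) \in \ul m \times \ul o$ and $(j',k') \in \ul n \times \ul o$,
\begin{equation*}
  A_{(i,k),(j',k')}
  =
  \#\{i' \in \ul m \mid \sigma(i',j',k') \in \{i\} \times \ul n \times \{k\}\}
  \eqcomma
\end{equation*}
because the inner sum $\sum_{j \in \ul n} \delta_{\sigma(i',j',k'),(i,j,k)}$ equals $1$ exactly when the first and third coordinates of $\sigma(i',j',k')$ are $i$ and $k$.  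Writing $S_{j',k'} = \ul m \times \{j'\} \times \{k'\}$ and $T_{i,k} = \{i\} \times \ul n \times \{k\}$, this says $A_{(i,k),(j',k')} = \#\bigl(S_{j',k'} \cap \sigma^{-1}(T_{i,k})\bigr)$; note that the $S_{j',k'}$ form a partition of $\ul m \times \ul n \times \ul o$ into $no$ blocks of size $m$ and the $T_{i,k}$ form a partition into $mo$ blocks of size $n$.

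For the inclusion ``$\subseteq$'', fix $\sigma$ and its reduced matrix $A$.  Its entries are natural numbers by construction.  Summing a fixed row $(i,k)$ over all $(j',k') \in \ul n \times \ul o$ gives $\sum_{j \in \ul n}\sum_{(i',j',k')} \sigma_{(i,j,k)(i',j',k')} = \sum_{j \in \ul n} 1 = n$, since each row of a permutation matrix sums to $1$; summing a fixed column $(j',k')$ over all $(i,k) \in \ul m \times \ul o$ gives $\sum_{i' \in \ul m} \sum_{(i,j,k)} \sigma_{(i,j,k)(i',j',k')} = \sum_{i' \in \ul m} 1 = m$, since each column of a permutation matrix sums to $1$.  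Hence $A \in \rM(mo,n,no,m)$.

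For the inclusion ``$\supseteq$'', take an arbitrary $A \in \rM(mo,n,no,m)$ and build a permutation whose reduced matrix is $A$ by cutting the blocks $S_{j',k'}$ and $T_{i,k}$ to size.  The row condition on $A$ reads $\sum_{(j',k')} A_{(i,k),(j',k')} = n = \#T_{i,k}$ and the column condition reads $\sum_{(i,k)} A_{(i,k),(j',k')} = m = \#S_{j',k'}$.  Therefore one may partition each $S_{j',k'}$ into subsets $S_{j',k'}^{(i,k)}$ of cardinality $A_{(i,k),(j',k')}$ indexed by $(i,k) \in \ul m \times \ul o$, partition each $T_{i,k}$ into subsets $T_{i,k}^{(j',k')}$ of cardinality $A_{(i,k),(j',k')}$ indexed by $(j',k') \in \ul n \times \ul o$, pick for each pair a bijection $S_{j',k'}^{(i,k)} \to T_{i,k}^{(j',k')}$, and glue.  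Since the $S_{j',k'}^{(i,k)}$ partition the domain and the $T_{i,k}^{(j',k')}$ partition the codomain, this yields a bijection $\sigma \in \Sym(\ul m \times \ul n \times \ul o)$ with $S_{j',k'} \cap \sigma^{-1}(T_{i,k}) = S_{j',k'}^{(i,k)}$, so by the formula of the first paragraph the reduced matrix of $\sigma$ is exactly $A$.

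I do not expect a genuine obstacle here: the mathematical content is just that a non-negative integer matrix whose row sums match the block sizes of a partition of one side and whose column sums match the block sizes of a partition of the other side is realised by a block-respecting bijection, and this is immediate by subdividing the blocks.  The only thing requiring care is the bookkeeping with the composite index sets $\ul m \times \ul o$ and $\ul n \times \ul o$, and keeping straight which coordinate (the $\ul m$-coordinate of the source, the $\ul n$-coordinate of the target) is summed out in the definition of the reduced matrix.
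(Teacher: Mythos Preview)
Your proof is correct. Both you and the paper verify the forward inclusion by checking row and column sums of the permutation matrix, and for the reverse inclusion both exploit that the row sums of $A$ match $|T_{i,k}| = n$ and the column sums match $|S_{j',k'}| = m$. The difference is purely in execution: the paper gives an explicit formula for a preimage $\sigma$ by placing, in block $(i,k),(j',k')$, a diagonal $\{0,1\}$-matrix with $A_{(i,k),(j',k')}$ ones and then shifting it by powers of the cyclic permutations $(1\,2\,\cdots\,n)$ and $(1\,2\,\cdots\,m)$ according to the cumulative sums along the row and column, so that the ones never collide. Your argument is the abstract version of the same idea: rather than writing down specific subsets $S_{j',k'}^{(i,k)}$ and $T_{i,k}^{(j',k')}$ and specific bijections, you simply invoke that subsets of the required sizes exist because the sums are right. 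Your version is shorter and arguably cleaner; the paper's version has the minor advantage of being fully constructive.
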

\begin{proof}
  It is clear that the reduced matrix of every element in $\Sym(\ul m \times \ul n \times \ul o)$ lies in $\rM(mo, n, no, m)$.  We have to show that every element of $\rM(mo, n, no, m)$ arises in this way. Let  $A \in \rM(mo, n, no, m)$. Define a matrix $\tilde A \in \rM_{\ul m \times \ul o, \, \ul n \times \ul o}(\rM_{n,m}(\{0,1\}))$ as follows. Fix $(i,k) \in \ul m \times \ul o$ and $(j',k') \in \ul n \times \ul o$. Let $M \in \rM_{n,  m}(\{0,1\})$ be the diagonal matrix whose first $A_{(i,k),(j',k')}$ diagonal entries are one and whose other entries are all zero. Equipping $\ul m \times \ul o$ and $\ul n \times \ul o$ with the respective lexicographical order, we moreover set
\begin{equation*}
r = \sum_{(t,u) < (i,k)} A_{(t,u),(j',k')} \quad \text{and} \quad s = \sum_{(t,u) < (j',k')} A_{(i,k),(t,u)}.
\end{equation*}
The $(i,k),(j',k')$-th entry of $\tilde A$ is now given as
\begin{equation*}
{\tilde A}_{(i,k),(j',k')} = (1 \, 2 \, \dotsm \, n)^r M (1 \, 2 \, \dotsm \, m)^s \eqcomma
\end{equation*}
where $(1 \, 2 \, \dotsm \, m) \in \Sym(m)$ and $(1 \, 2 \, \dotsm \, n) \in \Sym(n)$ are the respective full shift permutation matrices. Since the sum in every row of $A$ is $n$ and the sum of every column of $A$ is $m$, it follows that every row and every column of $\tilde A$, when interpreted as a matrix in $\rM_{\ul m \times \ul n \times \ul o}(\{0,1\})$, has exactly one non-zero entry. In other words, $\tilde A$ is a permutation matrix and thus corresponds to some $\pi \in \Sym(\ul m \times \ul n \times \ul o)$. By construction, $A$ is the reduced matrix of $\pi$. This concludes the proof.
\end{proof}

\begin{theorem}
  \label{thm:classification-by-matrices}
  Conjugacy classes of non-degenerate Cartan subalgebras in $I_{m,n,o}$ are parametrised by congruence classes of matrices in $\rM(mo, n, no, m)$.
\end{theorem}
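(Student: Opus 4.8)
The plan is to assemble this statement from the three main results already established in this section, since it is in essence a bookkeeping corollary and carries no new mathematical content. First I would use Theorem~\ref{thm:classification-cartan-dimension-drop} to reduce the problem: every non-degenerate Cartan subalgebra of $I_{m,n,o}$ is conjugate by an automorphism to a twisted standard Cartan subalgebra $B_\sigma$. Conversely, every $B_\sigma$ is non-degenerate, since on $[0,\frac{1}{3}]$ and $[\frac{2}{3},1]$ it coincides with the standard Cartan subalgebra, whose fibres at the two endpoints are $\rD_m \ot 1 \ot \rD_o$ and $1 \ot \rD_n \ot \rD_o$, of dimensions $mo$ and $no$ (cf.\ the proof of Proposition~\ref{prop:standard-cartan-properties}). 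Hence the conjugacy classes of non-degenerate Cartan subalgebras of $I_{m,n,o}$ are exactly the conjugacy classes of twisted standard Cartan subalgebras.

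Next I would set up the matrix side. Fixing linear orderings of $\ul m \times \ul o$ and $\ul n \times \ul o$ — for concreteness the lexicographical ones — identifies $\rM_{\ul m \times \ul o,\, \ul n \times \ul o}(\NN)$ with $\rM_{mo, no}(\NN)$, and $\Sym(\ul m \times \ul o)$, $\Sym(\ul n \times \ul o)$ with $\Sym(mo)$, $\Sym(no)$ respectively. Under this identification the equivalence relation on reduced matrices generated by the moves of Proposition~\ref{prop:congruence-matrices} together with Lemma~\ref{lem:matrix-flip} is precisely congruence in the sense of Definition~\ref{def:congruence}; in particular the transpose move is available exactly when $mo = no$, i.e.\ when $m = n$, matching the hypothesis of Lemma~\ref{lem:matrix-flip}. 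Theorem~\ref{thm:cartan-classified-by-matrix} then says that $B_\sigma$ and $B_\pi$ are conjugate if and only if the reduced matrices of $\sigma$ and $\pi$ are congruent, so the rule sending $B_\sigma$ to the congruence class of the reduced matrix of $\sigma$ defines a well-defined injection from conjugacy classes of twisted standard Cartan subalgebras into congruence classes of matrices in $\rM_{mo,no}(\NN)$.

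Finally I would identify the image of this injection. One inclusion is immediate from Definition~\ref{def:reduced-matrix}: the reduced matrix of any element of $\Sym(\ul m \times \ul n \times \ul o)$ has all row sums equal to $n$ and all column sums equal to $m$, hence lies in $\rM(mo, n, no, m)$. The reverse inclusion is exactly Proposition~\ref{prop:characterise-reduced-matrices}, which produces, for each matrix in $\rM(mo, n, no, m)$, a permutation whose reduced matrix it is. Combining the two inclusions with the injection from the previous paragraph and the reduction from the first paragraph yields the desired bijection between conjugacy classes of non-degenerate Cartan subalgebras of $I_{m,n,o}$ and congruence classes of matrices in $\rM(mo, n, no, m)$.

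Since all of the genuine work is carried out in Theorems~\ref{thm:classification-cartan-dimension-drop} and~\ref{thm:cartan-classified-by-matrix} and in Proposition~\ref{prop:characterise-reduced-matrices}, I do not anticipate any real obstacle; the one place that deserves a moment of care is the verification that Definition~\ref{def:congruence} literally reproduces the equivalence relation coming from Proposition~\ref{prop:congruence-matrices} and Lemma~\ref{lem:matrix-flip} once the orderings are fixed, which reduces to the canonical isomorphisms $\Sym(\ul m \times \ul o) \cong \Sym(mo)$ and $\Sym(\ul n \times \ul o) \cong \Sym(no)$.
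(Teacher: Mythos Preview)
Your proposal is correct and follows essentially the same route as the paper, which simply cites Theorems~\ref{thm:classification-cartan-dimension-drop} and~\ref{thm:cartan-classified-by-matrix} together with Proposition~\ref{prop:characterise-reduced-matrices}. You have merely made explicit the bookkeeping (non-degeneracy of each $B_\sigma$, the identification $\Sym(\ul m \times \ul o)\cong\Sym(mo)$, and that the transpose move arises exactly when $m=n$) that the paper leaves implicit.
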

\begin{proof}
  This follows from Theorems \ref{thm:classification-cartan-dimension-drop} and \ref{thm:cartan-classified-by-matrix} together with Proposition \ref{prop:characterise-reduced-matrices}.
\end{proof}

\subsection{The asymptotic number of Cartan subalgebras}
\label{sec:asymptotics}

Theorem \ref{thm:classification-by-matrices} in principle allows to apply results from enumerative combinatorics providing asymptotic formulae for the cardinality of $\rM(a,b,c,d)$ (see Notation \ref{not:row-column-sum-fixed}).  We refer to \cite{canfieldmckay10} and references therein for the reader who wants to know more about this topic.  However, there is no exact formula for the number of such matrices.  Further, the congruence relation introduced in Definition \ref{def:congruence} does not make part of the combinatorics literature, which obstructs a direct application.  Crude lower bounds on the number of congruence classes in $\rM(mo,n,no,m)$ can for example be given by appealing to the possible entries of a matrix in $\rM(mo,n,no,m)$ as a subset of $\{1, \dotsc, n\}$.  Despite this lower bound, it appears to be an interesting combinatorial problem to derive asymptotic formula for congruence classes in $\rM(a,b,c,d)$.



\subsection{Explicit formula}
\label{sec:explicit-formula}

In Example \ref{ex:cartans-two-two}, we saw that the dimension drop algebra $I_{2,2}$ has exactly 2 Cartan subalgebras up to conjugacy.  This is the base case for two one-parameter series of dimension drop algebras that admit an explicit formula for the number of their Cartan subalgebras.  Both results have interesting consequences.

 
The next proposition counts the number of Cartan subalgebras in stabilisations $I_{2,2,o}$ of $I_{2,2}$, making use of Theorem \ref{thm:classification-by-matrices}.   In Section \ref{sec:homeomorphism}, these results will provide examples of stabilised dimension drop algebras for which we will not be able to recover Cartan subalgebras up to conjugacy from the homeomophism type of their spectra.  We denote by $p$ the partition function, which counts the number of partitions of a non-negative integer.
\begin{proposition}
  \label{prop:cartans-two-o-two}
  In $I_{2,2,o}$ there are $p(2o)$ non-degenerate Cartan subalgebras up to conjugacy.
\end{proposition}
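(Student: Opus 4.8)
The plan is to invoke Theorem~\ref{thm:classification-by-matrices}, which identifies conjugacy classes of non-degenerate Cartan subalgebras of $I_{2,2,o}$ with congruence classes of matrices in $\rM(2o,2,2o,2)$, and then to count the latter directly. Writing $N = 2o$, I claim that congruence classes in $\rM(N,2,N,2)$ are in bijection with partitions of $N$, which immediately gives the count $p(2o)$ (and, for $o = 1$, recovers the value $2$ obtained in Example~\ref{ex:cartans-two-two}, since $p(2) = 2$).

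To set up this bijection, I would first normalise an arbitrary $A \in \rM(N,2,N,2)$. Since $A$ is a nonnegative integer matrix all of whose row and column sums equal $2$, the associated bipartite multigraph -- joining vertex $i$ on the left to vertex $j$ on the right with $A_{ij}$ edges -- is $2$-regular, and being bipartite its edge set splits into two perfect matchings; that is, $A = P_1 + P_2$ for permutation matrices $P_1, P_2 \in \Sym(N)$. Left multiplication by $P_1^{-1}$ is a row permutation, hence a congruence, so up to congruence we may assume $A = I_N + \tau$ with $\tau \in \Sym(N)$ (and conversely $I_N + \tau \in \rM(N,2,N,2)$ for every $\tau$). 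Next I would extract an invariant by interpreting $I_N + \tau$ as the biadjacency matrix of a bipartite multigraph $G_\tau$ on vertex sets $R = \ul N$ and $C = \ul N$, with $R_i$ and $C_j$ joined by $(I_N + \tau)_{ij}$ edges. A short check shows $G_\tau$ is $2$-regular, with the path $R_i - C_i - R_{\tau(i)} - C_{\tau(i)} - \cdots$ revealing that each cycle of $\tau$ of length $k$ yields exactly one connected component of $G_\tau$ using $k$ vertices on each side (a $2k$-cycle of the graph when $k \geq 2$, a double edge when $k = 1$). Hence the cycle type of $\tau$, a partition of $N$, is determined by $I_N + \tau$. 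Moreover, for $\rho \in \Sym(N)$ we have $\rho(I_N + \tau)\rho^{-1} = I_N + \rho\tau\rho^{-1}$, which is a congruence (row permutation by $\rho$, column permutation by $\rho^{-1}$), so $I_N + \tau$ and $I_N + \tau'$ are congruent whenever $\tau$ and $\tau'$ have equal cycle type.

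The remaining point -- that congruent matrices $I_N + \tau$ and $I_N + \tau'$ \emph{force} $\tau$ and $\tau'$ to have equal cycle type -- is the step I expect to need the most care, although it is brief. Each of the three congruence operations induces an isomorphism of the abstract multigraph underlying $G_\tau$: permuting rows or columns relabels $R$ or $C$, while transposition swaps the roles of $R$ and $C$, which is harmless because every component of $G_\tau$ is balanced between the two sides (equivalently, $(I_N+\tau)^{\rmt} = I_N + \tau^{-1}$ and $\tau^{-1}$ has the cycle type of $\tau$). Therefore the multiset of component sizes of $G_\tau$, equivalently the cycle type of $\tau$, is a congruence invariant, so $[\,I_N + \tau\,] \mapsto (\text{cycle type of }\tau)$ is a well-defined injection from congruence classes in $\rM(N,2,N,2)$ to partitions of $N$; combined with the surjectivity and the preceding paragraph it is a bijection. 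Specialising $N = 2o$ then gives exactly $p(2o)$ congruence classes, hence $p(2o)$ non-degenerate Cartan subalgebras of $I_{2,2,o}$ up to conjugacy.
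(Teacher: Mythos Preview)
Your argument is correct and reaches the same conclusion as the paper, but by a genuinely different route. The paper works directly with the matrix entries: it runs an inductive algorithm on $A\in\rM(2o,2,2o,2)$ to bring it to a block-diagonal normal form with blocks of sizes $1,2,3,\dotsc$, and then distinguishes these normal forms by the multiset of connected-component sizes of an auxiliary graph. You instead invoke the decomposition of a $2$-regular bipartite multigraph into two perfect matchings to write $A=P_1+P_2$, normalise to $I_N+\tau$, and then identify congruence classes with conjugacy classes in $\Sym(N)$ via cycle type. Your normalisation is cleaner and makes the appearance of partitions transparent (conjugacy classes in $\Sym(N)$ are partitions of $N$), at the cost of quoting K\"onig's edge-colouring theorem; the paper's algorithmic approach is entirely self-contained but hides the permutation-theoretic picture. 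The two normal forms are of course congruent to one another: the paper's $k\times k$ block is congruent to $I_k+(1\,2\,\dotsm\,k)$, and the graph invariant the paper uses to separate normal forms is the same connected-component datum that you read off as the cycle type of $\tau$.
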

\begin{proof}
 By Theorem \ref{thm:classification-by-matrices}, it suffices to classify congruence classes of matrices in $\rM(2o,2,2o,2)$.  We will provide a symmetric normal form for congruence classes in $\rM(2o,2,2o,2)$ only using the $\Sym(2o) \times \Sym(2o)$ action.  A normal form is given by block diagonal matrices with the following blocks ordered by increasing size.
  \begin{equation*}
    \left (
      \begin{array}{c}
        2
      \end{array}
    \right ) \eqcomma
    \quad
    \left (
      \begin{array}{cc}
        1 & 1 \\
        1 & 1
      \end{array}
    \right ) \eqcomma
    \quad
    \left (
      \begin{array}{ccc}
        1 & 1 & 0 \\
        1 & 0 & 1 \\
        0 & 1 & 1
      \end{array}
    \right ) \eqcomma
    \quad
        \left (
      \begin{array}{cccc}
        1 & 1 & 0 & 0 \\
        1 & 0 & 1 & 0 \\
        0 & 1 & 0 & 1 \\
        0 & 0 & 1 & 1
      \end{array}
    \right ) \eqcomma \dotsc
  \end{equation*}
    Fix $o \in \NN$ and assume that we have normal forms in $\rM(2o', 2, 2o', 2)$ for $o' < o$.  Let $A \in \rM(2o,2,2o,2)$.  We may replace $A$ by a congruent matrix, so that $A_{11} \neq 0$ holds.  If $A_{11} = 2$, then it is the only non-zero entry in the first row and the first column of $A$, so $A$ is block diagonal and we may apply the induction hypothesis to the matrix obtained from $A$ by erasing the first row and the first column in order to obtain a normal form for $A$.  If $A_{11} = 1$, then we may further arrange for $A_{12} = 1 = A_{21}$ by passing to a congruent matrix.  Assume now that we arrived at a matrix congruent to $A$ which satisfies $A_{11} = A_{k,k - 1} = A_{k - 1, k} = 1$ for all $k \leq k_0$ for some $k_0 \geq 2$.  If $A_{k_0, k_0} = 1$, then $A$ is block diagonal and we obtain a normal form as before.  Otherwise, we can replace $A$ by a congruent matrix that satisfies $A_{k_0, k_0 + 1} = A_{k_0 + 1, k_0} = 1$.  This algorithm terminates and proves that $A$ is congruent to a normal form as described before.

    Next we show that the provided normal forms are pairwise non-congruent.  To this end, associate with a matrix in $\rM(2o,2, 2o,2)$ the graph whose vertices are indexed by $\ul{2o}\times \ul{2o}$ and whose edges are given by the rule $(i,j) \sim (i',j')$ if and only if the following two conditions are satisfied: $A_{ij} = 1 = A_{i'j'}$ and at the same time $i  = i'$ or $j = j'$.  Then the multiset of the size of connected components of this graph is a congruence invariant of $A$.  It distinguishes the normal forms provided before, because the block of size $k \times k$, $k \geq 2$ described above produces a single non-trivial connected component with $2k$ vertices.  Since the size of the blocks of our normal forms runs through all positive natural numbers and they have to fill the diagonal of a $2o \times 2o$ matrix, we conclude that there are $p(2o)$ congruence classes in $\rM(2o,2,2o,2)$.  This finishes the proof of the proposition.
\end{proof}

\begin{remark}
  \label{rem:not-isomorphism-conjugacy}
  Proposition \ref{prop:cartans-two-o-two} provides the exceptional examples of stabilised dimension drop algebras in which isomorphism and conjugacy of non-degenerate Cartan subalgebras is not the same.  This is formally stated in Proposition \ref{prop:no-conjugacy-homeomorphism}, which stands in contrast to the positive result provided by Theorem \ref{thm:conjugacy-homeomorphism}.
\end{remark}

The following proposition treats the other class of dimension drop algebras admitting an explicit computation of the number of their Cartan subalgebras.
\begin{proposition}
  \label{prop:cartans-two-n}
  The dimension drop algebra $I_{2,n}$ has exactly $\lfloor \frac{n}{2} \rfloor + 1$ non-degenerate Cartan subalgebras up to conjugacy.   In particular, if $n$ is odd, then $I_{2,n}$ has $\frac{n + 1}{2}$ Cartan subalgebras up to conjugacy.
\end{proposition}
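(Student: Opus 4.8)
The plan is to reduce the statement to a small combinatorial count via Theorem~\ref{thm:classification-by-matrices}, which identifies conjugacy classes of non-degenerate Cartan subalgebras of $I_{2,n} = I_{2,n,1}$ with congruence classes of matrices in $\rM(mo,n,no,m) = \rM(2,n,n,2)$, that is, $2 \times n$ matrices over $\NN$ whose two rows each sum to $n$ and whose $n$ columns each sum to $2$. So it suffices to show that $\rM(2,n,n,2)$ has exactly $\lfloor n/2 \rfloor + 1$ congruence classes in the sense of Definition~\ref{def:congruence}.

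First I would classify the columns: a column of such a matrix has entries in $\NN$ summing to $2$, hence is one of $(2,0)^\rmt$, $(1,1)^\rmt$, $(0,2)^\rmt$. Let $a$, $b$, $c$ count the columns of each respective type; then the column-sum condition reads $a + b + c = n$, while the two row sums are $2a + b$ and $b + 2c$. Setting both equal to $n$ forces $a = c$ and $b = n - 2a$. Consequently, up to permuting columns a matrix in $\rM(2,n,n,2)$ is completely determined by the single parameter $a$, which ranges over $\{0, 1, \dots, \lfloor n/2 \rfloor\}$ (the only constraint being $b = n - 2a \geq 0$). In particular, permuting columns by some $\rho_2 \in \Sym(n)$ already brings any two matrices with the same value of $a$ into a common normal form (say $a$ columns $(2,0)^\rmt$, then $b$ columns $(1,1)^\rmt$, then $a$ columns $(0,2)^\rmt$), so matrices sharing the parameter $a$ are congruent.

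It remains to see that congruence does not identify matrices with different values of $a$. The only further moves in the congruence relation are the row swap $\rho_1 \in \Sym(2)$, which exchanges the column types $(2,0)^\rmt \leftrightarrow (0,2)^\rmt$ and therefore sends $(a,b,c)$ to $(c,b,a) = (a,b,a)$ since $a = c$, and---only in the square case $n = 2$, already handled in Example~\ref{ex:cartans-two-two}---the transpose, which fixes each of these matrices. Hence both moves preserve $a$, so the assignment $A \mapsto a(A)$ descends to a bijection from congruence classes of $\rM(2,n,n,2)$ onto $\{0,1,\dots,\lfloor n/2\rfloor\}$, giving $\lfloor n/2\rfloor + 1$ conjugacy classes of non-degenerate Cartan subalgebras. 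The final claim is arithmetic: for odd $n$, $\lfloor n/2 \rfloor + 1 = \tfrac{n-1}{2} + 1 = \tfrac{n+1}{2}$.

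There is no genuine obstacle here; the only point requiring a little care is confirming that the extra identifications permitted by Definition~\ref{def:congruence}---the row permutation in $\Sym(2)$, and the transpose in the degenerate case $n = 2$---act trivially on the parameter $a$, which is immediate once one has derived $a = c$.
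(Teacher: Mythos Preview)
Your argument for the main count is correct and essentially identical to the paper's proof: both parametrise congruence classes in $\rM(2,n,n,2)$ by the number of columns equal to $(2,0)^\rmt$ (the paper's $k$ is your $a$), observe that this equals the number of $(0,2)^\rmt$-columns, and check that the remaining congruence moves preserve this invariant.

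There is one genuine gap in the final sentence. The second statement of the proposition drops the word ``non-degenerate'': it asserts that for odd $n$ the algebra $I_{2,n}$ has exactly $\frac{n+1}{2}$ Cartan subalgebras up to conjugacy. Your proof only shows there are $\frac{n+1}{2}$ \emph{non-degenerate} Cartan subalgebras and then reduces the passage to the general statement to ``arithmetic''. But the arithmetic $\lfloor n/2\rfloor + 1 = \tfrac{n+1}{2}$ is not the point; what you must invoke is Theorem~\ref{thm:cartans-non-degenerate}: since $2$ and $n$ are coprime when $n$ is odd, every Cartan subalgebra of $I_{2,n}$ is automatically non-degenerate. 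Without this step the conclusion about all Cartan subalgebras is unjustified.
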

\begin{proof}
  Example \ref{ex:cartans-two-two} showed that there are exactly 2 non-degenerate Cartan subalgebras in $I_{2,2}$, so that we may assume $n \geq 3$.  By Theorem \ref{thm:classification-by-matrices}, we have to count congruence classes in $\rM(2,n, n, 2)$.  We provide a normal form.  Let $A \in \rM(2,n, n, 2)$.  Let $k = |\{j \in \ul n \mid A_{1j} = 2\}|$.  Replacing $A$ by a congruent matrix, we may assume that $A_{11}, A_{12}, \dotsc, A_{1k} = 2$.  Since each column of $A$ sums to 2, all entries of $A$ are elements of $\{0,1,2\}$.  Moreover, each row of $A$ sums to $n$, so that we have $k = |\{j \in \ul n \mid A_{1j} = 0\}|$.  Replacing $A$ by a congruent matrix, we may assume that $A_{1, k+1}, A_{1, k+2}, \dotsc, A_{1,2k} = 0$.  Note that $A_{1, 2k + 1}, \dotsc, A_{1, n} = 1$ follows.  Moreover, $A_{2j} = 2 - A_{1j}$ for all $j \in \ul n$.  This is our normal form for $A$.  Two different normal forms are distinguished by $|\{(i,j) \in \ul 2 \times \ul n \mid A_{ij} = 2\}| \in 2\ZZ$,  which hence is a complete invariant for congruence classes in $\rM(2,n,n,2)$.  So indeed there are exactly $\lfloor \frac{n}{2} \rfloor + 1$ congruence classes.

  Let us now consider the case when $n$ is odd. In this case, Theorem \ref{thm:cartans-non-degenerate} shows that all Cartan subalgebras in $I_{2, n}$ are non-degenerate.  Further,
  \begin{equation*}
    \lfloor \frac{n}{2} \rfloor + 1
    =
    \frac{n + 1}{2}
    \eqcomma
  \end{equation*}
  which finishes the proof of the proposition.
\end{proof}

\begin{corollary}
  \label{cor:exactly-n-cartans}
    For every $n \in \NN$ there is a subhomogeneous \Cstar-algebra that has exactly $n$ Cartan subalgebras up to conjugacy.
\end{corollary}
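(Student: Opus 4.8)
The plan is to realise every positive integer $n$ as the number of Cartan subalgebras, up to conjugacy, of a single non-stabilised dimension drop algebra, using the explicit count from Proposition~\ref{prop:cartans-two-n}. Concretely, I would take the algebra $I_{2,2n-1}$: the crucial feature is that the second parameter $2n-1$ is odd, which is exactly what makes Theorem~\ref{thm:cartans-non-degenerate} applicable and, at the same time, makes the resulting count equal to $n$.

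Here are the steps. Fix $n \in \NN_{\geq 1}$ and set $N := 2n-1$, an odd natural number with $\gcd(2,N)=1$. First, $I_{2,N}$ is subhomogeneous, being a \Cstar-subalgebra of the $2N$-homogeneous algebra $\cont([0,1], \rM_2 \ot \rM_N)$; in fact its irreducible representations are the evaluations at points of $[0,1]$ composed with an irreducible representation of the corresponding fibre, so it is $2N$-subhomogeneous. Second, since $2$, $N$ and $1$ are pairwise coprime, Theorem~\ref{thm:cartans-non-degenerate} shows that every Cartan subalgebra of $I_{2,N}$ is non-degenerate, so that the number of Cartan subalgebras of $I_{2,N}$ up to conjugacy by an automorphism coincides with the number of its non-degenerate Cartan subalgebras. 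Third, Proposition~\ref{prop:cartans-two-n} evaluates this number to $\lfloor N/2 \rfloor + 1 = \frac{N+1}{2} = n$. Hence $I_{2,2n-1}$ is a subhomogeneous \Cstar-algebra with exactly $n$ Cartan subalgebras up to conjugacy. Since $\frac{N+1}{2}$ runs through all of $\NN_{\geq 1}$ as $N$ runs through the odd positive integers, every value of $n$ is indeed attained, completing the proof.

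I do not foresee a real obstacle, as this is essentially a direct corollary of Proposition~\ref{prop:cartans-two-n}; the only point worth emphasising is the choice of an odd parameter. If instead one took $I_{2,n}$ for even $n$, then $2$ and $n$ are not coprime and Theorem~\ref{thm:cartans-non-degenerate} fails: such $I_{2,n}$ also carries degenerate Cartan subalgebras (compare the construction in the proof of Theorem~\ref{thm:cartans-non-degenerate} and Example~\ref{ex:degenerate-cartan-non-coprime}), so the count $\lfloor n/2 \rfloor + 1$ of Proposition~\ref{prop:cartans-two-n} would be strictly below the total. For the same reason the numbers $p(2o)$ of Proposition~\ref{prop:cartans-two-o-two} do not settle the statement, quite apart from the fact that the partition function misses most integers. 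Passing to the odd parameter $N=2n-1$ removes the degenerate Cartan subalgebras and pins the count down on the nose.
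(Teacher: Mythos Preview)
Your argument is essentially the paper's own: for odd $N$ you invoke Proposition~\ref{prop:cartans-two-n} together with Theorem~\ref{thm:cartans-non-degenerate} to get exactly $\frac{N+1}{2}$ Cartan subalgebras in $I_{2,N}$, and then let $N$ run through the odd positive integers. The paper uses the shift $I_{2,2n+1}$ (which yields $n+1$ Cartan subalgebras for $n \geq 1$) and handles the value $1$ separately via Theorem~\ref{thm:li-renault}; your choice $I_{2,2n-1}$ absorbs the value $1$ into the same family via $I_{2,1}$, which is fine since $I_{2,1} \cong J_{1,2}$ and Lemma~\ref{lem:unique-masa-one-sided-dimension-drop} gives uniqueness there directly. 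Note, though, that the written proof of Proposition~\ref{prop:cartans-two-n} only treats $n \geq 2$, so for $n=1$ you should really point to the one-sided result rather than to that proposition.

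There is one genuine omission: in this paper $\NN$ contains $0$ (hence the notation $\NN_{\geq 1}$ used elsewhere), and the statement asks for every $n \in \NN$. You only treat $n \in \NN_{\geq 1}$. The paper's proof covers $n = 0$ by citing the examples of homogeneous \Cstar-algebras without any Cartan subalgebra from \cite[Section~2.2]{lirenault17}. You should add this case; no dimension drop algebra will do, since they all contain at least the standard Cartan subalgebra.
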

\begin{proof}
  In \cite[Section 2.2]{lirenault17}, examples of homogeneous \Cstar-algebras without any Cartan subalgebras are presented. Furthermore, every homogeneous \Cstar-algebras over a contractible space has a unique Cartan subalgebra up to conjugacy; see Theorem~\ref{thm:li-renault}. For $n \geq 1$, the dimension drop algebra $I_{2, 2n+1}$ has exactly $n+1$ Cartan subalgebras up to conjugacy by Proposition \ref{prop:cartans-two-n}.
\end{proof}

\begin{remark}
  \label{rem:infinitely-many-cartans}
  Based on Corollary \ref{cor:exactly-n-cartans} it is possible to provide examples of \Cstar-algebras with exactly continuum many Cartan subalgebras up to conjugacy.  Let $A = \bigoplus_{n \geq 2} I_{2,2n + 1}$.  Every automorphism $\alpha \in \Aut(A)$ satisfies $\alpha(I_{2,2n + 1}) = I_{2,2n + 1}$ for all $n \geq 2$, since $I_{2,2n + 1}$ is $2n + 1$-subhomogeneous.  Further, it is easy to check that a Cartan subalgebra of $A$ is a direct sum of Cartan subalgebras of $I_{2,2n + 1}$ for $n \geq 2$.  Combining these two observations with Proposition \ref{prop:cartans-two-n}, we find that Cartan subalgebras of $A$ are parametrised by the product set $\prod_{n \geq 2} \ul{1 +  n}$ whose cardinality is the continuum.
\end{remark}


\section{Isomorphism and conjugacy}
\label{sec:homeomorphism}

Our next aim is to show that although dimension drop algebras do not have a unique Cartan subalgebra up to conjugacy, often the next to best possible result is true.  Often Cartan subalgebras in dimension drop algebras are classified by their spectrum - which is their only intrinsic invariant.  We start by giving a concrete model for the spectrum of the twisted standard Cartan subalgebras considered in Example \ref{ex:twisted-standard-Cartan}
\begin{proposition}
  \label{prop:spectrum-twisted-cartan}
  Let $\sigma \in \Sym(\ul m \times \ul n \times \ul o)$. Then the spectrum of the twisted standard Cartan subalgebra $B_\sigma \subset I_{m,n,o}$ is homeomorphic with
  \begin{equation*}
    (\ul m \times \ul n \times \ul o) \times \bigl ( [0,1^-] \sqcup [1^+, 2] \bigr ) / \sim
    \eqcomma
  \end{equation*}
where the equivalence relation $\sim$ is given by the following three types of identifications.
\begin{align*}
  (i,j,k,0) & \sim (i,j',k, 0) 
  & \quad \text{ for all } i \in \ul m,  j, j' \in\ul n \text{ and } k \in \ul o \eqcomma  \\
  (i,j,k,2) & \sim (i',j,k, 2)
  & \quad \text{ for all } i,i' \in \ul m \text{ and } j \in \ul n, k \in \ul o \eqcomma \\
  (i,j,k,1^-) & \sim (\sigma(i,j,k), 1^+)
  & \quad \text{ for all } i \in \ul m, j \in \ul n \text{ and } k \in \ul o
    \eqstop
\end{align*}
\end{proposition}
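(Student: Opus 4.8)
The plan is to realise $\widehat{B_\sigma}$ as the total space of the bundle of characters of the fibres of $B_\sigma$ over $[0,1]$, and to compute that bundle locally. Since $B_\sigma$ is a unital commutative \Cstar-algebra, $\widehat{B_\sigma}$ is compact Hausdorff, and the inclusion of its centre $\cont([0,1]) \hookrightarrow B_\sigma$ presents $\widehat{B_\sigma}$ as a space fibred over $[0,1]$ whose fibre over $t$ is the (finite) set of minimal projections of the fibre algebra $(B_\sigma)_t$. A partition-of-unity argument, exactly as in the proofs of Propositions~\ref{prop:standard-cartan-properties} and~\ref{prop:unique-cartan-one-sided-dimension-drop}, shows that $B_\sigma$ is recovered from its restrictions to $[0,\tfrac13]$, $[\tfrac13,\tfrac23]$ and $[\tfrac23,1]$ by gluing at $t = \tfrac13$ and $t = \tfrac23$, so that dually $\widehat{B_\sigma}$ is the union of the three corresponding local spectra glued along the finite fibres over $\tfrac13$ and $\tfrac23$. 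It then suffices to identify the three local spectra and the two gluings; the claimed quotient space is compact Hausdorff (its defining relation $\sim$ has closed graph), so the evident continuous bijection between it and $\widehat{B_\sigma}$ will be a homeomorphism.

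Over $[0,\tfrac13]$ the algebra $B_\sigma$ restricts to the standard Cartan subalgebra of a one-sided dimension drop algebra, namely $\{f \in \cont([0,\tfrac13], \rD_m \ot \rD_n \ot \rD_o) : f(0) \in \rD_m \ot 1 \ot \rD_o\}$, whose spectrum is $(\ul m \times \ul n \times \ul o) \times [0,\tfrac13]$ modulo $(i,j,k,0) \sim (i,j',k,0)$; symmetrically the restriction over $[\tfrac23,1]$ has spectrum $(\ul m \times \ul n \times \ul o) \times [\tfrac23,1]$ modulo $(i,j,k,1) \sim (i',j,k,1)$. Over $[\tfrac13,\tfrac23]$, conjugation by $u^*$ identifies $B_\sigma|_{[\tfrac13,\tfrac23]}$ with $\cont([\tfrac13,\tfrac23], \rD_m \ot \rD_n \ot \rD_o)$, so its spectrum is $(\ul m \times \ul n \times \ul o) \times [\tfrac13,\tfrac23]$, where the label $(i,j,k)$ corresponds over $t$ to the minimal projection $u_t(e_{ii} \ot e_{jj} \ot e_{kk})u_t^*$ of $(B_\sigma)_t$.

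I would then perform the two gluings. Over $t = \tfrac23$, where $u_{\tfrac23} = 1$, the charts over $[\tfrac13,\tfrac23]$ and over $[\tfrac23,1]$ agree label-for-label, so they patch into one copy of $(\ul m \times \ul n \times \ul o) \times [\tfrac13,1]$ that near $t = 1$ still collapses the $\ul m$-coordinate. Over $t = \tfrac13$, where $u_{\tfrac13} = \sigma$, the minimal projection carrying the label $(i,j,k)$ of the $[\tfrac13,\tfrac23]$-chart, namely $u_t(e_{ii} \ot e_{jj} \ot e_{kk})u_t^*$, limits as $t \to \tfrac13^+$ to $\sigma(e_{ii} \ot e_{jj} \ot e_{kk})\sigma^{-1}$, a minimal projection of $\rD_m \ot \rD_n \ot \rD_o$ whose position is governed by $\sigma$; matching this against the labels of the $[0,\tfrac13]$-chart shows that the two charts are glued over $t = \tfrac13$ by $\sigma$ --- the only point where the twist enters --- which produces exactly the identification $(i,j,k,1^-) \sim (\sigma(i,j,k),1^+)$. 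Renaming the level $t = \tfrac13$ to $1$ (with $1^-$ on the $[0,\tfrac13]$-side and $1^+$ on the merged $[\tfrac13,1]$-side) and the level $t = 1$ to $2$, the resulting glued space is precisely $(\ul m \times \ul n \times \ul o) \times ([0,1^-] \sqcup [1^+,2])/\sim$ with the three listed relations.

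The only step I expect to need real care --- and it is bookkeeping, not a genuine obstacle --- is the gluing over $t = \tfrac13$: one must pin down precisely which minimal projection of the twisted fibre $u_t(\rD_m \ot \rD_n \ot \rD_o)u_t^*$ limits, as $t \to \tfrac13^+$, to which minimal projection of $\rD_m \ot \rD_n \ot \rD_o$, and this is where the permutation $\sigma$ appears, via $u_{\tfrac13} = \sigma$ and the permutation-matrix conventions fixed in the preliminaries. Here one uses freely that the trivialising unitary $u$ is immaterial up to conjugacy (as observed in Example~\ref{ex:twisted-standard-Cartan}), so that it may be taken constant near the two endpoints of $[\tfrac13,\tfrac23]$, which makes both patchings transparent. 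As a sanity check, for $\sigma = \id$ the gluing over $\tfrac13$ is the identity, the two copies of the fibre over $1$ merge, and one recovers the description of $\widehat{I_{m,n,o}}$ from Proposition~\ref{prop:standard-cartan-properties}.
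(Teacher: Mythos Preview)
Your approach is essentially the same as the paper's: decompose $B_\sigma$ over the three subintervals $[0,\tfrac13]$, $[\tfrac13,\tfrac23]$, $[\tfrac23,1]$, identify the spectrum of each restriction, and glue. The paper's own proof is in fact terser than yours --- it simply records the spectra over the two outer intervals and then declares that ``these parts are glued by the spectrum of $(B_\sigma)_{[\frac{1}{3},\frac{2}{3}]}$'' --- whereas you spell out the gluing mechanism and the role of $u_{1/3}=\sigma$ explicitly.

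One small bookkeeping point, exactly of the kind you anticipated: from your own computation, the label $(i,j,k)$ on the $[\tfrac13,\tfrac23]$-side limits at $t=\tfrac13$ to the projection at position $\sigma(i,j,k)$, so the identification you derive is $(\sigma(i,j,k),1^-)\sim(i,j,k,1^+)$, i.e.\ $(i,j,k,1^-)\sim(\sigma^{-1}(i,j,k),1^+)$ rather than $(i,j,k,1^-)\sim(\sigma(i,j,k),1^+)$. This is harmless for the proposition since the two quotients are manifestly homeomorphic (relabel one side by $\sigma$), but it is worth getting the direction straight if you later need the explicit CW-structure, as in Proposition~\ref{prop:spectrum-geometric realisation}.
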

\begin{proof}
  Let us first recall the definition of the twisted standard Cartan subalgebra associated with $\sigma$.  We fix a unitary
  \begin{equation*}
    u \in \cont([\frac{1}{3}, \frac{2}{3}], \rM_m \ot \rM_n \ot \rM_o)
  \end{equation*}
  such that $u_{\frac{1}{3}} = \sigma$ and $u_{\frac{2}{3}} = 1$.  Then 
  \begin{multline*}
    B_\sigma
    =
    \{
    f \in I_{m,n,o} \mid
    f(t) \in \rD_m \ot \rD_n \ot \rD_o \text{ for } t \in [0, \frac{1}{3}] \cup [\frac{2}{3}, 1] \\
    \text{ and }
    f(t) \in u_t(\rD_m \ot \rD_n \ot \rD_o)u_t^* \text{ for } t \in [\frac{1}{3}, \frac{2}{3}]
    \}
    \eqstop
  \end{multline*}
  The spectrum of $B_\sigma|_{[0, \frac{1}{3}]}$ is homeomorphic with $\ul m \times \ul n \times \ul o \times [0,1^-] / \sim$, with the identifications given by 
  \begin{align*}
    (i,j,k,0) & \sim (i,j',k, 0) 
    & \quad \text{ for all } i \in \ul m,  j, j' \in\ul n \text{ and } k \in \ul o
      \eqstop
  \end{align*}
  Similarly, the spectrum of $(B_\sigma)_{[\frac{2}{3}, 1]}$ is homeomorphic with $\ul m \times \ul n \times \ul o \times [1^+,2] / \sim$, with the identifications given by
  \begin{align*}
    (i,j,k,2) & \sim (i',j,k, 2)
    & \quad \text{ for all } i,i' \in \ul m \text{ and } j \in \ul n, k \in \ul o
    \eqstop
  \end{align*}
  These parts a glued by the spectrum of $(B_\sigma)_{[\frac{1}{3}, \frac{2}{3}]}$, providing the statement of the proposition.
\end{proof}


\begin{notation}
  \label{not:spectrum-twisted-cartan}
  Given $\sigma \in \Sym(\ul m \times \ul n \times \ul o)$, we fix notation for certain points in the space
  \begin{equation*}
    (\ul m \times \ul n \times \ul o) \times \bigl ( [0,1^-] \sqcup [1^+, 2] \bigr ) / \sim
    \eqcomma
  \end{equation*}
  described in Proposition \ref{prop:spectrum-twisted-cartan}.  We denote by $(i, *, k, 0)$ the class of the points $(i,j,k,0)$ for $i \in \ul m$, $j \in \ul n$, $k \in \ul o$.  Analogously, we denote by $(*, j, k, 2)$ the class of the points $(i,j,k,2)$ for $i \in \ul m$, $j \in \ul n$, $k \in \ul o$.
\end{notation}

Having the model of Proposition \ref{prop:spectrum-twisted-cartan} at hand, we can describe the spectrum of a twisted standard Cartan subalgebra as the geometric realisation of a graph.  Let us fix the formalism of a graph that we use in the sequel.
\begin{notation}
  \label{not:graph}
  A graph $\Gamma$ is a triple $\Gamma = (\rV(\Gamma), \rE(\Gamma), a)$ with $\rV(\Gamma)$ interpreted as the set of vertices, $\rE(\Gamma)$ interpreted as the set of edges and $a: \rE(\Gamma) \ra \cP_{\leq 2}(\rV(\Gamma))$ the adjacency map.  We call 
 \begin{equation*}  
  |\{ e \in \rE(\Gamma) \mid v \in a(e)\}|
 \end{equation*}
 the valency of $v \in \rV(\Gamma)$. A graph all of whose vertices have the same valency $n \in \NN$ is called $n$-regular.

  We say that two vertices $v_1, v_2$ of $\Gamma$ are adjacent if there is an edge $e$ of $\Gamma$ such that $a(e) = \{v_1, v_2\}$.  A bi-partite graph is  a graph $\Gamma$ admitting a partition $\rV(\Gamma) = V_1 \sqcup V_2$ such that no vertex from $V_i$ is adjacent to a vertex of $V_i$, $i \in \{1, 2\}$.  If all vertices from $V_1$ have valency $m$ and all vertices from $V_2$ have valency $n$, we call $\Gamma$ an $(m,n)$-semi-regular bi-partite graph.  

  If $\Gamma$ is a finite bi-partite graph and $v_1, v_2, \dotsc, v_m, w_1, \dotsc, w_n$ is an enumeration of the vertices such that the sets $\{v_1, \dotsc, v_m\}$ and $\{w_1, \dotsc, w_n\}$ witness the fact that $\Gamma$ is bi-partite, then the adjacency matrix of $\Gamma$ is the $m \times n$ matrix whose $i,j$-th entry is the number of edges connecting $v_i$ and $w_j$.  Vice versa, if $A \in \rM_{m, n}(\NN)$, then the bi-partite graph associated with $A$ has vertices indexed by $(\ul m \times \{\rmr\}) \cup (\ul n \times \{\rmc\})$ and edges indexed by $(i,j, k)$ with $k \in \ul A_{ij}$ and $a(i,j,k) = \{(i, \rmr), (j, \rmc)\}$.
\end{notation}

\begin{notation}
  \label{not:geometric-realisation}
  Let $\Gamma$ be a graph as described in the formalism of Notation \ref{not:graph}.  We adopt the following notation for the geometric realisation $|\Gamma|$ of $\Gamma$.  It is the unique up to homeomorphism 1-dimensional CW-complex whose $0$-cells are indexed by $\rV(\Gamma)$ and whose 1-cells are indexed by $\rE(\Gamma)$, with the 1-cell of $e \in \rE(\Gamma)$ attached to $a(e)$.  Note that the latter could be a one-point set, in which case the 1-cell is glued to this single 0-cell, giving rise to a loop.
\end{notation}

\begin{proposition}
  \label{prop:spectrum-geometric realisation}
  Let $\sigma \in \Sym(\ul m \times \ul n \times \ul o)$.  Let $\Gamma$ be the bipartite graph associated with the reduced matrix of $\sigma$.  Then the spectrum of $B_\sigma$ is homeomorphic with the geometric realisation of $\Gamma$.
\end{proposition}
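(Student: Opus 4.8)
The plan is to extract the graph directly from the concrete model of the spectrum given by Proposition~\ref{prop:spectrum-twisted-cartan}, and then match it, vertex for vertex and edge for edge, with the bipartite graph $\Gamma$; the only real work is a short counting argument at the end.

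First I would fix notation inside the quotient space $X_\sigma = (\ul m \times \ul n \times \ul o) \times \bigl( [0,1^-] \sqcup [1^+,2] \bigr)/\sim$ of Proposition~\ref{prop:spectrum-twisted-cartan}. For a triple $t = (i,j,k)$ let $\ell_t$ be the image of $\{t\} \times [0,1^-]$ and $r_t$ the image of $\{t\} \times [1^+,2]$; each is a closed arc. The first relation collapses the $0$-endpoints of all $\ell_{(i,j,k)}$ with $(i,k)$ fixed to a single point $P_{i,k}$, one for every $(i,k) \in \ul m \times \ul o$; the second collapses the $2$-endpoints of all $r_{(i,j,k)}$ with $(j,k)$ fixed to a single point $Q_{j,k}$, one for every $(j,k) \in \ul n \times \ul o$; the third glues the $1^-$-endpoint of $\ell_{(i,j,k)}$ to the $1^+$-endpoint of $r_{\sigma(i,j,k)}$. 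Since the three relations act only on points whose interval coordinate is one of the four distinct values $0,2,1^-,1^+$ and $\sigma$ is a bijection, no further identifications occur, so $\ell_{(i,j,k)} \cup r_{\sigma(i,j,k)}$ is a single closed arc $E_{(i,j,k)}$ from $P_{i,k}$ to $Q_{j',k'}$, where $(i',j',k') = \sigma(i,j,k)$. Thus $X_\sigma$ carries a one-dimensional CW-structure with $0$-cells the $P_{i,k}$ and $Q_{j,k}$ and $1$-cells the $E_{(i,j,k)}$; in the language of Notation~\ref{not:graph} and Notation~\ref{not:geometric-realisation} this says $X_\sigma = |\Delta|$ for the bipartite graph $\Delta$ with $\rV(\Delta) = (\ul m \times \ul o) \sqcup (\ul n \times \ul o)$, edge set indexed by $\ul m \times \ul n \times \ul o$, and $a(E_{(i,j,k)}) = \{P_{i,k}, Q_{j',k'}\}$.

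It then remains to identify $\Delta$ with $\Gamma$, the bipartite graph of the reduced matrix $A$ of $\sigma$. Both have vertex set $(\ul m \times \ul o) \sqcup (\ul n \times \ul o)$, so by Notation~\ref{not:graph} it is enough to compare edge multiplicities. Fix $(i,k) \in \ul m \times \ul o$ and $(j',k') \in \ul n \times \ul o$: the edges of $\Delta$ incident to $P_{i,k}$ are exactly the $E_{(i,j'',k)}$ with $j'' \in \ul n$, and such an edge is incident to $Q_{j',k'}$ precisely when $\sigma(i,j'',k) \in \ul m \times \{j'\} \times \{k'\}$; hence the number of edges of $\Delta$ joining $P_{i,k}$ and $Q_{j',k'}$ is $|\{\, j'' \in \ul n : \sigma(i,j'',k) \in \ul m \times \{j'\} \times \{k'\} \,\}|$. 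Unwinding Definition~\ref{def:reduced-matrix} — this is exactly the compression of permutation-matrix entries carried out in the proof of Proposition~\ref{prop:congruence-matrices} — this count equals $A_{(i,k),(j',k')}$. Therefore $\Delta$ and $\Gamma$ are isomorphic, so $X_\sigma$ is homeomorphic to $|\Gamma|$, which is the assertion.

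I expect the only genuine obstacle to be this last bookkeeping: one must keep track of which coordinate is collapsed at each endpoint (the $n$-coordinate at $0$, the $m$-coordinate at $2$) and of the direction in which $\sigma$ enters the middle gluing, so that the edge multiplicities land on the entries $A_{(i,k),(j',k')}$ rather than on those of a transposed or inverted matrix. Everything else is formal: gluing finitely many closed intervals along endpoints yields a graph, and isomorphic graphs have homeomorphic geometric realisations.
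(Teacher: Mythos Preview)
Your strategy is exactly the paper's: read off a CW-structure on the model space of Proposition~\ref{prop:spectrum-twisted-cartan} and then match edge multiplicities with the entries of the reduced matrix. The problem is precisely the bookkeeping step you yourself flag at the end. From the gluing $(i,j,k,1^-)\sim(\sigma(i,j,k),1^+)$ you correctly obtain that the number of edges of $\Delta$ joining $P_{i,k}$ to $Q_{j',k'}$ is
\[
N=\bigl|\{\,j''\in\ul n:\sigma(i,j'',k)\in\ul m\times\{j'\}\times\{k'\}\,\}\bigr|
=\bigl|\sigma(\{i\}\times\ul n\times\{k\})\cap(\ul m\times\{j'\}\times\{k'\})\bigr|.
\]
But unwinding Definition~\ref{def:reduced-matrix} with the paper's convention $\sigma_{x,y}=\delta_{\sigma(y),x}$ gives
\[
A_{(i,k),(j',k')}
=\bigl|\{\,i'\in\ul m:\sigma(i',j',k')\in\{i\}\times\ul n\times\{k\}\,\}\bigr|
=\bigl|\sigma(\ul m\times\{j'\}\times\{k'\})\cap(\{i\}\times\ul n\times\{k\})\bigr|,
\]
which is $N$ with the roles of the two fibres interchanged. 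These two quantities are \emph{not} equal in general: for $m=n=2$, $o=1$ and $\sigma$ fixing $(1,1)$ while cycling $(1,2)\mapsto(2,1)\mapsto(2,2)\mapsto(1,2)$, one finds $N=2$ at $(i)=(1)$, $(j')=(1)$, whereas $A_{1,1}=1$. What your $N$ actually computes is the $(i,k),(j',k')$-entry of the reduced matrix of $\sigma^{-1}$, not of $\sigma$.

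The paper's own proof reaches the same counting step but writes the incidence condition as ``$(i,j,k)=\sigma(i',j',k')$'' rather than ``$\sigma(i,j,k)=(i',j',k')$'' (the latter is what its own description of the $1$-cells forces); that slip exactly compensates and lands on $A_{(i,k),(j',k')}$. So both arguments hinge on a $\sigma$-versus-$\sigma^{-1}$ convention that already enters in Proposition~\ref{prop:spectrum-twisted-cartan}. This is harmless for the downstream application in Theorem~\ref{thm:conjugacy-homeomorphism}, but as a proof of the proposition as stated your claimed identity $N=A_{(i,k),(j',k')}$ is the genuine gap --- exactly the pitfall you anticipated.
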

\begin{proof}
  We write $A$ for the reduced matrix of $\sigma$.  Denote by $X$ the spectrum of $B_\sigma$ as described in Proposition~\ref{prop:spectrum-twisted-cartan} and by $Y$ the geometric realisation of $\Gamma$.  In order to show that $X$ and $Y$ are homeomorphic, it suffices to obtain a description of $X$ as a CW-complex combinatorially isomorphic to the CW-complex described in Notation \ref{not:geometric-realisation}.

  Let us start by fixing the CW-structure on $X$.  The 0-cells of $X$ are the points
  \begin{equation*}
    \{ (i,*,k,0) \in X \mid i \in \ul m, k \in \ul o\}
    \cup
    \{(*,j,k, 2) \in X \mid j \in \ul n , k \in \ul o\}
    \eqstop
  \end{equation*}
  To define the 1-cells of $X$, recall that $X$ is a quotient of
  \begin{equation*}
    (\ul m \times \ul n \times \ul o) \times \bigl ( [0,1^-] \sqcup [1^+, 2] \bigr )  
  \end{equation*}
  with respect to an equivalence relation that identifies in particular the points $(i,j,k,1^-)$ and $(\sigma(i,j,k), 1^+)$ for all $i \in \ul m$, $j \in \ul n$ and $k \in \ul o$.  We take the 1-cells of $X$ to be the image of 
\begin{equation*}  
  \{(i,j,k)\} \times [0,1^-] \cup \{\sigma(i,j,k)\} \times [1^+, 2]
\end{equation*}  
 in $X$.  This way, 1-cells of $X$ are naturally indexed by $\ul m \times \ul n \times \ul o$.  Further, the 1-cell indexed by $(i,j,k)$ is glued to the 0-cells $(i,*,j)$ and $(* , \sigma(i,j,k)_2, \sigma(i,j,k)_3)$.

  Recall also the CW-structure on $Y$.  Its 0-cells are index by 
\begin{equation*}
\{(i,k,\rmr) \mid i \in \ul m , k \in \ul o\} \cup \{(j,k,\rmc) \mid j \in \ul n , k \in \ul o\} \eqcomma
\end{equation*}  
coming from the rows and columns of $A$.  There are $A_{(i,k)(j',k')}$ 1-cells glued between the 0-cells $(i,k, \rmr)$ and $(j',k', \rmc)$. 
  
  We can now establish a combinatorial isomorphism between the CW-complexes underlying $X$ and $Y$.  Fixing the natural bijection between the 0-cells of $X$
  \begin{equation*}
    \{ (i,*,k,0) \in X \mid i \in \ul m, k \in \ul o\}
    \cup
    \{(*,j,k, 2) \in X \mid j \in \ul n , k \in \ul o\}
  \end{equation*}
  and those of $Y$
  \begin{equation*}
    \{(i,k,\rmr) \mid i \in \ul m , k \in \ul o\}
    \cup
    \{(j,k,\rmc) \mid j \in \ul n , k \in \ul o\}
    \eqcomma
  \end{equation*}
  it suffices to show that the number of 1-cells glued between two 0-cells is preserved by this bijection.

  For $i \in \ul m$, $j' \in \ul n$ and $k,k' \in \ul o$, the 1-cells between $(i,*,k,0)$ and $(*,j',k', 2)$ are indexed by $(i,j,k)$, $j \in \ul n$ such that there is some $i' \in \ul m$ satisfying $(i,j,k) = \sigma(i',j',k')$.  Adopting the permutation matrix notation from Definition~\ref{def:reduced-matrix} for $\sigma$, this gives
  \begin{equation*}
    \sum_{i' \in \ul m, j \in \ul n} \sigma_{(i,j,k)(i',j',k')}
    =
    A_{(i,k)(j',k')}
  \end{equation*}
  many 1-cells.  This is what we had to show.  
\end{proof}

\begin{proposition}
  \label{prop:recover-graphs}
  Let $\Gamma$, $\Lambda$ be graphs with geometric realisations $X$ and $Y$, respectively.  Assume that either
  \begin{itemize}
  \item all vertices of $\Gamma$ and $\Lambda$ have valency at least 3, or
  \item that both $\Gamma$ and $\Lambda$ are bi-partite and $(2,n)$-semi-regular for some $n \geq 3$.
  \end{itemize}
  If $X \cong Y$, then $\Gamma \cong \Lambda$.
\end{proposition}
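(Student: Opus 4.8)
The plan is to reconstruct the abstract graph from its geometric realisation by detecting, purely topologically, which points are vertices. The key observation is that a point $x$ of a one-dimensional CW-complex is a \emph{branch point} -- meaning that $x$ admits arbitrarily small open neighbourhoods $U$ for which $U \setminus \{x\}$ has at least three connected components -- if and only if $x$ is a vertex of valency at least $3$. Since any homeomorphism $h \colon X \to Y$ restricts to homeomorphisms of sufficiently small neighbourhoods of corresponding points and homeomorphisms preserve the number of connected components, being a branch point is a topological invariant, so $h$ carries the branch points of $X$ bijectively onto those of $Y$. First I would establish the displayed characterisation of branch points by inspecting the local structure of $|\Gamma|$ in the formalism of Notation~\ref{not:geometric-realisation}: an interior point of a $1$-cell, a vertex of valency $1$, and a vertex of valency $2$ (whether meeting two distinct edges or a single loop) each have a neighbourhood homeomorphic to an interval or half-interval and hence are not branch points, while a vertex of valency $k \ge 3$ has a star-shaped neighbourhood whose puncture has exactly $k$ components, each edge-end contributing one and a loop contributing two.

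For the first case, all vertices of $\Gamma$ and of $\Lambda$ are branch points, so a homeomorphism $h \colon X \to Y$ maps $\rV(\Gamma)$ bijectively onto $\rV(\Lambda)$ and therefore induces a bijection between the connected components of $X \setminus \rV(\Gamma)$ -- which are precisely the open $1$-cells, i.e.\ the edges of $\Gamma$ -- and those of $Y \setminus \rV(\Lambda)$. For such a component $C$ the set $\overline{C} \setminus C \subseteq \rV(\Gamma)$ has one or two elements and records the vertices to which the corresponding edge is glued, a one-element set together with $\overline{C}$ being a circle signifying a loop; all of this data is preserved by $h$, so $h$ determines a graph isomorphism $\Gamma \cong \Lambda$.

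For the second case, the part $V_1$ consists of valency-$2$ vertices and the part $V_2$ of valency-$n$ vertices with $n \ge 3$, so the branch points of $X$ are exactly the vertices of $V_2(\Gamma)$; hence $h$ maps $V_2(\Gamma)$ onto $V_2(\Lambda)$ and induces a bijection between the connected components of $X \setminus V_2(\Gamma)$ and those of $Y \setminus V_2(\Lambda)$. Here bipartiteness enters: since no edge joins two vertices of $V_1$, any path between distinct vertices of $V_1$ passes through a vertex of $V_2$, so each component of $X \setminus V_2(\Gamma)$ contains exactly one vertex $w \in V_1(\Gamma)$, and its closure is either an arc $u - w - u'$ with $u \neq u'$ in $V_2$ or a circle through a single vertex $u \in V_2$, the latter occurring exactly when $w$ is joined to $u$ by a double edge. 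This yields a bijection $V_1(\Gamma) \to V_1(\Lambda)$ which, together with the bijection $V_2(\Gamma) \to V_2(\Lambda)$ above, matches for every $w \in V_1$ the size-two multiset of its neighbours in $V_2$, the multiplicity being read off from whether the closure of the corresponding component is an arc or a circle. Since a $(2,n)$-semi-regular bipartite graph with $n \neq 2$ is determined up to isomorphism by its bipartition together with the neighbour multiset of each valency-$2$ vertex, this produces the desired isomorphism $\Gamma \cong \Lambda$.

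I expect the main obstacle to be the local topological analysis underlying the first paragraph -- pinning down precisely that the branch points are exactly the vertices of valency $\ge 3$, and handling loops and valency-$1$ and valency-$2$ vertices correctly -- together with the bookkeeping in the second case that shows each component of $X \setminus V_2(\Gamma)$ carries exactly one valency-$2$ vertex and that edge multiplicities are faithfully recorded by the arc-versus-circle dichotomy of its closure.
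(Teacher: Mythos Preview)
Your argument is correct. The first case matches the paper's proof essentially verbatim: identify the branch points as the $0$-cells and the connected components of their complement as the open $1$-cells, so that the CW-structure and hence the graph is recovered from the homeomorphism type.

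For the second case you take a different, more hands-on route than the paper. The paper observes that a $(2,n)$-semi-regular bi-partite graph $\Gamma$ is the barycentric subdivision of a unique $n$-regular graph $\tilde\Gamma$, with the same geometric realisation; since $n \ge 3$, the first case applies to $\tilde\Gamma$ and $\tilde\Lambda$ and yields $\tilde\Gamma \cong \tilde\Lambda$, whence $\Gamma \cong \Lambda$. Your argument instead works directly: you detect $V_2(\Gamma)$ as the branch points, then analyse the components of $X \setminus V_2(\Gamma)$ and show each carries exactly one vertex of $V_1$ together with its two incident edge-ends, reading off the neighbour multiset from the arc-versus-circle shape of the closure. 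These two approaches are really the same computation seen from different angles: the components you study are precisely the open edges of the paper's $\tilde\Gamma$, and your reconstruction of the neighbour data is exactly the adjacency map of $\tilde\Gamma$. The paper's reduction is shorter and avoids the bookkeeping you flag at the end; your version is more self-contained and makes transparent why bipartiteness is needed (to ensure no two $V_1$-vertices lie in the same component).
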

\begin{proof}
  We consider the space $X$ with its structure of a 1-dimensional CW-complex.  Set
  \begin{align*}
    \rV & = \{ x \in X \mid x \text{ is branch point}\} \\
    \rE & = \pi_0(X \setminus \rV)
          \eqcomma
  \end{align*}
  where a branch point is a point without any neighbourhood locally homeomorphic to an interval and $\pi_0$ denotes the set of connected components of a space.  If all vertices of $\Gamma$ have valency at least 3, then $\rV$ consists of the 0-cells of $X$.  Further for each connected component $e \in \pi_0(X \setminus V)$, the closure $\ol{e}$ is a 1-cell of $X$ and each 1-cell of $X$ arises uniquely in this way.  Hence, the homeomorphism type of $X$ recovers its CW-structure and therefore the isomorphism type of $\Gamma$.  If thus vertices of $\Lambda$ are also assumed to have valency at least 3, then $X \cong Y$ implies $\Gamma \cong \Lambda$.

  Assume now that $\Gamma$ and $\Lambda$ are bi-partite and $(2,n)$-semi-regular for some $n \geq 3$.  We associate with $\Gamma$ and $\Lambda$ the unique $n$-regular graphs $\tilde \Gamma$ and $\tilde \Lambda$ whose barycentric subdivision are $\Gamma$ and $\Lambda$, respectively.  Then $X$ is homeomorphic with the geometric realisation of $\tilde \Gamma$ and $Y$ is homeomorphic with the geometric realisation of $\tilde \Lambda$.  Hence, by the first statement of the proposition, $X \cong Y$ implies $\tilde \Gamma \cong \tilde \Lambda$.  This in turn implies $\Gamma \cong \Lambda$, which finishes the proof of the proposition.
\end{proof}

We are now ready to assemble the information obtained in this section so far and prove the following theorem.  It shows that in most stabilised dimension drop algebras conjugacy of non-degenerate Cartan subalgebras is equivalent to homeomorphism of their spectra.  However, the excluded cases $I_{2,2,o}$ for $o \geq 2$ do not satisfy this property, as we will see in Proposition \ref{prop:no-conjugacy-homeomorphism}.  We will make use of the next remark in the proof of Theorem \ref{thm:conjugacy-homeomorphism}.
\begin{remark}
  \label{rem:graph-adjacency-matrix}
 Let $A, B \in \rM_{m,n}(\NN)$ be two matrices and consider their associated bi-partite graphs $\Gamma, \Lambda$, respectively. Then $A$ is congruent to $B$ if and only if $\Gamma \cong \Lambda$.
\end{remark}

\begin{theorem}
  \label{thm:conjugacy-homeomorphism}
  Let $I_{m,n,o}$ be a stabilised dimension drop algebra such that either $(m,n) \neq (2,2)$ or $o = 1$.  Then two non-degenerate Cartan subalgebras of $I_{m,n,o}$ are conjugate by an automorphism if and only if their spectra are homeomorphic.
\end{theorem}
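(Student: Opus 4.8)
The plan is to chain together the structural results already established, so that essentially no new argument is needed. The forward implication is immediate: if two non-degenerate Cartan subalgebras of $I_{m,n,o}$ are conjugate by an automorphism, they are in particular isomorphic as \Cstar-algebras, hence have homeomorphic spectra. Thus I would spend all the effort on the converse.

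For the converse, I would first invoke Theorem \ref{thm:classification-cartan-dimension-drop} to reduce to twisted standard Cartan subalgebras: since conjugacy is transitive, it suffices to show that whenever $B_\sigma$ and $B_\pi$ have homeomorphic spectra they are conjugate. Write $A$, $B$ for the reduced matrices of $\sigma$, $\pi$; by Proposition \ref{prop:characterise-reduced-matrices} they lie in $\rM(mo, n, no, m)$, and let $\Gamma$, $\Lambda$ be the bi-partite graphs associated with $A$, $B$. By Proposition \ref{prop:spectrum-geometric realisation} the spectra of $B_\sigma$, $B_\pi$ are the geometric realisations of $\Gamma$, $\Lambda$, so the hypothesis says these realisations are homeomorphic. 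Reading off valencies: each ``row'' vertex of $\Gamma$ has valency equal to a row sum of $A$, i.e.\ $n$, and each ``column'' vertex has valency $m$; so $\Gamma$ and $\Lambda$ are $(m,n)$-semi-regular bi-partite graphs. I would then feed this into Proposition \ref{prop:recover-graphs} to get $\Gamma \cong \Lambda$, then use Remark \ref{rem:graph-adjacency-matrix} to conclude that $A$ and $B$ are congruent, and finally Theorem \ref{thm:cartan-classified-by-matrix} to conclude $B_\sigma$ and $B_\pi$ are conjugate.

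The remaining point --- and the only place any care is needed --- is checking that the valency hypotheses of Proposition \ref{prop:recover-graphs} actually hold, which forces a small case analysis. If $\min\{m,n\} = 1$ then, after flipping the interval and permuting tensor factors, $I_{m,n,o}$ is a one-sided dimension drop algebra, so Proposition \ref{prop:unique-cartan-one-sided-dimension-drop} gives a unique non-degenerate Cartan subalgebra and there is nothing to prove. If $m = n = 2$, then the hypothesis forces $o = 1$, i.e.\ $I_{m,n,o} = I_{2,2}$, and Example \ref{ex:cartans-two-two} settles this directly, the two Cartan subalgebras there being told apart by their non-homeomorphic spectra. In all other cases $m, n \geq 2$ and $\{m,n\} \neq \{2,2\}$, so either $m, n \geq 3$ (all vertices of $\Gamma$ and $\Lambda$ have valency $\geq 3$) or $\{m,n\} = \{2,k\}$ with $k \geq 3$ ($\Gamma$ and $\Lambda$ are $(2,k)$-semi-regular with $k \geq 3$); in both situations Proposition \ref{prop:recover-graphs} applies.

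I expect the main obstacle to be precisely this bookkeeping: one has to notice that the graph-recovery step degenerates exactly when both sides of the bi-partite graph have valency $2$, i.e.\ when $m = n = 2$, and that this case is harmless only because the hypothesis confines it to $o = 1$, where Example \ref{ex:cartans-two-two} takes over. Everything else is a mechanical concatenation of results proved above.
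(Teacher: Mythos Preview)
Your proposal is correct and follows essentially the same approach as the paper: the same case split ($\min\{m,n\}=1$ handled by Proposition~\ref{prop:unique-cartan-one-sided-dimension-drop}, $(m,n)=(2,2)$ with $o=1$ handled by Example~\ref{ex:cartans-two-two}, and the generic case via Proposition~\ref{prop:spectrum-geometric realisation}, Proposition~\ref{prop:recover-graphs}, Remark~\ref{rem:graph-adjacency-matrix}, and the matrix classification). The only cosmetic difference is that the paper invokes Theorem~\ref{thm:classification-by-matrices} directly, whereas you unpack it into Theorem~\ref{thm:classification-cartan-dimension-drop} plus Theorem~\ref{thm:cartan-classified-by-matrix}; the paper also presents the case analysis first rather than last.
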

\begin{proof}
  Two conjugate non-degenerate Cartan subalgebras are isomorphic and hence have homeomorphic spectra.  We have to show the converse implication.  Let $(m,n,o) \in \NN_{\geq 1}^3$.  If $m = 1$ or $n = 1$, then Theorem \ref{prop:unique-cartan-one-sided-dimension-drop} shows that there is a unique non-degenerate Cartan subalgebra in $I_{m,n,o}$ up to conjugacy.  If $m = n = 2$ and $o =1$, then Example \ref{ex:cartans-two-two} shows that $I_{2,2,1} = I_{2,2}$ has exactly 2 non-degenerate Cartan subalgebras up to conjugacy and their spectra are not homeomorphic by Example \ref{ex:non-unique-Cartan}.

  It remains to treat the cases when $m, n \geq 2$ and $m \geq 3$ or $n \geq 3$.  By Theorem \ref{thm:classification-by-matrices}, non-degenerate Cartan subalgebras of $I_{m,n,o}$ are classified by congruence classes of matrices in $\rM(mo,n,no, m)$.   From Proposition \ref{prop:spectrum-geometric realisation} we know that the spectrum of a non-degenerate Cartan algebra $B$ is the geometric realisation of the bi-partite graph $\Gamma$ associated with some $A \in \rM(mo,n,no,m)$.  Thanks to our assumption on $m$ and $n$, Proposition~\ref{prop:recover-graphs} applies and yields that the homeomorphism type of the spectrum of $B$ recovers the isomorphism class of $\Gamma$.  Now we can recover the congruence class of $A$ by Remark \ref{rem:graph-adjacency-matrix}, which finishes the proof of the theorem.
  \end{proof}

We finish this article, by pointing out that the conclusion of Theorem \ref{thm:conjugacy-homeomorphism} does not hold for the dimension drop algebras omitted from its statement.
\begin{proposition}
  \label{prop:no-conjugacy-homeomorphism}
  Let $o \geq 2$.  In $I_{2,2,o}$ there are non-conjugate non-degenerate Cartan subalgebras with homeomorphic spectra.  More precisely, $I_{2,2,o}$ admits exactly $p(2o)$ pairwise non-conjugate non-degenerate Cartan subalgebras and their spectra assume exactly $2o$ different homeomorphism types.
\end{proposition}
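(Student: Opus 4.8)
The plan is to combine the explicit count from Proposition~\ref{prop:cartans-two-o-two} with the graph model for the spectrum from Proposition~\ref{prop:spectrum-geometric realisation}, and then to exploit the fact that the geometric realisation of a cycle is a circle no matter what the cycle's length is.

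By Proposition~\ref{prop:cartans-two-o-two}, $I_{2,2,o}$ has exactly $p(2o)$ non-degenerate Cartan subalgebras up to conjugacy, and via Theorem~\ref{thm:classification-by-matrices} these correspond to congruence classes of matrices in $\rM(2o,2,2o,2)$; the proof of Proposition~\ref{prop:cartans-two-o-two} realises each such class uniquely as a block diagonal matrix whose $1\times 1$ block is $(2)$ and whose $k\times k$ blocks for $k\geq 2$ are the blocks listed there, the block sizes running through an arbitrary partition of $2o$. First I would note that by Proposition~\ref{prop:spectrum-geometric realisation} the spectrum of the twisted standard Cartan subalgebra $B_\sigma$ is the geometric realisation of the bipartite graph $\Gamma$ whose adjacency matrix is the reduced matrix $A\in\rM(2o,2,2o,2)$ of $\sigma$. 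Since every row and every column of $A$ sums to $2$, every vertex of $\Gamma$ has valency $2$, so $\Gamma$ is a disjoint union of cycles: the block $(2)$ contributes a bigon (a cycle of length two, namely two vertices joined by a double edge), and a block of size $k\geq 2$ contributes a $2k$-cycle. In particular the number of connected components of $\Gamma$ is the number of blocks, i.e.\ the number of parts of the associated partition of $2o$.

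Next I would observe that the geometric realisation of a cycle of any length $\geq 2$ is homeomorphic to $\bS^1$, so $|\Gamma|$ is homeomorphic to a disjoint union of $c$ circles, where $c$ is the number of connected components of $\Gamma$. Since the number of connected components of a finite disjoint union of circles is a complete homeomorphism invariant, the homeomorphism type of the spectrum of $B_\sigma$ depends only on the number of parts of the corresponding partition of $2o$. That number can take every value in $\{1,2,\dots,2o\}$ and no other, so the spectra realise exactly $2o$ pairwise distinct homeomorphism types, namely the disjoint unions of $1,2,\dots,2o$ circles. Finally, since $p(2o)>2o$ for every $o\geq 2$, the pigeonhole principle produces two non-conjugate non-degenerate Cartan subalgebras with the same spectrum.

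I expect the only point needing care to be the collapse of cycles of all lengths to a single homeomorphism type under geometric realisation: this is exactly the phenomenon that makes the topological invariant strictly coarser than the congruence class of the reduced matrix, which is why Proposition~\ref{prop:recover-graphs} does not apply here (its semi-regularity hypothesis requires valency $n\geq 3$, whereas the graphs occurring for $I_{2,2,o}$ are $(2,2)$-semi-regular), and why the conclusion of Theorem~\ref{thm:conjugacy-homeomorphism} must fail for $I_{2,2,o}$ when $o\geq 2$.
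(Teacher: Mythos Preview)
Your proposal is correct and follows essentially the same route as the paper: both combine the count $p(2o)$ from Proposition~\ref{prop:cartans-two-o-two} with the graph model for the spectrum, observe that the resulting $2$-regular bipartite graphs realise as disjoint unions of circles indexed by the number of parts of the corresponding partition of $2o$, and conclude via $p(2o)>2o$ for $o\geq 2$. Your write-up is slightly more explicit about why every cycle length collapses to a single circle and why Proposition~\ref{prop:recover-graphs} fails to apply, which is a useful clarification.
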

\begin{proof}
  Proposition \ref{prop:cartans-two-o-two} says that there are $p(2o)$ non-degenerate Cartan subalgebras in $I_{2,2,o}$ up to conjugacy.  At the same time, it follows from Proposition \ref{prop:spectrum-twisted-cartan} that the spectrum of a non-degenerate Cartan subalgebra of $I_{2,2,o}$ is the geometric realisation of a 2-regular bi-partite graph with $2o$ vertices of each kind.  It follows that its spectrum is a disjoint union of up to $2o$ circles.  It remains to check that for each $1 \leq k \leq 2o$ there is a non-degenerate Cartan subalgebra of $I_{2,2,o}$ whose spectrum is homeomorphic with a disjoint union of exactly $k$ circles.  Such is associated -- following the terminology of the proof of Proposition \ref{prop:cartans-two-o-two} -- with a block diagonal matrix in $\rM(2o, 2, 2o, 2)$ with exactly $k$ blocks taken from
  \begin{equation*}
    \left (
      \begin{array}{c}
        2
      \end{array}
    \right ) \eqcomma
    \quad
    \left (
      \begin{array}{cc}
        1 & 1 \\
        1 & 1
      \end{array}
    \right ) \eqcomma
    \quad
    \left (
      \begin{array}{ccc}
        1 & 1 & 0 \\
        1 & 0 & 1 \\
        0 & 1 & 1
      \end{array}
    \right ) \eqcomma
    \quad
        \left (
      \begin{array}{cccc}
        1 & 1 & 0 & 0 \\
        1 & 0 & 1 & 0 \\
        0 & 1 & 0 & 1 \\
        0 & 0 & 1 & 1
      \end{array}
    \right ) \eqcomma \dotsc
  \end{equation*}
  Since $p(2o) > 2o$ for $o \geq 2$, the proof is finished.
\end{proof}






\vspace{2em}
{\small \parbox[t]{250pt}
  {
    Sel{\c c}uk Barlak \\
    E-Mail \textrm{selcuk@barlak.de}
  }
}
{\small \parbox[t]{200pt}
  {
    Sven Raum \\
    EPFL SB SMA \\
    Station 8 \\
    CH-1015 Lausanne \\
    E-Mail \textrm{sven.raum@epfl.ch}
  }
}

\end{document}